\documentclass[11pt]{article}
\usepackage[a4paper,margin=1in]{geometry}
\usepackage{amsmath,amsthm,amssymb,mathtools}
\usepackage{esint}
\usepackage{enumitem}
\usepackage{microtype}
\usepackage[colorlinks=true,linkcolor=blue,citecolor=blue,urlcolor=blue]{hyperref}
\usepackage{mathrsfs}
\numberwithin{equation}{section}
\allowdisplaybreaks[1]

\newtheorem{theorem}{Theorem}[section]
\newtheorem{proposition}[theorem]{Proposition}
\newtheorem{lemma}[theorem]{Lemma}

\theoremstyle{remark}
\newtheorem{remark}[theorem]{Remark}
\theoremstyle{definition}
\newtheorem{definition}[theorem]{Definition}

\newcommand{\D}{\mathbb D}
\newcommand{\Hh}{\mathcal H}
\newcommand{\A}{\mathcal A}

\newcommand{\T}{\mathscr T}
\newcommand{\Ssp}{\mathscr S}

\newcommand{\norm}[1]{\left\lVert #1\right\rVert}

\newcommand{\ip}[2]{\left\langle #1,#2\right\rangle}
\newcommand{\mueta}{\mu_\eta}

\title{The Bergman projection with operator weights}

\author{%
\textsc{Chunxu Xu}\textsuperscript{1}%
\thanks{E-mail: \href{mailto:1968385450@qq.com}{1968385450@qq.com}.}%
\thanks{Supported by the National Natural Science Foundation of China (Grant Nos.~12401154 and 12601234).}
\quad and \quad
\textsc{Jianxiang Dong}\textsuperscript{2}%
\thanks{E-mail: \href{mailto:jianxd@tsnu.edu.cn}{jianxd@tsnu.edu.cn}. Corresponding author.}%
\\[1.0ex]
\small\textsuperscript{1}School of Science, Nanjing Forestry University,\\[-0.15ex]
\small Nanjing 210037, P.\ R.\ China\\[0.45ex]
\small\textsuperscript{2}School of Mathematics and Statistics, Tianshui Normal University,\\[-0.15ex]
\small Tianshui 741000, P.\ R.\ China
}
\date{}

\makeatletter
\renewcommand{\@maketitle}{%
  \newpage
  \null
  \vskip -2.2em%
  \begin{center}%
    {\large\bfseries \@title\par}%
    \vskip 1.15em%
    {\normalsize \@author\par}%
  \end{center}%
  \vskip 0.9em%
}
\makeatother

\begin{document}
\maketitle

\begin{abstract}
We characterize boundedness of the Bergman projection on operator-weighted $L^p$ spaces over a separable Hilbert space, for $1<p<\infty$. Under the directional integrability conditions, the criterion is uniform boundedness of averages over Carleson squares in the unit disc. We obtain norm estimates independent of dimension. The proof uses a separated kernel expansion and a layer decomposition of the tent tree. No reverse H\"older assumption is needed. We also give a finer matrix estimate with a dimension-dependent constant. Boundedness is unchanged by compactly supported changes of the weight that preserve the integrability conditions.
\end{abstract}

\noindent\textit{2020 Mathematics Subject Classification.}
Primary 32A36, 42B20; Secondary 42B35, 47B38.

\noindent\textit{Keywords.}
Bergman projection, operator weights, matrix weights, weighted $L^p$ spaces,
B\'ekoll\'e--Bonami condition.

\section{Introduction}
Weighted Bergman projections lead to a basic question: which conditions on a weight make the projection bounded? For scalar weights, the B\'ekoll\'e--Bonami theorem answers this question by averages over Carleson squares \cite{Bekolle82,BB78}. Aleman and Constantin obtained an operator-weighted version on $L^2$ over a separable Hilbert space \cite{AlemanConstantin12}. Their proof uses duality of analytic spaces and weighted estimates for derivatives.

We ask whether local averages also characterize boundedness for every $1<p<\infty$, with constants independent of dimension. Two difficulties arise. Finite-dimensional norm comparisons can introduce constants that grow with dimension. Also, general Bergman weights need not satisfy reverse H\"older inequalities \cite{AlemanPottReguera19}. We prove the characterization for every separable Hilbert space by a direct estimate on the tent tree. This estimate uses the averaging condition alone.

Let $\D$ be the unit disc and let $dA$ be normalized area measure. For $\eta>-1$, set
\[
 d\mueta(z)=(\eta+1)(1-|z|^2)^\eta dA(z).
\]
The Bergman projection is
\begin{equation}\label{eq:bergman}
 P_\eta f(z)=\int_\D\frac{f(\zeta)}{(1-\overline\zeta z)^{\eta+2}}\,d\mueta(\zeta).
\end{equation}
The kernel is scalar. Its power is taken with the analytic branch equal to $1$ at $z=0$. We use the same formula for Hilbert-space-valued functions.

Fix $1<p<\infty$ and put $p'=p/(p-1)$. All Hilbert spaces below are complex. Let $\Hh$ be a nonzero separable Hilbert space. Its inner product is linear in the first variable. Vector-valued measurability means strong measurability. Let $\mathcal B(\Hh)$ denote the bounded operators on $\Hh$. A weight $W$ has positive boundedly invertible values in $\mathcal B(\Hh)$ almost everywhere. The inverse need not be bounded uniformly in $z$. For each $e\in\Hh$, the map $z\mapsto W(z)e$ is measurable. The powers used below have the same property; this is justified in Section~\ref{sec:prelim-scalar}. Set $U=W^{1/p}$. We assume
\begin{equation}\label{eq:standing}
 \int_\D\|Ue\|^p\,d\mueta<\infty,
 \qquad
 \int_\D\|U^{-1}e\|^{p'}\,d\mueta<\infty
 \qquad(e\in\Hh).
\end{equation}
Define
\begin{equation}\label{eq:LpW}
 \|f\|_{L^p(W)}^p=\int_\D\|U(z)f(z)\|^p\,d\mueta(z).
\end{equation}
The space $L^p(W)$ consists of measurable $\Hh$-valued functions with finite norm. Functions equal almost everywhere are identified. These assumptions remain in force in the paper.

Normalize arc length so that $|\partial\D|=1$. For an arc $I\subset\partial\D$ with $0<|I|\le1$, put
\[
 S(I)=\{re^{it}:e^{it}\in I,\ 1-|I|<r<1\}.
\]
We also call $S(I)$ a tent. Its top half is the part where $1-|I|<|z|\le1-|I|/2$. We include $S(\partial\D)=\D$, up to a null set. Write $\fint_E=\mueta(E)^{-1}\int_E$ for a measurable set of positive measure. Let $\mathbf1_E$ denote the indicator of $E$. For a Carleson square $S$, define
\[
 E_Sf=\mathbf1_S\fint_S f\,d\mueta,
 \qquad
 [W]_{\mathbf B_p^{\rm av}(\eta)}
 =\sup_S\|E_S\|_{L^p(W)\to L^p(W)}^p.
\]
The averages are weak vector averages. Section~\ref{sec:prelim-scalar} gives their definition and a dense domain for \eqref{eq:bergman}.

We write $A\lesssim_\gamma B$ if $A\le C_\gamma B$. We write $A\asymp_\gamma B$ if both inequalities hold. The subscript lists the permitted dependence of the constant. Our main result is the following.
\begin{theorem}[Operator-weight characterization]\label{thm:operator-main}
Let $1<p<\infty$, $\eta>-1$, and let $W$ satisfy the standing assumptions on a nonzero separable Hilbert space. Then
\[
 P_\eta:L^p(W)\to L^p(W)\text{ is bounded}
 \quad\Longleftrightarrow\quad
 B:=[W]_{\mathbf B_p^{\rm av}(\eta)}<\infty.
\]
More precisely,
\[
 c_\eta B^{1/p}\le\|P_\eta\|_{L^p(W)\to L^p(W)}
 \le C_{p,\eta} B^{\gamma_p},
 \qquad \gamma_p=1+\frac1p+\frac1{p-1}.
\]
The constants do not depend on dimension. No directional reverse H\"older condition is assumed.
\end{theorem}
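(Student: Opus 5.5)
The proof splits into the two inequalities, and the lower bound is the easy one.

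\emph{Lower bound.} The plan is to show that each averaging operator $E_S$ is dominated by $P_\eta$ up to a constant depending only on $\eta$. Fix a Carleson square $S=S(I)$ and let $z_S$ be the centre of its top half. If $f$ is supported in $S$, then for $z\in S$ and $\zeta\in S$ the kernel $(1-\overline\zeta z)^{-(\eta+2)}$ has modulus comparable to $\mueta(S)^{-1}$, and its argument varies by less than a small fixed angle once $S$ is replaced by a comparable subtent. Instead of pointwise kernel bounds, I would test the adjoint pairing: for $g\in L^{p'}(W^{-p'/p})$ supported in $S$, write
\[
\langle E_S f, g\rangle=\mueta(S)^{-1}\Big\langle \int_S f,\int_S g\Big\rangle .
\]
Multiply $f$ by the unimodular factor $(1-\overline\zeta z_S)^{\eta+2}/|1-\overline\zeta z_S|^{\eta+2}$, and similarly for $g$. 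The pairing $\langle P_\eta f,g\rangle$ then becomes $\mueta(S)^{-1}\langle\int f,\int g\rangle$ times a kernel with real part bounded below by $c_\eta$, plus an error. This does not give domination directly in the vector case, because the error term is an operator-valued cross term.

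The cleaner route is the standard one: average over rotations and dilations of $S$. Concretely, I would pick a small sub-square $S'$ near $S$'s top so that the kernel is nearly constant, namely $K(z,\zeta)=k_S(1+O(\epsilon))$, and then write $E_{S'}=k_S^{-1}\mathbf 1_{S'}P_\eta\mathbf1_{S'}-R$ with $\|R\|\le\epsilon\cdot(\text{Schur-type bound})$. This route also has problems in the weighted setting. So the fallback is to use only the scalar-averaged structure $E_Sf=\mathbf 1_S\langle\cdot\rangle$ together with a Taylor expansion of the kernel in separated variables $\sum_k a_k(z)b_k(\zeta)$, where each term is rank-one-like and the leading term dominates. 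This yields $\|E_S\|\le C_\eta\|P_\eta\|$, i.e. $B^{1/p}\le C\|P_\eta\|$.

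\emph{Upper bound.} Here I follow the abstract's hints.

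1. Decompose $\D$ via a dyadic tent tree $\mathcal D$ (two or three shifted dyadic systems). Use the separated kernel expansion: for $z,\zeta$ in the top half $T_J$ of a tent $J$ with $z,\zeta$ related through their minimal common tent $S$, write $(1-\overline\zeta z)^{-(\eta+2)}=\sum_{m} c_m\,\phi_{S,m}(z)\overline{\psi_{S,m}(\zeta)}\,\mueta(S)^{-1}$. The coefficients $c_m$ decay geometrically and the functions $\phi,\psi$ are bounded by $1$ on $S$. This reduces $P_\eta$ to a geometrically weighted sum of positive-type dyadic operators $\sum_S \mathbf 1_S\phi_{S,m}E_S(\psi_{S,m}\,\cdot)$, modulo local terms that are controlled by $E_S$ directly.

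2. Bound the dyadic operator $\sum_S\mathbf1_S E_S(\mathbf 1_{T\text{-layer}}\cdot)$. Group the tents into layers by the size of $\|U E_S U^{-1}\|$-type quantities, or better by a stopping-time construction using the averaging operator norms alone: stop when $\fint_S\|U(z)\langle U^{-1}\rangle\ldots\|$ doubles. Each layer is sparse, and on a layer the operator is bounded by $B^{1/p}$ times a sparse Carleson constant. Summing the layers costs the factors $B^{1/p}$ and $B^{1/(p-1)}$, which come from the duality $W\leftrightarrow W^{-p'/p}$, where $B$ controls the dual average norm up to a power. Collecting these gives the exponent $\gamma_p$. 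The only Hilbert-space tools used are the norm identity for weak averages and Minkowski's inequality, so the constants are independent of dimension.

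\emph{Main obstacle.} The main obstacle is the layer decomposition in step 2 without reverse H\"older. Sparse bounds for matrix weights usually use convex-body domination, whose constants depend on dimension. I would instead work directly with the operators $E_S$, whose norms are assumed bounded, and prove a Carleson embedding for the layers by a telescoping or stopping argument that uses only $\|E_S\|\le B^{1/p}$.
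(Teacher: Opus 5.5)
Both halves of your proposal stop short of the step that actually carries the proof.

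\emph{Lower bound.} None of your three routes reaches the Carleson square. The kernel $K_\eta(z,\zeta)=(1-\overline\zeta z)^{-\eta-2}$ is nearly constant only on boxes $S'$ of small hyperbolic size. So even if you handle the weighted remainder by a separated expansion, you only obtain $\|E_{S'}\|\lesssim\|P_\eta\|$ for such boxes. That is much weaker than the claim. For the scalar weights $(1-|z|^2)^\beta$, averages over such boxes are bounded for every real $\beta$, while the B\'ekoll\'e--Bonami condition holds only for $-(\eta+1)<\beta<(p-1)(\eta+1)$ (Proposition~\ref{prop:power}). On all of $S\times S$ there is no expansion whose leading term $\mueta(S)^{-1}$ dominates, because $\mueta(S)|K_\eta|$ is unbounded near the boundary diagonal.

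The missing idea is the exact identity of Aleman and Constantin. With $\varphi_\lambda(z)=(\lambda-z)/(1-\overline\lambda z)$,
\[
 k_\lambda^\eta(z)\overline{k_\lambda^\eta(\zeta)}
 =K_\eta(z,\zeta)\bigl(1-\varphi_\lambda(z)\overline{\varphi_\lambda(\zeta)}\bigr)^{\eta+2},
\]
and $(1-t)^{\eta+2}=\sum_nd_nt^n$ with $\sum_n|d_n|<\infty$. Hence the rank-one operator $P_{\eta,\lambda}f=k_\lambda^\eta\int_\D f\,\overline{k_\lambda^\eta}\,d\mueta$ equals $\sum_nd_nM_{\varphi_\lambda^n}P_\eta M_{\overline{\varphi_\lambda}^{\,n}}$. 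Every term, not only a leading one, is bounded by $\|P_\eta\|$. Take $\lambda_S=(1-|I|)e^{i\theta_I}$, where $e^{i\theta_I}$ is the midpoint of $I$. Then $|k^\eta_{\lambda_S}|\asymp_\eta\mueta(S)^{-1/2}$ on $S$, so $E_S=M_{\mathbf 1_S/m_S}P_{\eta,\lambda_S}M_{\mathbf 1_S/\overline{m_S}}$ with $m_S=\mueta(S)^{1/2}k^\eta_{\lambda_S}$ and bounded scalar multipliers.

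\emph{Upper bound.} Your reduction matches the paper's: shifted tent trees and a separated expansion whose coefficients are bounded, uniformly in the tent, by one summable sequence. This leaves tree operators $\sum_QM_{a_Q}E_QM_{b_Q}$ with $|a_Q|,|b_Q|\le1$. But the tree estimate is the heart of the theorem, and your step~2 does not supply it. A stopping time on operator-valued quantities such as $\fint_S\|U\langle U^{-1}\rangle\cdots\|$ is not defined. Stopping of this kind is exactly where dimensional constants or reverse H\"older inequalities usually enter. Your exponent $\gamma_p$ is also asserted rather than derived.

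The paper uses no stopping time. Put $A=B^{1/p}$ and let $\sigma=\sigma_\eta$ be the lower bound for the relative measure of the top halves $G_Q$. Testing $\|E_Q\|\le A$ on $\mathbf 1_{G_Q}e$, for a fixed vector $e$, gives
\[
 \int_{Q\setminus G_Q}\|Ue\|^p\,d\mueta\le\rho_p\int_Q\|Ue\|^p\,d\mueta,
 \qquad\rho_p=1-(\sigma/A)^p.
\]
Iterating over generations yields $\int_{D_j(Q)}\|Ue\|^p\,d\mueta\le\rho_p^j\int_Q\|Ue\|^p\,d\mueta$. The same holds for $U^{-1}$ with $p'$, because $E_Q$ is self-dual under the unweighted pairing. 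These decay bounds apply because $E_Qf$ is a constant vector on $Q$.

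Now split $Q$ into layers $F_j(Q)$, the union of the top halves $j$ generations below $Q$. For fixed $j$ these layers are pairwise disjoint as $Q$ varies. One gets
\[
 \|M_{F_j(Q)}M_{a_Q}E_QM_{b_Q}M_{F_k(Q)}f\|\le A\rho_p^{j/p}\rho_{p'}^{k/p'}\|M_{F_k(Q)}f\|
\]
in $L^p(W)$. Sum over $Q$ using disjointness for fixed $j$ and for fixed $k$, then sum over $j,k$. The result is
\[
 \frac{A}{(1-\rho_p^{1/p})(1-\rho_{p'}^{1/p'})}\lesssim_{p,\eta}A^{1+p+p'}=B^{\gamma_p}.
\]
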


The main point is the estimate for general $p$ in infinite dimension. The theorem requires no integrability of the operator norms of $W$ and $W^{-1}$. Only the directional conditions in \eqref{eq:standing} are imposed. It also applies to noncommuting weights that cannot be compared with a single scalar weight; see Proposition~\ref{prop:noncommuting-example}.

The proof of necessity is the scalar-multiplier argument of \cite[Proposition~2.1]{AlemanConstantin12}. For sufficiency, we first write the kernel as a summable series of separated scalar factors on dyadic tents. We then split each tent into layers of top halves. The averaging bound gives geometric decay on these layers for both the weight and its dual. Fixed layers are disjoint. Summing the resulting estimates proves the upper bound. The expansion retains the complex kernel. Its coefficients have a summable bound that is uniform in the tent. This uniform bound allows us to pass from the tree estimate to the Bergman projection.

At $p=2$, almost orthogonality gives a smaller exponent.
\begin{theorem}\label{thm:hilbert-bound}
Let $p=2$ and let $W$ satisfy the standing assumptions on a separable Hilbert space. Put $B=[W]_{\mathbf B_2^{\rm av}(\eta)}$. If $B<\infty$, then
\begin{equation}\label{eq:hilbert-upper}
 \|P_\eta\|_{L^2(W)\to L^2(W)}\le C_\eta B^{3/2}.
\end{equation}
The constant is independent of the Hilbert-space dimension.
\end{theorem}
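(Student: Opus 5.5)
We will prove the $L^2$ bound by conjugating the projection by the weight and estimating a sum of pieces through almost orthogonality (Cotlar--Stein type), rather than summing norms as in the general-$p$ argument. With $U=W^{1/2}$, boundedness of $P_\eta$ on $L^2(W)$ is equivalent to boundedness of $T=UP_\eta U^{-1}$ on the unweighted space $L^2(\mueta;\Hh)$, with the same norm. We take $f$ in the dense domain from Section~\ref{sec:prelim-scalar}. As in the sufficiency part of Theorem~\ref{thm:operator-main}, we expand the kernel $(1-\overline\zeta z)^{-\eta-2}$ as a summable series of separated scalar factors on dyadic tents. Then we split each tent into layers of top halves. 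This writes $T$ as a sum over layer index $k\ge0$ and expansion index $j$ of operators
\[
 T_{j,k}f=\sum_{S}c_{j,S}\,U\,\mathbf1_{L_k(S)}\,a_{j,S}\fint_S b_{j,S}\,U^{-1}f\,d\mueta .
\]
Here $L_k(S)$ is the $k$-th layer of the tent $S$, and $a_{j,S},b_{j,S}$ are the bounded scalar factors. The coefficients satisfy $\sup_S|c_{j,S}|\le \kappa_j$ with $\sum_j\kappa_j<\infty$, uniformly in the tent. We also allow a symmetric splitting where the layer sits on the $\zeta$ side.

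The first key step is the single-tent estimate. For fixed $S$, the rank-one-in-$S$ operator $U\mathbf1_{S}\fint_S U^{-1}\cdot$ has norm $\|E_S\|^{1/2}_{L^2(W)\to L^2(W)}\cdot$(up to square roots), so it is at most $B^{1/2}$. The averaging bound gives the geometric decay already used in Theorem~\ref{thm:operator-main}. It controls the mass of $\|U e\|^2$ on the $k$-th layer of $S$ relative to all of $S$ by a factor $(1-cB^{-1})^{k}$, and the same holds for the dual. From this we obtain $\|T_{j,k}^{S}\|\lesssim \kappa_j B^{1/2}\,\theta^k$ with $\theta=1-c/B$.

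The second step is almost orthogonality. For fixed $j,k$ the layers $L_k(S)$ are disjoint, so the ranges of the pieces are orthogonal. For the domains, tents are nested, and this replaces the crude triangle summation by a Cotlar--Stein estimate on $T_{j,k}^*T_{j,k}$. We bound the adjoint products using disjointness on the output side. On the input side we use a Carleson-embedding argument for the averages $\fint_S U^{-1}f$. That embedding is where the dual averaging condition enters, and it costs another factor $B^{1/2}$. This yields $\|T_{j,k}\|\lesssim\kappa_j B\,\theta^{k}$.

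The final step sums over $k$ and $j$. The geometric series gives $\sum_k\theta^k\asymp B$, and combining this naively would produce $B^2$, so the bookkeeping must be sharper. The last $B^{1/2}$ should be saved by doing the Carleson embedding jointly over all layers rather than layer by layer, so that the decay factor is paid only once. I expect this to be the main obstacle: making the operator-valued Carleson embedding work with constant $B$ and no dimension dependence, without a reverse H\"older inequality. I would first try to prove it by a stopping-time argument on the tent tree driven only by $\|E_S\|\le B$, testing against vectors $e\in\Hh$ so that only the directional integrability in \eqref{eq:standing} is used. If the joint embedding yields $B^{1/2}$ times the decay sum $B$, the total is $C_\eta B^{3/2}$, which proves \eqref{eq:hilbert-upper}.
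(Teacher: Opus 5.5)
Your proposal does not reach the exponent $3/2$, because the step meant to produce it is only a hope.

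\textbf{Where the layer scheme stops.} By your own count it gives $\|T_{j,k}\|\lesssim\kappa_jB\,\theta^{k/2}$. Two corrections to your bookkeeping: the decay controls $\int\|Ue\|^2\,d\mueta$, so the norm gains only $\theta^{k/2}$; and the single-tent factor is $\|E_S\|\le B^{1/2}$, not $\|E_S\|^{1/2}$. Summing $\sum_k\theta^{k/2}\approx B$ with the triangle inequality then yields $B^2$.

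The proposed fix, a ``joint Carleson embedding over all layers'', has no content as stated. Summing the $T_{j,k}$ over $k$ just reassembles the tree operator, whose pieces have nested supports. That discards exactly the orthogonality of ranges the layers were introduced to supply. For fixed $k$ the ranges are exactly orthogonal, so $T_{j,k}^*T_{j,k}=\sum_S(T^S_{j,k})^*T^S_{j,k}$ and Cotlar--Stein plays no role there.

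Everything therefore rests on the embedding $\sum_S\int_S\|Ue_S\|^2\,d\mueta\le C\|F\|^2$. You do not prove it dimension-free, and operator-valued Carleson embeddings are precisely where dimension dependence tends to enter. Nor can $C$ be better than order $B^2$. Take the scalar weight $w=(1-|z|^2)^\beta$ with $\beta\uparrow\eta+1$, put $v=w^{-1}$, and test on $f=v\mathbf1_P$. The left side is at least $[w]_{B_2(\eta)}\sum_{S\subset P}v(S)\gtrsim_\eta B^2 v(P)=B^2\|f\|_{L^2(w)}^2$, since $[w]_{B_2(\eta)}=B$ and the descendant sum of $v$ is $\gtrsim_\eta B\,v(P)$. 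So, as designed, the scheme stops at $B^2$.

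\textbf{The missing idea: orthogonality across generations.} The paper gets $B^{3/2}$ by using almost orthogonality across \emph{generations} of the tree rather than layers. Set $L_n=\sum_{Q\in\T_n}UM_{a_Q}E_QM_{b_Q}U^{-1}$. Tents in one generation are disjoint, so $\|L_n\|\le B^{1/2}$ with no embedding at all.

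For $n>m$, on each $R\in\T_m$ one has $L_mF=a_RUe_R$ with a \emph{fixed} vector $e_R$. Hence the directional decay applies: $\int_{D_{n-m}(R)}\|Ue\|^2\,d\mueta\le\rho^{n-m}\int_R\|Ue\|^2\,d\mueta$, with $\rho=1-\sigma^2/B$ and $\sigma=1-\kappa_\eta$. This gives $\|L_n^*L_m\|\le B\rho^{(n-m)/2}$.

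The adjoint formula $L_Q^*=J_-M_{\overline{b_Q}}E_QM_{\overline{a_Q}}J_-^{-1}$ shows that $L_n^*$ has the same structure for $W^{-1}$. That weight has the same averaging characteristic, so $\|L_nL_m^*\|\le B\rho^{|n-m|/2}$ as well.

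Cotlar--Stein takes square roots of these cross-term bounds: $\|\sum_nL_n\|\le2\sqrt B\sum_k\rho^{|k|/4}\lesssim\sigma^{-2}B^{3/2}$. The factor $B^{1/2}$ comes from one generation and $B$ from the almost-orthogonal sum. Only the fixed-vector directional condition is used, with no stopping time and no Carleson embedding.

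Incidentally, the same cross-term bounds applied to $\sum_nL_n^*L_n$ are the natural dimension-free proof of your embedding, with constant $B^2$. At that point the layers are superfluous.
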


Here the local averaging identity is
\begin{equation}\label{eq:B2op-intro}
 \|E_S\|=
 \left\|\left(\fint_SW\,d\mueta\right)^{1/2}
             \left(\fint_SW^{-1}\,d\mueta\right)^{1/2}\right\|.
\end{equation}
Thus $B$ is the square of the norm-type characteristic. Theorem~\ref{thm:hilbert-bound} combines the separated kernel with the sparse method of Limani and Pott \cite{LimaniPott21}. Their sparse estimate is independent of dimension. Their Bergman application, \cite[Theorem~2.3]{LimaniPott21}, is stated for matrix weights. The separated expansion allows us to apply the sparse estimate on any separable Hilbert space. A precise comparison with the quantitative bound in \cite{AlemanConstantin12} is given in Remark~\ref{rem:AC-comparison}. We make no claim of sharpness for either upper bound.

For finite-dimensional weights, one can use a smaller power for general $p$. Define
\[
 [W]_{\mathbf B_p^{\rm R}(\eta)}
 =\sup_S\fint_S
 \left(\fint_S\|W(z)^{1/p}W(\zeta)^{-1/p}\|^{p'}\,d\mueta(\zeta)\right)^{p/p'}d\mueta(z).
\]
The norm inside the integral is the operator norm on $\mathbb C^d$.
\begin{theorem}\label{thm:matrix-main}
Let $\eta>-1$, $1<p<\infty$, and $d<\infty$.  Let $W:\D\to\mathbb C^{d\times d}$ be positive definite almost everywhere and satisfy the standing assumptions stated above.  Then the following are equivalent:
\begin{enumerate}[label=(\roman*)]
\item $P_\eta$ extends boundedly on $L^p(W)$;
\item $[W]_{\mathbf B_p^{\rm av}(\eta)}<\infty$;
\item $[W]_{\mathbf B_p^{\rm R}(\eta)}<\infty$.
\end{enumerate}
Moreover,
\begin{equation}\label{eq:main-lower}
 [W]_{\mathbf B_p^{\rm av}(\eta)}^{1/p}
 \lesssim_\eta\|P_\eta\|_{L^p(W)\to L^p(W)},
\end{equation}
and
\begin{equation}\label{eq:main-upper}
 \|P_\eta\|_{L^p(W)\to L^p(W)}
 \lesssim_{p,d,\eta}
 [W]_{\mathbf B_p^{\rm R}(\eta)}^{\theta_p},
 \qquad
 \theta_p=1+\frac1{p-1}-\frac1p.
\end{equation}
The two characteristics are comparable with constants depending on $p,d$.
\end{theorem}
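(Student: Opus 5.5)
The plan is to prove the cycle (i)$\Rightarrow$(ii)$\Rightarrow$(iii)$\Rightarrow$(i) and keep track of constants along the way.

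\emph{(i)$\Rightarrow$(ii) and \eqref{eq:main-lower}.} This is the finite-dimensional case of the lower bound in Theorem~\ref{thm:operator-main}. I would repeat the scalar-multiplier argument of \cite[Proposition~2.1]{AlemanConstantin12}. Fix a tent $S$. For $f$ supported in $S$, take a tent $\tilde S$ of comparable size, at distance comparable to its size from $S$. For $z\in\tilde S$ and $\zeta\in S$, the kernel $(1-\overline\zeta z)^{-\eta-2}$ is close to a fixed complex constant of modulus $\asymp|I|^{-\eta-2}$, up to a small relative error. Hence $\mathbf1_{\tilde S}P_\eta(\mathbf1_S f)$ controls $\mathbf1_{\tilde S}\fint_S f$ after the error is absorbed. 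Do the same argument symmetrically with the roles of $S$ and $\tilde S$ exchanged, or use the duality $L^p(W)^*=L^{p'}(W^{-p'/p})$. This bounds $\|E_S\|$ by $C_\eta\|P_\eta\|^2$. To get the linear bound, I would instead test against the pairing $\langle E_S f,g\rangle$ through $P_\eta$ using analytic multipliers $\phi$ adapted to $S$, as in Aleman--Constantin. No dimension enters here.

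\emph{(ii)$\Rightarrow$(iii).} Here one needs finite dimension. I would use reducing operators. For each $S$ choose positive matrices $\mathcal W_S$ and $\mathcal W'_S$ with
\[
|\mathcal W_S e|\asymp_d\Big(\fint_S\|W^{1/p}e\|^p\Big)^{1/p},
\qquad
|\mathcal W'_S e|\asymp_d\Big(\fint_S\|W^{-1/p}e\|^{p'}\Big)^{1/p'}.
\]
These exist by John ellipsoid, with constants $d^{1/2}$. Then $\|E_S\|\asymp_d\|\mathcal W_S\mathcal W'_S\|$. To see this, test $E_S$ on $f=\mathbf1_S W^{-p'/p}(\cdot)$-type extremizers in each direction. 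This is the standard computation from the Euclidean matrix $A_p$ theory. On the other side, $[W]_{\mathbf B_p^{\rm R}}$ restricted to $S$ is comparable to $\fint_S\|W(z)^{1/p}\mathcal W'_S\|^p\,d\mu_\eta(z)\asymp_d\|\mathcal W_S\mathcal W'_S\|^p$. For this, expand the operator norm in an orthonormal basis, which costs a factor depending on $d$. This gives comparability of the two characteristics, and hence (iii)$\Leftrightarrow$(ii).

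\emph{(iii)$\Rightarrow$(i) with \eqref{eq:main-upper}.} This is the main step. I would use the reduction already used for Theorem~\ref{thm:operator-main}. First, replace $P_\eta$ by the positive operator $P^+_\eta$ with kernel $|1-\overline\zeta z|^{-\eta-2}$. Alternatively, use the separated kernel expansion over dyadic tents with summable coefficients. Either way, the problem reduces to bounding the dyadic tent operator $T f=\sum_{S}\mathbf1_{S}\fint_S f\,d\mu_\eta$ over finitely many shifted dyadic systems. Write $g=W^{1/p}f$. Then the task is to bound
\[
\sum_S\int_S\Big|W(z)^{1/p}\fint_S W^{-1/p}g\Big|^p\,d\mu_\eta(z).
\]
Next, a Bergman tree is automatically sparse: the top halves of the tents are pairwise disjoint and carry a fixed fraction of $\mu_\eta(S)$. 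So I would run the matrix-weighted convex-body or sparse domination argument in the form of Nazarov--Petermichl--Treil--Volberg and Cruz-Uribe--Isralowitz--Moen. Dominate $|W(z)^{1/p}\fint_S W^{-1/p}g|$ by $\|W(z)^{1/p}\mathcal W'_S\|\cdot\fint_S|\mathcal W_S'^{-1}W^{-1/p}g|$. Then apply a Carleson embedding or a weighted maximal function bound to the second factor and the $\mathbf B_p^{\rm R}$ condition to the first. Counting the layers of top halves gives the exponent $\theta_p=1+\frac1{p-1}-\frac1p$: this exponent matches the maximal function exponent $\frac1{p-1}$ plus the sparse-sum exponent $1-\frac1p$.

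I expect the main obstacle to be the maximal-function estimate for $\sup_S\fint_S|\mathcal W_S'^{-1}W^{-1/p}g|$ in the Bergman geometry. No reverse H\"older inequality is available, so the Goldberg-style argument must be replaced. I would first try a layer decomposition: split $S$ into top halves of subtents. On each layer the $\mathbf B_p^{\rm R}$ condition gives geometric decay, which substitutes for reverse H\"older. Fixed layers are disjoint, so summing these decays yields the $\frac1{p-1}$ power.
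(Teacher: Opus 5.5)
Your steps (i)$\Rightarrow$(ii) and (ii)$\Leftrightarrow$(iii) are essentially the paper's (Propositions~\ref{prop:matrix-necessity} and~\ref{prop:equiv-char}). For the linear lower bound, drop the distant-tent argument, which loses a square. Use instead the exact identities $P_{\eta,\lambda}=\sum_n d_nM_{\varphi_\lambda^n}P_\eta M_{\overline{\varphi_\lambda}^{\,n}}$ with $\sum_n|d_n|<\infty$, and $E_S=M_{\mathbf 1_S/m_S}P_{\eta,\lambda_S}M_{\mathbf 1_S/\overline{m_S}}$.

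The gap is in (iii)$\Rightarrow$(i). First, the reduction must be made on the bilinear form, $|\langle P_\eta f,g\rangle|\lesssim_\eta\sum_j\A_{\T(\mathcal G_j)}(f,g)$, with $|\langle f(\zeta),g(z)\rangle|$ inside the double average (Lemma~\ref{lem:dyadic-kernel}, Proposition~\ref{prop:convex-bergman}). You may also reach it through the separated expansion followed by this domination. Replacing $P_\eta$ by $P_\eta^+$ is not a reduction: for vector-valued $f$ there is no pointwise domination of $|W^{1/p}P_\eta f|$ by $|W^{1/p}P_\eta^+f|$. Second, the quantity you display is $\sum_S\int_S|W^{1/p}\fint_SW^{-1/p}g|^p\,d\mu_\eta=\sum_S\|E_S(W^{-1/p}g)\|_{L^p(W)}^p$. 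This is an $\ell^p$-sum of pieces and does not bound $\|\sum_SE_S(W^{-1/p}g)\|_{L^p(W)}^p$, because a point lies in unboundedly many tents. In the correct structure (Theorem~\ref{thm:tree-matrix}), the reducing matrices give $\|\mathcal W_S\mathcal W'_S\|\lesssim_{p,d}B^{1/p}$. H\"older's inequality over $S$ then leaves \emph{two} embeddings with tent-dependent weights: $\|(\mathcal W'_S)^{-1}W^{-1/p}\|$ on $L^p$ and $\|\mathcal W_S^{-1}W^{1/p}\|$ on $L^{p'}$. Your plan has only the first.

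The exponent also does not come out as claimed. Take $A,\sigma,\rho_q$ as in Lemma~\ref{lem:layer-tree}, so $A^p\lesssim_{p,d}B$. Suppose the layer decomposition is used as you describe: H\"older's inequality on each layer $F_k(S)$, then disjointness of a fixed layer across tents. Then it bounds the first embedding by $(1-\rho_{p'}^{1/p'})^{-1}\le p'(A/\sigma)^{p'}\lesssim B^{1/(p-1)}$ and the second by $p(A/\sigma)^p\lesssim B$. Together with $B^{1/p}$ this gives $B^{\gamma_p}$, the dimension-free exponent of Theorem~\ref{thm:operator-main}, not $\theta_p$. Your count $\frac1{p-1}+(1-\frac1p)$ does not match the factors of your own scheme.

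The missing ingredient is a Carleson embedding with tent-dependent weights on a nested tree (Lemma~\ref{lem:tree-variable-CE}, the tree case of \cite[Theorem~4.3]{BenitoEtAl26}). Put $w_Q=\|\mathcal W_Q^{-1}W^{1/p}\|^p\mathbf 1_Q$. The reducing matrices give $\langle w_Q\rangle_Q\asymp_{p,d}1$ and the compatibility $w_P\lesssim_{p,d}\langle w_P\rangle_Qw_Q$ on $Q\subset P$. A testing bound $\sum_{Q\in\Ssp(P)}w_P(Q)\le C_2\,w_P(P)$ then yields an embedding of norm $\lesssim C_2^{1/s}$ on $L^s$, not $C_2$. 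The paper gets $C_2\lesssim B$, and $C_2\lesssim B^{p'/p}$ on the dual side, from directional Fujii--Wilson constants and a Buckley-type maximal bound on the tree.

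Your geometric decay would do the same job. For $u_e=|W^{1/p}e|^p$ you have $\sum_{Q\in\T(P)}u_e(Q)=\sum_ku_e(D_k(P))\le(A/\sigma)^p\,u_e(P)$, and the analogue with $(A/\sigma)^{p'}$ holds for $|W^{-1/p}e|^{p'}$. So the layers belong in the testing condition, not in a direct summation. The power $\frac1{p-1}$ you expect is the size of the dual testing constant, and it enters only through its $p$th root. This gives $\frac1p+\frac1{p'}+\frac1p\cdot\frac{p'}{p}=\theta_p$.
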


The quantitative estimate in Theorem~\ref{thm:matrix-main} follows from the nonhomogeneous matrix sparse theory of Benito-de la Cigo\~na, Borges, D'Emilio, Pasquariello, and Wagner \cite{BenitoEtAl26}. We give its tent-tree proof in Appendix~\ref{sec:matrix-class}. Earlier matrix Bergman estimates at $p=2$ are due to Huo and Wick \cite{HuoWick24}. Their work and \cite{HuoWagnerWick21} also treat more general domains.

Matrix-weighted Fock projections for general $p$ are studied in \cite{ChenWangFock25}. Bergman operators in noncommutative $L^p$ spaces are considered in \cite{Tian26}. The latter spaces use a different norm from \eqref{eq:LpW}.

Hyt\"onen, Li, Yang, and Yuan prove an operator-weighted sparse estimate on Euclidean cubes \cite[Corollary~8.4]{HLYY26}. Their proof uses a reverse H\"older inequality obtained from the $A_p$ condition on all subcubes. Our condition is imposed on boundary tents. These two families of sets are different, so the Euclidean result does not apply directly. Each tent leaves a top half of positive relative measure. The averaging bound forces geometric decay of directional weight mass along the descendants. Our layer estimate uses this decay in place of a reverse H\"older inequality.

We also prove stability under changes of the weight on a compact subset. We use it to give a noncommuting example with no global reverse H\"older gain. Finally, we give finite-dimensional compressions that preserve the pointwise weighted norm. These results are Propositions~\ref{prop:compact-stability}--\ref{prop:finite-sections}. The main estimates are not claimed to be sharp. Improving their powers remains a separate question.

Section~\ref{sec:prelim-scalar} fixes the dual-space notation and compares the normalizations. Section~\ref{sec:necessity-matrix} proves necessity. Section~\ref{sec:separated} proves sufficiency. Section~\ref{sec:examples} gives examples, stability under changes on compact subsets, and exact finite-dimensional compressions. The appendix contains the finer matrix estimate.

\section{Weights and local averages}\label{sec:prelim-scalar}
We first justify the measurability used in the definitions. For a positive integer $m$, let
\[
 \Omega_m=\{z\in\D:m^{-1}I\le W(z)\le mI\}.
\]
The order means that the corresponding quadratic forms are ordered. These sets are measurable: it suffices to test the two inequalities on a countable dense subset of the unit sphere. They increase to a set of full measure. If simple functions $f_n$ converge pointwise to $f$, then $W(z)f_n(z)\to W(z)f(z)$, since each $W(z)$ is bounded. Thus multiplication by $W$ preserves measurability. In particular, polynomial powers of $W$ applied to a fixed vector are measurable. On $\Omega_m$, approximate $t^r$ uniformly by polynomials on $[m^{-1},m]$, for $r\in\mathbb R$. Functional calculus then shows that $W^re$ is measurable for every $e$. The same simple-function argument proves measurability of $W^rf$.

Under the pairing $\int_\D\langle f,g\rangle\,d\mueta$, the dual of $L^p(W)$ has norm
\begin{equation}\label{eq:dualweight}
 \norm{g}_{p',W^\#}^{p'}
 =\int_\D \norm{W(z)^{-1/p}g(z)}^{p'}\,d\mueta(z),
 \qquad p'=\frac p{p-1}.
\end{equation}
Here $W^\#=W^{-p'/p}$, so $(W^\#)^{1/p'}=W^{-1/p}$. At $p=2$, this gives $W^\#=W^{-1}$. To prove the duality statement, use the onto isometry $f\mapsto Uf$ and its inverse $F\mapsto U^{-1}F$. Ordinary Hilbert-space-valued $L^p$ duality represents each bounded linear functional as
\[
 f\longmapsto\int_\D\langle Uf,G\rangle\,d\mueta,
 \qquad G\in L^{p'}(\mueta;\Hh).
\]
Put $g=UG$. Then $U^{-1}g=G$ and $\langle Uf,G\rangle=\langle f,g\rangle$. The norm of this functional is exactly the norm in \eqref{eq:dualweight}.

Fix a Carleson square $Q$. The maps
\[
 e\longmapsto \mathbf1_Q Ue\in L^p(Q;\Hh),
 \qquad
 e\longmapsto \mathbf1_Q U^{-1}e\in L^{p'}(Q;\Hh)
\]
are defined on all of $\Hh$ by \eqref{eq:standing}. They have closed graphs. For example, suppose $e_n\to e$ in $\Hh$ and $Ue_n\to F$ in $L^p(Q;\Hh)$. A subsequence converges almost everywhere to $F$. But boundedness of each $U(z)$ gives $U(z)e_n\to U(z)e$ pointwise. Thus $F=Ue$. The proof for $U^{-1}$ is the same. The closed graph theorem gives constants $C_Q,C_Q^*<\infty$ such that
\begin{equation}\label{eq:directional-uniform-local}
 \left(\fint_Q\|U(z)e\|^p\,d\mueta(z)\right)^{1/p}\le C_Q\|e\|,
 \qquad
 \left(\fint_Q\|U(z)^{-1}e\|^{p'}\,d\mueta(z)\right)^{1/p'}\le C_Q^*\|e\|.
\end{equation}
Every $f\in L^p(W)$ has a weak vector average on $Q$. Weighted H\"older's inequality gives
\[
 \left|\int_Q\langle f(z),h\rangle\,d\mueta(z)\right|
 \le\mueta(Q)^{1/p'}C_Q^*\|\mathbf1_Qf\|_{L^p(W)}\|h\|.
\]
Thus the integral defines a bounded conjugate-linear functional of $h$. It is represented by a unique vector of $\Hh$. Dividing this vector by $\mueta(Q)$ defines $\fint_Q f\,d\mueta$. This definition does not require $\int_Q\|f\|\,d\mueta<\infty$. The same argument applies to $bf$ for bounded scalar $b$.

If $v=\fint_Q f\,d\mueta$, the preceding estimate gives
\[
 \|v\|\le\mueta(Q)^{-1/p}C_Q^*\|\mathbf1_Qf\|_{L^p(W)}.
\]
The first inequality in \eqref{eq:directional-uniform-local} then gives
$\|E_Qf\|_{L^p(W)}\le C_QC_Q^*\|\mathbf1_Qf\|_{L^p(W)}$.
In particular, each average is bounded. Also $E_Q^2=E_Q$, and $E_Q$ fixes every function $\mathbf1_Qe$. Taking $e\ne0$ shows that $\|E_Q\|\ge1$.

Define the initial domain of the Bergman kernel by
\[
 \mathscr D_W
 :=\{U^{-1}F:\ F\text{ is a bounded, compactly supported, finite-valued simple }\Hh\text{-function}\}.
\]
Multiplication by $U$ maps $L^p(W)$ isometrically onto $L^p(\mueta;\Hh)$. Simple functions with compact support are dense there. Thus $\mathscr D_W$ is dense in $L^p(W)$. To check Bochner integrability, write $F=\sum_{j=1}^N\mathbf1_{E_j}e_j$ on disjoint measurable sets. H\"older's inequality gives
\[
 \int_\D\|U^{-1}F\|\,d\mueta
 \le\sum_{j=1}^N\mueta(E_j)^{1/p}
       \left(\int_{E_j}\|U^{-1}e_j\|^{p'}\,d\mueta\right)^{1/p'}<\infty.
\]
For fixed $z\in\D$, the scalar kernel is bounded in $\zeta$. Hence \eqref{eq:bergman} is a Bochner integral on $\mathscr D_W$. Boundedness of $P_\eta$ on $L^p(W)$ means a bounded extension of this map from $\mathscr D_W$. Such an extension is unique by density.

At $p=2$, the bounded positive form
\[
 (e,h)\longmapsto\fint_Q
 \langle W(z)^{1/2}e,W(z)^{1/2}h\rangle\,d\mueta(z)
\]
defines the weak average $\fint_QW\,d\mueta$. The same construction applies to $W^{-1}$. Both averages are invertible. Indeed, Cauchy--Schwarz gives
\[
 \|e\|^2=\fint_Q\langle W^{1/2}e,W^{-1/2}e\rangle\,d\mueta
 \le\left(\fint_Q\|W^{1/2}e\|^2\,d\mueta\right)^{1/2}C_Q^*\|e\|.
\]
Hence $\fint_QW\,d\mueta\ge(C_Q^*)^{-2}I$. Interchanging $W$ and $W^{-1}$ gives
$\fint_QW^{-1}\,d\mueta\ge C_Q^{-2}I$. In particular, the hypotheses of \cite[Theorem~1.1]{AlemanConstantin12} hold at $p=2$.

\begin{remark}\label{rem:AC-comparison}
Let $Q_2=[W]_{\mathbf B_2^{\rm av}(\eta)}^{1/2}$ on the tent basis used here. Let $Q_2^{\rm AC}$ denote the characteristic in \cite[Eq.~(1.1)]{AlemanConstantin12}. Their windows are
\[
 S_{\rm AC}(\theta,h)=\{re^{it}:1-h<r<1,\ |t-\theta|<h\},\qquad 0<h<1.
\]
Each lies in our square with height $h$ and the same center. The measure ratio is $\pi$. For a measurable set $E$ of positive measure, let $E_Ef=\mathbf1_E\fint_Ef\,d\mueta$. If $E\subset F$, then
\[
 E_E=\frac{\mueta(F)}{\mueta(E)}M_EE_FM_E,
 \qquad M_Ef=\mathbf1_Ef.
\]
Consequently $Q_2^{\rm AC}\le\pi Q_2$. Their fifth-power upper bound therefore gives $\|P_\eta\|\lesssim_\eta Q_2^5$ on our basis. Theorem~\ref{thm:hilbert-bound} gives $\|P_\eta\|\lesssim_\eta Q_2^3$. This compares two bounds in the same characteristic $Q_2$. It does not identify $Q_2^{\rm AC}$ with $Q_2$.
\end{remark}

For a positive scalar weight $w$, set
\[
 [w]_{B_p(\eta)}=\sup_S\left(\fint_Sw\,d\mueta\right)
 \left(\fint_Sw^{-1/(p-1)}\,d\mueta\right)^{p-1}.
\]
The scalar theorem \cite{Bekolle82,BB78,PottReguera13} states that $P_\eta$ is bounded on $L^p(w)$ if and only if this number is finite. The operator with the absolute kernel is
\[
 P_\eta^+f(z)=\int_\D\frac{f(\zeta)}{|1-\overline\zeta z|^{\eta+2}}\,d\mueta(\zeta).
\]
It is positive: it sends nonnegative scalar functions to nonnegative functions. It satisfies
\begin{equation}\label{eq:sharp-scalar}
 \|P_\eta^+\|_{L^p(w)\to L^p(w)}
 \le C_{p,\eta}[w]_{B_p(\eta)}^{\max\{1,1/(p-1)\}}.
\end{equation}

\begin{proposition}\label{thm:scalar-vector}
For any nonzero Hilbert space, $P_\eta$ is bounded on $L^p(w\,d\mueta;\Hh)$ if and only if $w\in B_p(\eta)$. The bound in \eqref{eq:sharp-scalar} also holds for this vector-valued operator.
\end{proposition}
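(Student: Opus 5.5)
Sufficiency reduces to the scalar case by a pointwise domination that does not see the Hilbert space. If $w\in B_p(\eta)$ and $f\in L^p(w\,d\mueta;\Hh)$, then for almost every $z$ the Bochner integral \eqref{eq:bergman} satisfies
\[
 \|P_\eta f(z)\|\le\int_\D\frac{\|f(\zeta)\|}{|1-\overline\zeta z|^{\eta+2}}\,d\mueta(\zeta)=P_\eta^+\big(\|f\|\big)(z).
\]
Here $\|f\|$ is a nonnegative scalar function in $L^p(w)$ with the same norm as $f$. So \eqref{eq:sharp-scalar} gives
\[
 \|P_\eta f\|_{L^p(w;\Hh)}\le\|P_\eta^+\|_{L^p(w)}\|f\|_{L^p(w;\Hh)}
 \le C_{p,\eta}[w]_{B_p(\eta)}^{\max\{1,1/(p-1)\}}\|f\|_{L^p(w;\Hh)},
\]
with no dependence on $\dim\Hh$.

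To make this rigorous, I will apply it first on the dense domain $\mathscr D_W$ with $W=wI$. The standing assumptions \eqref{eq:standing} are exactly local integrability of $w$ and of $w^{-p'/p}$, so $\mathscr D_W$ consists of functions on which \eqref{eq:bergman} converges absolutely. I then extend by density, as set up in Section~\ref{sec:prelim-scalar}. The triangle inequality for Bochner integrals justifies the pointwise bound.

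For necessity, assume $P_\eta$ is bounded on $L^p(w\,d\mueta;\Hh)$. Fix a unit vector $e\in\Hh$ and restrict to functions $f=ge$ with $g$ scalar. Then $P_\eta(ge)=(P_\eta g)e$ and $\|ge\|_{L^p(w;\Hh)}=\|g\|_{L^p(w)}$. Hence the scalar $P_\eta$ is bounded on $L^p(w)$ with norm at most the vector-valued norm. The scalar B\'ekoll\'e--Bonami theorem \cite{Bekolle82,BB78} then yields $w\in B_p(\eta)$.

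This direction is only as delicate as checking that the scalar domains match. Scalar boundedness is normally stated on a dense class such as bounded compactly supported functions, so I need $\mathscr D_{w}$ and that class to generate the same bounded extension. I expect this to follow from density in both directions together with the absolute convergence just noted.

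The only real obstacle I foresee is this bookkeeping: identifying the initial domains and confirming measurability of $\|f\|$. Both are handled by strong measurability and the density remarks already made in Section~\ref{sec:prelim-scalar}.
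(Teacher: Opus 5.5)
Your proposal is correct and is essentially the paper's proof. For sufficiency you use the pointwise domination $\|P_\eta f(z)\|\le P_\eta^+(\|f\|)(z)$ followed by \eqref{eq:sharp-scalar}, and for necessity you test on $f=ge$ with a fixed unit vector $e$ and invoke the scalar B\'ekoll\'e--Bonami theorem. The only difference is the dense class used for sufficiency. The paper proves the domination on bounded compactly supported simple functions and then uses the embedding $L^p(w\,d\mueta)\subset L^1(\mueta)$, which comes from integrability of $w^{-1/(p-1)}$, to identify the extension with the kernel integral on $\mathscr D_{wI}$. You work on $\mathscr D_{wI}$ directly, which makes that identification step unnecessary.
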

\begin{proof}
Suppose $w\in B_p(\eta)$. For a bounded, compactly supported simple function $f$, the triangle inequality gives
\[
 \|P_\eta f(z)\|\le P_\eta^+(\|f\|)(z).
\]
Taking the scalar weighted $L^p$ norm and applying \eqref{eq:sharp-scalar} proves the asserted estimate on this class. The class is dense in $L^p(w\,d\mueta;\Hh)$, so the operator extends boundedly. Since $w^{-1/(p-1)}$ is integrable, weighted H\"older's inequality shows that convergence in this space implies convergence in $L^1$. Thus the extension agrees with the kernel integral on $\mathscr D_{wI}$. For necessity, take $f=\varphi e$ with a fixed unit vector $e$. Both the function norm and the projection norm are then the corresponding scalar norms. The scalar theorem gives $w\in B_p(\eta)$.
\end{proof}

If $cwI\le W\le CwI$, the spectral theorem gives
\[
 c^{1/p}\|f\|_{L^p(w;\Hh)}\le\|f\|_{L^p(W)}
 \le C^{1/p}\|f\|_{L^p(w;\Hh)}.
\]
Hence the same characterization holds for such $W$. In particular,
\[
 \|P_\eta\|_{L^p(W)\to L^p(W)}
 \le (C/c)^{1/p}C_{p,\eta}
       [w]_{B_p(\eta)}^{\max\{1,1/(p-1)\}}.
\]

\begin{proposition}\label{prop:power}
For $w_\beta(z)=(1-|z|^2)^\beta$,
\[
 w_\beta\in B_p(\eta)
 \quad\Longleftrightarrow\quad
 -(\eta+1)<\beta<(p-1)(\eta+1).
\]
Within this range,
\[
 [w_\beta]_{B_p(\eta)}
 =\frac{\eta+1}{\eta+1+\beta}
   \left(\frac{\eta+1}{\eta+1-\beta/(p-1)}\right)^{p-1}.
\]
\end{proposition}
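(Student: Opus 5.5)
The plan is to compute both averages in the $B_p(\eta)$ characteristic exactly, for every Carleson square, using polar coordinates. Since $w_\beta$ is radial, the answer should depend only on the height of the square, and I expect the product of the two averages to be independent of the height, which gives the stated formula as an identity for every $S$.

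\textbf{Reduction to a one-variable integral.} Fix an arc $I$ with $0<|I|\le1$ and put $h=|I|$. Write $dA=\pi^{-1}r\,dr\,d\theta$. For any radial integrand $g(|z|)$, the integral over $S(I)$ factors into an angular factor proportional to $|I|$ times $\int_{1-h}^1 g(r)\,r\,dr$. The angular factor also appears in $\mueta(S(I))$, so it cancels in every average $\fint_{S(I)}$. The same holds for $S(\partial\D)=\D$, where $h=1$. Substituting $t=1-r^2$, so that $dt=-2r\,dr$, turns the range $1-h<r<1$ into $0<t<a$ with $a=1-(1-h)^2\in(0,1]$. This gives
\[
 \fint_{S(I)}(1-|z|^2)^{\alpha}\,d\mueta
 =\frac{\int_0^a t^{\eta+\alpha}\,dt}{\int_0^a t^{\eta}\,dt},
\]
valid for any real $\alpha$. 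The denominator is finite because $\eta>-1$. The numerator is finite if and only if $\eta+\alpha>-1$, in which case the ratio equals $\frac{\eta+1}{\eta+1+\alpha}\,a^{\alpha}$.

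\textbf{Both directions and the exact value.} I apply this with $\alpha=\beta$ for $w_\beta$, and with $\alpha=-\beta/(p-1)$ for $w_\beta^{-1/(p-1)}$.

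If $\beta\le-(\eta+1)$, the first average is $+\infty$ on every square. If $\beta\ge(p-1)(\eta+1)$, then $\eta-\beta/(p-1)\le-1$ and the second average is $+\infty$. In either case $[w_\beta]_{B_p(\eta)}=\infty$.

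Inside the range both averages are finite and positive. The product is
\[
 \frac{\eta+1}{\eta+1+\beta}\,a^{\beta}\cdot\Bigl(\frac{\eta+1}{\eta+1-\beta/(p-1)}\Bigr)^{p-1}a^{-\beta},
\]
and the powers of $a$ cancel. So every square, including $\D$, gives exactly the stated number, and the supremum equals it.

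\textbf{Main point to check.} The only delicate step is the cancellation of the angular factor, including the boundary case $|I|=1$. There one must confirm that $S(\partial\D)=\D$, up to a null set, fits the same formula with $a=1$, and that the tents have exactly the product form in polar coordinates. Both follow directly from the definition of $S(I)$. No extremal square needs to be identified, because the characteristic is constant over all squares.
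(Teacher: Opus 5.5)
Your proposal is correct and follows the paper's proof essentially step for step. Both use polar coordinates and the substitution $t=1-r^2$ to get averages $\frac{\eta+1}{\eta+1+\alpha}\,a^{\alpha}$, where your $a$ is the paper's $q_h=2h-h^2$; both note divergence exactly when $\eta+\alpha\le-1$, and the powers of $a$ cancel in the product, so the characteristic is the same on every square.
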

\begin{proof}
Let $h=|I|$ and put $q_h=2h-h^2$. In polar coordinates, the angular integral gives a factor $h$. The substitution $t=1-r^2$ then gives
\[
 \int_{S(I)}w_\beta\,d\mueta
 =h(\eta+1)\int_0^{q_h}t^{\eta+\beta}\,dt.
\]
This integral is finite exactly when $\eta+\beta>-1$. In that case,
\[
 \mueta(S(I))=h q_h^{\eta+1},\qquad
 \fint_{S(I)}w_\beta\,d\mueta
 =\frac{\eta+1}{\eta+1+\beta}q_h^\beta,
\]
where the measure formula is the case $\beta=0$. Apply the same computation with $-\beta/(p-1)$ in place of $\beta$. Finiteness of both integrals is equivalent to the stated strict inequalities. Multiplying the two averages cancels the powers of $q_h$. The product is independent of $I$, which gives the exact characteristic. At either endpoint, one of the integrals contains $\int_0^{q_h}t^{-1}\,dt$ and diverges.
\end{proof}

For instance, at $\eta=0$, the weight $(1-|z|^2)^{3/4}$ belongs to $B_2$ but not to $B_{3/2}$. The weight $(1-|z|^2)^{3/2}$ belongs to $B_3$ but not to $B_2$. Thus the $L^2$ condition cannot be retained unchanged for other exponents. The dual weight in \eqref{eq:dualweight} also changes with $p$.

\section{The necessity part}\label{sec:necessity-matrix}
The argument applies to any separable Hilbert space. We use the localization identity from \cite[Proposition~2.1]{AlemanConstantin12}.
Let
\[
 k_{\lambda}^{\eta}(z)=
 \frac{(1-|\lambda|^2)^{(\eta+2)/2}}{(1-\overline\lambda z)^{\eta+2}}
\]
be the normalized scalar reproducing kernel, so $\|k_\lambda^\eta\|_{L^2(\mueta)}=1$. Define the normalized kernel projection
\[
 P_{\eta,\lambda}f
 =k_{\lambda}^{\eta}
 \int_\D f(\zeta)\overline{k_{\lambda}^{\eta}(\zeta)}\,d\mueta(\zeta).
\]
For general $f\in L^p(W)$, this vector integral is understood weakly. For fixed $\lambda$, the function $k_\lambda^\eta$ is bounded, so Section~\ref{sec:prelim-scalar} gives its existence.

\begin{lemma}\label{lem:Plambda}
Assume $1<p<\infty$ and $P_\eta$ is bounded on $L^p(W)$.  Then
\[
 \sup_{\lambda\in\D}
 \|P_{\eta,\lambda}\|_{L^p(W)\to L^p(W)}
 \le C_\eta\|P_\eta\|_{L^p(W)\to L^p(W)}.
\]
The constant is independent of $p$, $W$, and the Hilbert-space dimension.
\end{lemma}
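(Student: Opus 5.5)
The plan is to write $P_{\eta,\lambda}$ as an absolutely convergent series of operators $M_{u}P_\eta M_{\bar u}$, where $u$ is a scalar function with $|u|\le1$. Multiplication by a bounded scalar function $b$ commutes with $U(z)$, so $\|M_b\|_{L^p(W)\to L^p(W)}\le\|b\|_\infty$ for any weight. Each term is therefore controlled by $\|P_\eta\|$. The constant will be $C_\eta=\sum_n|a_n|$, where $a_n$ are the Taylor coefficients of $(1-x)^{\eta+2}$. This is independent of $p$, $W$, $\lambda$ and the dimension.

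\emph{Step 1: kernel identity.} Let $\varphi_\lambda(z)=(\lambda-z)/(1-\overline\lambda z)$. The classical identity is
\[
 1-\overline{\varphi_\lambda(\zeta)}\varphi_\lambda(z)
 =\frac{(1-|\lambda|^2)(1-\overline\zeta z)}{(1-\overline\lambda z)(1-\lambda\overline\zeta)}.
\]
Raise it to the power $\eta+2$, taking principal branches. All factors have positive real part, and both sides equal $1$ at $z=\zeta=\lambda$, which fixes consistency of branches. This gives
\[
 k_\lambda^\eta(z)\overline{k_\lambda^\eta(\zeta)}
 =\frac{\bigl(1-\overline{\varphi_\lambda(\zeta)}\varphi_\lambda(z)\bigr)^{\eta+2}}{(1-\overline\zeta z)^{\eta+2}} .
\]
Expand $(1-x)^{\eta+2}=\sum_{n\ge0}a_nx^n$ with $a_n=(-1)^n\binom{\eta+2}{n}$. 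Since $\eta+2>1$, we have $|a_n|\lesssim n^{-\eta-3}$, so $A_\eta:=\sum_n|a_n|<\infty$. The series converges uniformly for $|x|\le1$, and in particular for $x=\overline{\varphi_\lambda(\zeta)}\varphi_\lambda(z)$ with $z,\zeta\in\D$. Hence the kernel of $P_{\eta,\lambda}$ equals $\sum_n a_n\varphi_\lambda(z)^n\overline{\varphi_\lambda(\zeta)^n}(1-\overline\zeta z)^{-\eta-2}$.

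\emph{Step 2: operator identity on the dense domain.} Take $f\in\mathscr D_W$, which is Bochner integrable by Section~\ref{sec:prelim-scalar}. Fix $z$. The partial sums of the series are bounded uniformly in $\zeta$ by $A_\eta\sup_\zeta|1-\overline\zeta z|^{-\eta-2}$. Dominated convergence therefore gives, pointwise in $z$,
\[
 P_{\eta,\lambda}f=\sum_n a_n M_{\varphi_\lambda^n}P_\eta M_{\overline{\varphi_\lambda}^{\,n}}f .
\]
Here $M_{\overline{\varphi_\lambda}^{\,n}}f$ is again Bochner integrable, so $P_\eta$ acts on it by the kernel formula. The series converges absolutely in $L^p(W)$, with sum of norms at most $A_\eta\|P_\eta\|\,\|f\|_{L^p(W)}$. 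Its $L^p$ limit agrees almost everywhere with the pointwise limit. Thus $\|P_{\eta,\lambda}f\|_{L^p(W)}\le A_\eta\|P_\eta\|\,\|f\|_{L^p(W)}$ on $\mathscr D_W$.

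\emph{Step 3: passage to all of $L^p(W)$.} This is the delicate point. On general $f$, the operator $P_{\eta,\lambda}$ is defined through the weak average, and $P_\eta$ through its bounded extension. I would show that $f\mapsto\int f\,\overline{k_\lambda^\eta}\,d\mueta$ is continuous from $L^p(W)$ to $\Hh$ on the whole disc. Split $\D$ into the disc $\{|\zeta|\le\frac12\}$ and dyadic annuli covered by finitely many tents. The estimate \eqref{eq:directional-uniform-local} holds on each tent. The kernel $k_\lambda^\eta$ is bounded. Since $\D$ itself is a tent, the estimate on $\D$ gives the continuity directly. It follows that $P_{\eta,\lambda}$ is bounded on $L^p(W)$, since $\|k_\lambda^\eta\|_{L^p(W)}$ applied to a fixed vector is finite by \eqref{eq:standing}. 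Now approximate a general $f$ by elements of $\mathscr D_W$. The left side converges by this continuity, and the right side is a bounded operator. The inequality therefore extends, which gives the lemma with $C_\eta=A_\eta$.
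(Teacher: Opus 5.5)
Your route is the paper's own. You use the same M\"obius identity for $k_\lambda^\eta(z)\overline{k_\lambda^\eta(\zeta)}$, the same binomial expansion with $\sum_n|a_n|<\infty$, and the same operator series $\sum_n a_nM_{\varphi_\lambda^n}P_\eta M_{\overline{\varphi_\lambda}^{\,n}}$ with constant $\sum_n|a_n|$. The density passage is also the same, using the root case $Q=\D$ of \eqref{eq:directional-uniform-local}.

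One step in Step~2 does not follow as written: ``$M_{\overline{\varphi_\lambda}^{\,n}}f$ is again Bochner integrable, so $P_\eta$ acts on it by the kernel formula.'' On $L^p(W)$, $P_\eta$ is by definition the bounded extension from $\mathscr D_W$. But $\overline{\varphi_\lambda}^{\,n}f$ is generally not in $\mathscr D_W$, since its image under $U$ is $\overline{\varphi_\lambda}^{\,n}F$, which is not simple. Bochner integrability alone therefore identifies nothing, and you need an approximation argument.

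The paper approximates $b=\overline{\varphi_\lambda}^{\,n}$ uniformly by finite-valued scalar functions $b_m$. Then $b_mf\in\mathscr D_W$, and $b_mf\to bf$ both in $L^p(W)$ and in $L^1(\mueta;\Hh)$. The first convergence controls the values of the extension. The second gives pointwise convergence of the kernel integrals, because $|K_\eta(z,\zeta)|\le(1-|z|)^{-\eta-2}$. Passing to a subsequence along which $UP_\eta(b_mf)$ converges almost everywhere and applying $U(z)^{-1}$ identifies the two limits.

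Alternatively, use the weak continuity you invoke in Step~3, applied to $K_\eta(z,\cdot)$ at a fixed $z$ instead of to $\overline{k_\lambda^\eta}$. This shows that the extension is given by the (weak) kernel integral at almost every $z$ for every $f\in L^p(W)$, as in the proof of Proposition~\ref{prop:finite-sections}.

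Two smaller points in Step~3. Boundedness of $e\mapsto k_\lambda^\eta e$ into $L^p(W)$ needs the uniform constant $C_\D$ from \eqref{eq:directional-uniform-local}, not merely the finiteness given by \eqref{eq:standing}. The splitting into annuli is unnecessary, as you yourself note.
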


\begin{proof}
Put $\alpha=\eta+2$ and
$\varphi_\lambda(z)=(\lambda-z)/(1-\overline\lambda z)$.
The identity
\[
 1-\varphi_\lambda(z)\overline{\varphi_\lambda(\zeta)}
 =\frac{(1-|\lambda|^2)(1-\overline\zeta z)}
 {(1-\overline\lambda z)(1-\lambda\overline\zeta)}
\]
gives
\[
 k_\lambda^\eta(z)\overline{k_\lambda^\eta(\zeta)}
 =K_\eta(z,\zeta)
   (1-\varphi_\lambda(z)\overline{\varphi_\lambda(\zeta)})^\alpha,
 \qquad K_\eta(z,\zeta)=(1-\overline\zeta z)^{-\alpha}.
\]
All powers use the analytic branch equal to $1$ at the origin for $(1-t)^\alpha$. The algebraic identity holds at power $1$. The factors have analytic logarithms on the bidisc in $z$ and $\overline\zeta$. The difference of the logarithms is a continuous multiple of $2\pi i$, hence is constant. It vanishes at $z=\zeta=\lambda$. This proves the identity for real $\alpha$.
Write
\[
 (1-t)^\alpha=\sum_{n\ge0}d_nt^n,
 \qquad d_n=(-1)^n\binom\alpha n.
\]
If $\alpha$ is an integer, this sum is finite. Otherwise, for $n>\alpha$,
\[
 |d_{n+1}|=\left(1-\frac{\alpha+1}{n+1}\right)|d_n|.
\]
Iteration and $\log(1-u)\le-u$ give $|d_n|\le C_\alpha(n+1)^{-\alpha-1}$. Thus in both cases $D_\alpha:=\sum_n|d_n|<\infty$.
Consider the operator series
\[
 S_\lambda
 :=\sum_{n\ge0}d_n M_{\varphi_\lambda^n}P_\eta
                       M_{\overline{\varphi_\lambda}^{\,n}}.
\]
Here $M_a f=af$. Since $|\varphi_\lambda|\le1$, both multipliers are contractions on $L^p(W)$. The series converges in operator norm and $\|S_\lambda\|\le D_\alpha\|P_\eta\|$. We now show that $S_\lambda=P_{\eta,\lambda}$.
We first justify the kernel formula after scalar multiplication. Let $f\in\mathscr D_W$ and let $b$ be bounded and measurable. Choose finite-valued scalar functions $b_m$ with $\|b_m-b\|_\infty\to0$. Then $b_mf\in\mathscr D_W$ and
\[
 \|(b_m-b)f\|_{L^p(W)}\le\|b_m-b\|_\infty\|f\|_{L^p(W)},
 \qquad
 \|(b_m-b)f\|_{L^1}\le\|b_m-b\|_\infty\|f\|_{L^1}.
\]
Thus the bounded extension of $P_\eta$ agrees with its kernel integral on $bf$.
Indeed, the first convergence gives convergence of the operator values in $L^p(W)$. The second gives pointwise convergence of the kernel integrals, since $|K_\eta(z,\zeta)|\le(1-|z|)^{-\alpha}$ for fixed $z$.
An almost everywhere convergent subsequence after multiplication by $U$ identifies these limits; apply $U(z)^{-1}$ pointwise.

Apply this observation with $b=\overline{\varphi_\lambda}^{\,n}$.
For fixed $z$, absolute summability of $(d_n)$ permits termwise integration against the Bochner integrable function $f$.
The scalar identity gives the kernel defining $P_{\eta,\lambda}$.
The same subsequence argument identifies this pointwise limit with the operator-norm sum on $\mathscr D_W$.
For completeness, \eqref{eq:directional-uniform-local} with $Q=\D$ and $\mueta(\D)=1$ gives
\[
 \left\|\int_\D f\,\overline{k_\lambda^\eta}\,d\mueta\right\|
 \le C_\D^*\|k_\lambda^\eta\|_\infty\|f\|_{L^p(W)},
 \qquad
 \|k_\lambda^\eta e\|_{L^p(W)}
 \le C_\D\|k_\lambda^\eta\|_\infty\|e\|.
\]
Thus $P_{\eta,\lambda}$ is bounded for each fixed $\lambda$. Density identifies it with $S_\lambda$. The uniform estimate comes from the bound for $S_\lambda$, whose constant is $D_\alpha$ and is independent of $\lambda$, $p$, and $W$.
This is the localization argument of \cite{AlemanConstantin12} in a form valid for every $p$.
\end{proof}

\begin{lemma}\label{lem:kernel-square}
For every Carleson square $S=S(I)$ there exists $\lambda_S\in\D$, determined by the midpoint and length of $I$, such that
\begin{equation}\label{eq:kernel-square}
 |k_{\lambda_S}^{\eta}(z)|
 \asymp_\eta \mueta(S)^{-1/2},
 \qquad z\in S.
\end{equation}
\end{lemma}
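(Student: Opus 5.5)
Let $I$ have midpoint $e^{i\theta}$ and length $h=|I|\in(0,1]$. Take $\lambda_S=(1-ch)e^{i\theta}$ with a fixed constant, say $c=1/2$, so that $1-|\lambda_S|\asymp h$. Two comparisons are needed: one for the numerator of $k_{\lambda_S}^\eta$ and one for its denominator. The numerator is $(1-|\lambda_S|^2)^{(\eta+2)/2}\asymp h^{(\eta+2)/2}$, with absolute constants, since $1-|\lambda|^2\in[1-|\lambda|,2(1-|\lambda|)]$.

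For the denominator, take $z=re^{it}\in S$, so that $1-h<r<1$ and $|t-\theta|\le\pi h$. With arc length normalized to total $1$, the angular half-width is $\pi h$ in radians. The key estimate is
\[
 |1-\overline{\lambda_S}z|\asymp h,
 \qquad z\in S,
\]
with absolute constants. For the lower bound, use $|1-\overline\lambda z|\ge1-|\lambda||z|\ge1-|\lambda|=ch$. For the upper bound, split
\[
 |1-\overline\lambda z|\le(1-|\lambda|r)+|\lambda|r\,|1-e^{i(t-\theta)}|,
\]
which gives at most $(1-|\lambda|)+(1-r)+|t-\theta|\le ch+h+\pi h$. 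Then $|1-\overline\lambda z|^{\eta+2}\asymp_\eta h^{\eta+2}$, and therefore
\[
 |k_{\lambda_S}^\eta(z)|\asymp_\eta h^{(\eta+2)/2-(\eta+2)}=h^{-(\eta+2)/2}.
\]
Note that $\eta+2>1>0$, so raising to this power preserves the two-sided bounds.

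It remains to compare with $\mueta(S)$. The computation in the proof of Proposition~\ref{prop:power}, with $\beta=0$, gives $\mueta(S(I))=h q_h^{\eta+1}$, where $q_h=2h-h^2$. Since $h\le q_h\le2h$ for $0<h\le1$, we get $\mueta(S)\asymp_\eta h^{\eta+2}$, and hence $\mueta(S)^{-1/2}\asymp_\eta h^{-(\eta+2)/2}$. Combining the two estimates yields \eqref{eq:kernel-square}.

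The only point needing care is the case $S=\D$, that is, $h=1$. The definition then gives $\lambda_S=\tfrac12e^{i\theta}$, which is still in $\D$, and every estimate above holds verbatim with $h=1$. Also, for $h$ near $1$ the angular width $\pi h$ wraps around the circle, but the upper bound only uses $|1-e^{is}|\le|s|$, so this causes no difficulty. I do not expect a genuine obstacle. The main thing to track is that every constant depends only on $\eta$, through the exponent $\eta+2$ and the factor $2^{\eta+1}$, and not on $I$.
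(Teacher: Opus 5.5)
Your proof is correct and follows essentially the same route as the paper. The only difference is the constant: the paper takes $\lambda_S=(1-h)e^{i\theta_I}$, so $\lambda_S=0$ at the root, where you take $c=1/2$. Otherwise both arguments bound $|1-\overline{\lambda_S}z|$ below by $1-|\lambda_S|$ and above by $(1-|\lambda_S|)+(1-r)+|t-\theta_I|\lesssim h$, and then substitute $\mueta(S)\asymp_\eta h^{\eta+2}$.
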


\begin{proof}
Let $h=|I|$ and let $e^{i\theta_I}$ be the midpoint of $I$. Set $\lambda_S=(1-h)e^{i\theta_I}$. If $z=re^{it}\in S$, choose the angular difference with $|t-\theta_I|\le\pi h$. Then
\[
 h\le |1-\overline{\lambda_S}z|
 \le h+(1-r)+|t-\theta_I|\le(2+\pi)h,
 \qquad h\le1-|\lambda_S|^2=2h-h^2\le2h.
\]
These bounds also hold for the root, where $h=1$ and $\lambda_S=0$. Since $\mueta(S)\asymp_\eta h^{\eta+2}$, substitution proves the claim.
\end{proof}

\begin{proposition}\label{prop:matrix-necessity}
If $P_\eta$ is bounded on $L^p(W)$, then
\begin{equation}\label{eq:necessity-bound}
 [W]_{\mathbf B_p^{\rm av}(\eta)}^{1/p}
 \lesssim_\eta \|P_\eta\|_{L^p(W)\to L^p(W)}.
\end{equation}
The constant is independent of dimension.
\end{proposition}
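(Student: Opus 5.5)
We plan to dominate each averaging operator $E_S$ by the normalized kernel projection $P_{\eta,\lambda_S}$, localized by the multiplier $M_S f=\mathbf 1_S f$. Then Lemma~\ref{lem:Plambda} and Lemma~\ref{lem:kernel-square} give the estimate. Fix a Carleson square $S$ and let $\lambda=\lambda_S$ be as in Lemma~\ref{lem:kernel-square}. For $z\in S$ write $k_\lambda^\eta(z)=|k_\lambda^\eta(z)|e^{i\theta(z)}$, and define the unimodular bounded function $u=\mathbf 1_S e^{i\theta}$. The multiplier $M_u$ is a contraction on $L^p(W)$, since $W$ commutes with scalars. The same holds for $M_{\bar u}$ and for multiplication by any bounded scalar function of modulus at most one.

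The key step is the identity
\[
 M_{\bar u}\,M_S P_{\eta,\lambda} M_S\, M_{\bar u}^{\ast\text{-type}} f
 = \mathbf 1_S |k_\lambda^\eta| \int_S f\,|k_\lambda^\eta|\,d\mueta .
\]
Concretely, $P_{\eta,\lambda}(\mathbf 1_S u f)=k_\lambda^\eta\int_S f\,u\,\overline{k_\lambda^\eta}\,d\mueta=k_\lambda^\eta\int_S f|k_\lambda^\eta|\,d\mueta$. Multiplying by $\bar u$ turns $k_\lambda^\eta$ into $\mathbf 1_S|k_\lambda^\eta|$. Hence the operator $T_S f:=\mathbf 1_S|k_\lambda^\eta|\int_S f|k_\lambda^\eta|\,d\mueta$ satisfies $\|T_S\|\le\|P_{\eta,\lambda}\|\le C_\eta\|P_\eta\|$. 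The weak integrals are handled exactly as in Section~\ref{sec:prelim-scalar}, since $|k^\eta_\lambda|$ is bounded for fixed $\lambda$.

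It remains to compare $T_S$ with $E_S$. Write $|k_\lambda^\eta|=\mueta(S)^{-1/2}a$ on $S$, where $c\le a\le C$ by \eqref{eq:kernel-square}. Then $T_S f=\mathbf 1_S a\fint_S a f\,d\mueta$. We do not try to remove $a$ by a pointwise operator inequality, because the weight is noncommutative. Instead we remove it with multipliers: $E_S f=M_{1/a}T_S(M_{1/a}f)$ on $\mathbf 1_S$-supported functions. The multiplier $M_{\mathbf 1_S/a}$ has norm at most $1/c$ on $L^p(W)$, since it is a scalar multiplier bounded by $1/c$. Also $E_S=E_S M_S$. Therefore
\[
 \|E_S\|\le c^{-2}\|T_S\|\le c^{-2}C_\eta\|P_\eta\|,
\]
and raising to the power $p$ and taking the supremum over $S$ gives \eqref{eq:necessity-bound}. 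The constant depends only on $\eta$, through $D_\alpha$ and the constants in \eqref{eq:kernel-square}. In particular it is independent of $p$ and of the dimension.

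The main point needing care is the rigorous version of the identity above for general $f\in L^p(W)$, since the averages are only weak vector averages. We will first verify the identity on $\mathscr D_W$, where all integrals are Bochner integrals. We then extend it by density, using that every operator involved is bounded on $L^p(W)$. For $E_S$ and $P_{\eta,\lambda}$, boundedness comes from Section~\ref{sec:prelim-scalar} and the proof of Lemma~\ref{lem:Plambda}. We also need that the phase function $\theta$ can be chosen measurably, which is clear because $k_\lambda^\eta$ is continuous and nonvanishing.
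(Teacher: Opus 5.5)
Your proof is correct and is essentially the paper's argument. The paper writes $E_S=M_{\mathbf 1_S/m_S}P_{\eta,\lambda_S}M_{\mathbf 1_S/\overline{m_S}}$ with $m_S=\mu_\eta(S)^{1/2}k_{\lambda_S}^\eta$; since $m_S=a\,e^{i\theta}$ on $S$, your factorization $E_S=M_{\mathbf 1_S/a}M_{\bar u}P_{\eta,\lambda_S}M_uM_{\mathbf 1_S/a}$ is the same identity with the multiplier split into phase and modulus. The density step you plan is unnecessary, because $P_{\eta,\lambda}$ is defined by weak integrals on all of $L^p(W)$, so the identity already holds for every $f$ once the weak pairings are written out.
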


\begin{proof}
Fix $S$ and choose $\lambda_S$ as in Lemma~\ref{lem:kernel-square}.  Set
\[
 m_S(z)=\mueta(S)^{1/2}k_{\lambda_S}^{\eta}(z).
\]
On functions supported in $S$ there is the exact factorization
\[
 \mathbf1_SP_{\eta,\lambda_S}\mathbf1_S
 =M_{m_S}E_SM_{\overline{m_S}}.
\]
By \eqref{eq:kernel-square}, both $M_{m_S}$ and its inverse on $S$ are bounded scalar multipliers with norms controlled only by $\eta$.  More explicitly, extend $m_S^{-1}$ and $\overline{m_S}^{-1}$ by zero off $S$. Then
\[
 E_S=M_{\mathbf1_S/m_S}\,P_{\eta,\lambda_S}\,
       M_{\mathbf1_S/\overline{m_S}}.
\]
Both scalar multipliers in this formula are bounded. Therefore
\[
 \|E_S\|_{L^p(W)\to L^p(W)}
 \lesssim_\eta
 \|P_{\eta,\lambda_S}\|_{L^p(W)\to L^p(W)}.
\]
Taking the supremum over $S$ and applying Lemma~\ref{lem:Plambda} gives \eqref{eq:necessity-bound}.  The argument used only scalar multiplication and therefore does not depend on finite dimensionality.
\end{proof}

\section{The sufficiency part}\label{sec:separated}
We factor the localized scalar kernel into separated variables. The coefficients have a summable bound independent of the tent scale. The weight enters only after this step, through Lemma~\ref{lem:layer-tree}.

\subsection{A separated kernel expansion}

Put
\[
 \alpha=\eta+2>1,
 \qquad K_\eta(z,\zeta)=(1-\overline\zeta z)^{-\alpha}.
\]
Bisect the circle into arcs and continue by bisecting each arc. Together with the full circle, these arcs form a dyadic grid. Take arcs half-open so that each generation is a partition. Endpoints have measure zero and do not affect the estimates. Write $\T$ for the full family of their Carleson tents. Any two tents are disjoint or one contains the other.

We use one such grid and its rotations by one third and two thirds of a turn. Every arc $J$ lies in an arc $I$ from one of these three grids with $|I|\le6|J|$. If $|J|\ge1/6$, take the full circle. Otherwise choose a dyadic length $H$ with $3|J|<H\le6|J|$; then $H\le1/2$. At this scale, the combined endpoints of the three grids are spaced by $H/3$. The arc $J$ meets an endpoint of at most one grid. One of the other grids therefore has an arc $I$ of length $H$ containing $J$.

A family $\mathscr S$ is $\delta$-sparse, where $0<\delta<1$, if there are pairwise disjoint measurable sets $G_Q\subset Q$ with $\mueta(G_Q)\ge\delta\mueta(Q)$ for every $Q\in\mathscr S$.

\begin{lemma}\label{lem:full-tree-sparse}
Let $Q=S(I)$ and let $Q_1,Q_2$ be the two tents associated with the dyadic children of $I$.  Then there is a number $\kappa_\eta<1$, depending only on $\eta$, such that
\[
 \mueta(Q_1\cup Q_2)\le \kappa_\eta\mueta(Q).
\]
Consequently every full dyadic tent tree is sparse.
\end{lemma}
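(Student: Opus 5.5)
The plan is to compute the measures explicitly in polar coordinates, as in the proof of Proposition~\ref{prop:power}. Let $h=|I|$ and $q_h=2h-h^2=1-(1-h)^2$. The tent $Q=S(I)$ is an angular sector of relative angle $h$ over the radial interval $1-h<r<1$. The substitution $t=1-r^2$ gives
\[
 \mueta(Q)=h\,q_h^{\eta+1}.
\]
The children $I_1,I_2$ have length $h/2$, and their tents are disjoint sectors of angle $h/2$ each over $1-h/2<r<1$. Hence
\[
 \mueta(Q_1\cup Q_2)=h\,q_{h/2}^{\eta+1}.
\]
This uses the half-open convention: the two child arcs partition $I$, and endpoints are null. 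The root case $h=1$, $Q=\D$, is covered by the same formula with $q_1=1$.

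The ratio is therefore $(q_{h/2}/q_h)^{\eta+1}$. Since $\eta+1>0$, it suffices to bound $q_{h/2}/q_h$ by a constant strictly less than $1$, uniformly in $h\in(0,1]$. We have
\[
 \frac{q_{h/2}}{q_h}=\frac{h-h^2/4}{2h-h^2}=\frac{1-h/4}{2-h}.
\]
This is increasing in $h$ on $(0,1]$: the derivative of the numerator times $(2-h)$, minus the numerator times $-1$, equals $-\tfrac{1}{2}+\tfrac{h}{4}+1-\tfrac{h}{4}=\tfrac12>0$. The maximum is therefore $3/4$, attained at $h=1$. So we may take $\kappa_\eta=(3/4)^{\eta+1}<1$. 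This is the only place where care is needed, and it is elementary: the ratio must not approach $1$ at the largest scale, and the explicit formula shows it does not. For small $h$ the ratio tends to $(1/2)^{\eta+1}$.

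For sparseness, given any subfamily of the full tree $\T$, we take $G_Q=Q\setminus(Q_1\cup Q_2)$, the top half of $Q$. These sets are pairwise disjoint across all tents of the grid. If $Q'\subsetneq Q$, then $Q'\subset Q_1\cup Q_2$, which misses $G_Q$. Incomparable tents are disjoint. The first part gives
\[
 \mueta(G_Q)\ge(1-\kappa_\eta)\mueta(Q),
\]
so the full tree is $\delta$-sparse with $\delta=1-\kappa_\eta$, depending only on $\eta$. The same holds for the rotated grids, since $\mueta$ is rotation invariant.
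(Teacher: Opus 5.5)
Your proposal is correct and follows the paper's proof. You use the same polar-coordinate formula $\mueta(S(I))=h\,q_h^{\eta+1}=h^{\eta+2}(2-h)^{\eta+1}$, the same sharp constant $\kappa_\eta=(3/4)^{\eta+1}$ attained at the root, and the same top halves $G_Q=Q\setminus(Q_1\cup Q_2)$ for sparseness. The only difference is cosmetic: you show the ratio $(1-h/4)/(2-h)$ is increasing via its derivative, while the paper bounds $(2-h/2)/(2-h)\le 3/2$ directly.
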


\begin{proof}
Write $h=|I|$.  A direct polar-coordinate integration gives
\[
 \mueta(S(I))=h^\alpha(2-h)^{\alpha-1},
\]
under our normalization of arc length and area measure.
Hence
\[
 \frac{\mueta(Q_1)+\mueta(Q_2)}{\mueta(Q)}
 =2^{1-\alpha}
 \left(\frac{2-h/2}{2-h}\right)^{\alpha-1}
 \le \left(\frac34\right)^{\alpha-1}=:\kappa_\eta<1.
\]
The inequality uses $(2-h/2)/(2-h)\le3/2$ for $0<h\le1$ and $\alpha-1>0$. It includes the root $h=1$.
Take $G_Q=Q\setminus(Q_1\cup Q_2)$. If $R$ is a proper descendant of $Q$, then $R$ lies in one of the two children of $Q$, so $R\cap G_Q=\varnothing$. Tents in different branches are disjoint. Hence the sets $G_Q$ are pairwise disjoint. They satisfy $\mueta(G_Q)\ge(1-\kappa_\eta)\mueta(Q)$ and are exactly the top halves defined in the introduction.
\end{proof}

\begin{theorem}[Uniform separated tent factorization]\label{thm:separated-factorization}
There are three dyadic tent trees $\T_1,\T_2,\T_3$ with the following property. Put
$\mathcal N=\mathbb Z^2\times\mathbb Z^2$. There are numbers $C_\nu\ge0$ such that
\begin{equation}\label{eq:l1coeff}
 \sum_{\nu\in\mathcal N}C_\nu<\infty.
\end{equation}
For every $j$, $Q\in\T_j$, and $\nu\in\mathcal N$, there are measurable scalar functions
$a_{Q,\nu}^{(j)}$, $b_{Q,\nu}^{(j)}$ that vanish off $Q$, and scalars $c_{Q,\nu}^{(j)}$, such that
\[
 |a_{Q,\nu}^{(j)}|\le1,\qquad
 |b_{Q,\nu}^{(j)}|\le1,\qquad
 |c_{Q,\nu}^{(j)}|\le C_\nu,
\]
and
\begin{equation}\label{eq:separated-kernel}
 K_\eta(z,\zeta)
 =\sum_{j=1}^3\sum_{Q\in\T_j}\frac1{\mueta(Q)}
   \sum_{\nu\in\mathcal N}c_{Q,\nu}^{(j)}
       a_{Q,\nu}^{(j)}(z)b_{Q,\nu}^{(j)}(\zeta)
\end{equation}
for almost every $(z,\zeta)\in\D\times\D$.
For each fixed pair, only finitely many tents contribute. For each tent, the inner series converges absolutely and uniformly. More precisely, for $\nu=(r,q)\in\mathbb Z^2\times\mathbb Z^2$, we may take
\[
 C_{(r,q)}=C_\eta(1+|r|+|q|)^{-6}.
\]
The constant depends only on $\eta$, after the cutoffs and the three grids have been fixed as below.
\end{theorem}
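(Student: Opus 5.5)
The strategy is to decompose $\D\times\D$ into pieces adapted to the geometry of $|1-\overline\zeta z|$ and then expand the kernel on each piece by a Fourier series. On each piece $|1-\overline\zeta z|\asymp h$, where $h=|I|$ for a tent $Q=S(I)$ containing both points, so after normalization the kernel is a smooth bounded function of rescaled local coordinates.

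\textbf{Step 1: partition into tent pairs.} For $(z,\zeta)\in\D\times\D$ with $z\ne\zeta$, let $J(z,\zeta)$ be the smallest arc containing the radial projections of $z,\zeta$. Enlarge it to length comparable to $\max(1-|z|,1-|\zeta|,|\arg z-\arg\zeta|)$, so that both points lie in $S(J)$ and at least one of them lies in a fixed fraction of the top part. By the three-grid lemma proved above, $J$ lies in a dyadic arc $I$ from one of $\T_1,\T_2,\T_3$ with $|I|\le 6|J|$. To get a genuine partition rather than a cover, I fix a measurable selection rule: take the smallest admissible $I$, and break ties by the grid index $j$. This gives disjoint measurable sets $\Gamma_Q^{(j)}\subset Q\times Q$ with
\[
 \sum_{j,Q}\mathbf1_{\Gamma^{(j)}_Q}=1 \quad\text{a.e.}
\]
On $\Gamma_Q^{(j)}$ we have $|1-\overline\zeta z|\asymp_\eta h$, hence $|K_\eta|\asymp h^{-\alpha}\asymp \mueta(Q)^{-1}$, using $\mueta(Q)\asymp h^{\alpha}$ from the proof of Lemma~\ref{lem:full-tree-sparse}. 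Only finitely many tents contribute at a given pair, because $h\gtrsim \max(1-|z|,1-|\zeta|)>0$.

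\textbf{Step 2: local Fourier expansion.} Rather than expanding $\mathbf1_\Gamma K$ directly, which is not smooth, I split $\mathbf1_{\Gamma_Q}$ further. Use coordinates $z=re^{it}$, $s=(t-\theta_I)/h\in[-1,1]$, $u=(1-r)/h\in(0,1]$. The kernel $\mueta(Q)K_\eta(z,\zeta)$ is then $F_h(s,u,s',u')$, where
\[
 F_h(s,u,s',u')=\mueta(Q)\,(1-(1-hu)(1-hu')e^{ih(s-s')})^{-\alpha}.
\]
The difficulty is that this is smooth only where it stays away from the singularity. On $\Gamma_Q$ one of $u,u'$, or $|s-s'|$, is bounded below, but $F_h$ itself is singular near $u=u'=0,\ s=s'$. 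I therefore choose $\Gamma_Q$ as a finite union of product sets of ``separated'' type, $A\times B$ with $A,B\subset Q$ and $\mathrm{dist}$ in the rescaled coordinates bounded below. Examples are a top half times the tent, or two children separated by a gap. Each such product is a product of indicators, and these indicators can be absorbed into $a$ and $b$. On a neighborhood of $\overline{A}\times\overline{B}$ in rescaled coordinates, $F_h$ is a smooth function whose $C^k$ norms are bounded uniformly in $h\in(0,1]$. This is a direct derivative computation: the base quantity is bounded below and all derivatives carry factors of $h$ times $h^{-1}$-scaled bounded terms.

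\textbf{Step 3: coefficients.} Multiply by a smooth cutoff equal to $1$ there and supported in $[-2,2]^4$. Expand in the Fourier series on the torus $(\mathbb R/8\mathbb Z)^4$ with frequencies $\nu=(r,q)\in\mathbb Z^2\times\mathbb Z^2$: $r$ for $(s,u)$, $q$ for $(s',u')$. Integrating by parts $k=10$ times gives coefficients bounded by $C_\eta(1+|r|+|q|)^{-6}$ after absorbing the sum, uniformly in $Q$. Set $a_{Q,\nu}=\mathbf1_A e^{i\pi r\cdot(s,u)/4}$ and $b_{Q,\nu}=\mathbf1_B e^{i\pi q\cdot(s',u')/4}$. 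The finitely many product pieces per tent have a uniformly bounded count, so the piece index can be folded into $\nu$ by reindexing or into the constant. Absolute and uniform convergence then follows from \eqref{eq:l1coeff}, since $\sum(1+|r|+|q|)^{-6}<\infty$ in four dimensions.

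\textbf{Main obstacle.} The hardest step is Step 1 combined with Step 2. One must construct the partition of $\D\times\D$ into separated product sets $A\times B$, uniformly across scales, such that every pair belongs to exactly one of them and each $A,B$ lies inside the chosen tent. I would take $A\times B$ ranging over $T_Q\times Q$ and $Q\times T_Q$ (with $T_Q$ the top half), plus $Q_1\times Q_2$ for non-adjacent grandchildren. The three-grid shift then handles adjacent boundary pairs, and I would subtract overlaps by a canonical ordering. Verifying that this ordering yields measurable, separated pieces, with the needed lower bound on $|1-\overline\zeta z|/h$, is the step I expect to need the most care.
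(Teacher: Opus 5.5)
Your route differs from the paper's in how the kernel is localized, and the step that makes your route work is not carried out. You cut $\D\times\D$ with indicators of product sets $A\times B$ and absorb $\mathbf 1_A$, $\mathbf 1_B$ into $a$ and $b$. The paper instead multiplies $K_\eta$ by the smooth cutoffs $\psi(2^k|1-\overline\zeta z|)\chi_{k,m}(x)\chi_{k,n}(y)$. These cutoffs need not be products: the Fourier series of each smooth localized piece already separates the variables. The at most $(L+2)^2$ pieces assigned to a tent are summed into one smooth function before expanding. So your worry that $\mathbf 1_\Gamma K_\eta$ is not smooth is avoided there simply by never using indicators in the scale variable. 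On your route, everything rests on an exact a.e.\ partition of $\D\times\D$ into product sets with the following properties: boundedly many per tent, of uniformly controlled shape, each separated at its scale (so $|1-\overline\zeta z|\asymp|I|$ on it), and each contained in $Q\times Q$ for a tent $Q=S(I)$ of one of the three grids. You have not constructed this partition, and your sketch does not produce one.

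\textbf{Why the sketch does not give the partition.} The selection rule of Step~1 (smallest admissible arc) produces sets that are not products, so it has to be discarded. The family in your last paragraph also does not cover $\D\times\D$. The two children $Q_1,Q_2$ of a tent are always adjacent. Consider pairs in $Q_1\times Q_2$ near the common endpoint $c$ of the child arcs, whose depths and angular distance are small compared with $h$. Such pairs lie in no $T_Q\times Q$, no $Q\times T_Q$, and no non-adjacent pair of grandchildren of $Q$. Passing to another grid only locates a tent of the right size containing such a pair. Inside that tent the pair may again straddle a dyadic point, so no separated product containing it is ever specified. Removing overlaps by an ordering would be harmless once you had a cover by separated products, since such a product meets only boundedly many others, all of comparable scale. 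The difficulty is the cover itself.

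\textbf{One way to finish.} In a single grid, the smallest common tent gives the exact partition
\[
 \D\times\D=\bigsqcup_{Q}\Big[(T_Q\times Q)\sqcup\big((Q_1\sqcup Q_2)\times T_Q\big)\sqcup(Q_1\times Q_2)\sqcup(Q_2\times Q_1)\Big].
\]
The first two kinds of piece are separated, because one point has depth at least $h/2$. For the remaining pieces, let $x$ denote the angular coordinate of $z$ and set $R^i_k=\{z\in Q_i:2^{-k-1}h\le\max(1-|z|,|x-c|)<2^{-k}h\}$. Split $Q_1\times Q_2$, for $k\ge1$, into
\[
 R^1_k\times\bigsqcup_{l\ge k}R^2_l
 \quad\text{and}\quad
 \bigsqcup_{l>k}R^1_l\times R^2_k ,
\]
and treat $Q_2\times Q_1$ symmetrically. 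These pieces are separated at scale $2^{-k}h$ and have fixed shapes in coordinates centered at $c$ with that scale. The three-grid lemma places each one in a tent of length at most $12\cdot2^{-k}h$, with boundedly many pieces per tent.

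Two further points remain. First, you need a smooth extension with the principal branch controlled outside the disc; the paper's function $\Theta$ does this. Second, the piece index must be absorbed by reindexing $\mathbb Z^4$ with bounded distortion of $|\nu|$, not ``into the constant'': a sum of several separated products is not a single separated product.
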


\begin{proof}
We construct the pieces explicitly. The main point is a bound for six derivatives that is uniform over all tents. This gives one summable bound for all Fourier coefficients.

\smallskip\noindent\emph{1. Divide the kernel by size.}
Choose a smooth nonincreasing function $\varrho:[0,\infty)\to[0,1]$ such that
$\varrho=1$ on $[0,1]$ and $\varrho=0$ on $[2,\infty)$. Set
\[
 \psi(t)=\varrho(t)-\varrho(2t),\qquad
 k_0=6,\qquad L=32,\qquad
 \psi_{\rm root}(t)=1-\varrho(2^{k_0}t).
\]
Then $\psi\ge0$ and $\psi$ vanishes off $[1/2,2]$. A telescoping sum gives
\begin{equation}\label{eq:delta-partition}
 1=\psi_{\rm root}(t)+\sum_{k\ge k_0}\psi(2^kt)
 \qquad(t>0).
\end{equation}
Only a bounded number of terms in this sum are nonzero at a fixed $t$.

Write $\delta(z,\zeta)=|1-\overline\zeta z|$. If the term at scale $h=2^{-k}$ is nonzero, then
\begin{equation}\label{eq:delta-scale}
 \tfrac12h\le\delta(z,\zeta)\le2h,\qquad
 1-|z|\le2h,\qquad 1-|\zeta|\le2h.
\end{equation}
The last two inequalities follow from $1-|z||\zeta|\le\delta(z,\zeta)$.
Since $k\ge6$, both moduli exceed $1/2$.
Write $z=re^{2\pi ix}$ and $\zeta=\rho e^{2\pi iy}$, where $x,y\in\mathbb R/\mathbb Z$.
Let $d(x,y)=\min_{n\in\mathbb Z}|x-y-n|$ be circular distance.
For an angular difference $\theta\in[-\pi,\pi]$,
\[
 \delta^2=(1-r\rho)^2+2r\rho(1-\cos\theta)
 \ge\theta^2/\pi^2.
\]
Here we used $r\rho\ge1/4$ and $1-\cos\theta\ge2\theta^2/\pi^2$.
Thus
\begin{equation}\label{eq:depth-h}
 d(x,y)=|\theta|/(2\pi)\le\delta/2\le h.
\end{equation}

Choose a nonnegative smooth function $\chi$ supported in $(-1,1)$ such that
$\sum_{m\in\mathbb Z}\chi(t-m)=1$ for all $t\in\mathbb R$.
For example, divide a smooth bump, positive on $[-1/2,1/2]$, by the sum of its integer translates.
For $0\le m<2^k$, define the periodic function
\[
 \chi_{k,m}(x)=\sum_{\ell\in\mathbb Z}
       \chi(2^kx-m-\ell2^k).
\]
These functions sum to one on the circle. Each is supported where $d(x,mh)<h$.
At a fixed $x$, at most three are nonzero. For every integer $a\ge0$,
\[
 |\chi_{k,m}^{(a)}(x)|\le C_a h^{-a}.
\]
Now put
\[
 K_{k,m,n}(z,\zeta)
 =K_\eta(z,\zeta)\psi(2^k\delta(z,\zeta))
      \chi_{k,m}(x)\chi_{k,n}(y).
\]
Set this expression equal to zero if either point is the origin. The scale cutoff already vanishes when either modulus is at most $1/2$.
Using \eqref{eq:delta-partition} and the angular partition, we obtain
\begin{equation}\label{eq:smooth-decomp}
 K_\eta=K_\eta\psi_{\rm root}(\delta)
       +\sum_{k\ge k_0}\sum_{m,n=0}^{2^k-1}K_{k,m,n}.
\end{equation}

\smallskip\noindent\emph{2. Place each piece in one tent.}
Use the three grids obtained by rotations through $0,1/3,2/3$ of a fixed dyadic grid.
For a nonzero piece $K_{k,m,n}$, \eqref{eq:depth-h} gives
$d(mh,nh)\le3h$.
The two angular supports therefore lie in an arc of length at most $5h$.
Enlarge this arc by $h$ at each end. Call the resulting arc $J$; its length is at most $7h$.

Choose the dyadic length
\[
 H=Lh=32h=2^{-(k-5)}\le1/2.
\]
At this length, the boundary points of the three grids are spaced by $H/3$.
Since $|J|\le7h<H/3$, $J$ meets a boundary point of at most one grid.
In one of the other grids, it lies inside a single arc $I$ of length $H$.
Choose the first such grid in a fixed order. This assigns the piece to exactly one tent $Q=S(I)$.
The angular supports lie inside $I$. The depth bounds in \eqref{eq:delta-scale} are less than $H$.
Consequently the piece vanishes off $Q\times Q$.

Fix a grid and a nonroot tent $Q$ of height $H$. A piece assigned to $Q$ must have $h=H/L$.
There is therefore only one possible scale $k$.
Both angular centers $mh,nh$ belong to $I$. There are at most $L+2$ such centers in $I$.
Thus at most
\[
 M_0=(L+2)^2
\]
pieces can be assigned to $Q$.
Denote their pairs $(m,n)$ by $\mathcal P_{j,Q}$ and add these pieces:
\[
 K_Q^{(j)}=\sum_{(m,n)\in\mathcal P_{j,Q}}K_{k,m,n}.
\]
An empty sum is zero. Assign the root term in \eqref{eq:smooth-decomp} to the root of the first tree.
Set the root pieces in the other two trees equal to zero.
Each piece has now been assigned once, so
\[
 K_\eta=\sum_{j=1}^3\sum_{Q\in\T_j}K_Q^{(j)}.
\]
The exact measure formula in Lemma~\ref{lem:full-tree-sparse} gives
\begin{equation}\label{eq:hQ}
 \mueta(Q)=H^\alpha(2-H)^{\alpha-1},\qquad
 A_Q:=\mueta(Q)H^{-\alpha}=(2-H)^{\alpha-1}.
\end{equation}
In particular, $A_Q$ is bounded independently of $Q$.

\smallskip\noindent\emph{3. Use the same coordinates on every nonroot tent.}
Let $x_Q$ be the normalized angular midpoint of $I$. Write
\[
 z=(1-Hs)e^{2\pi i(x_Q+Ht)},\qquad
 \zeta=(1-Hs')e^{2\pi i(x_Q+Ht')},
\]
where $(s,t),(s',t')$ belong to the fixed rectangle
$\mathcal R=[0,1]\times[-1/2,1/2]$. We write $\mathcal R^2=\mathcal R\times\mathcal R$ for the set of pairs of these coordinates.
Then $1-\overline\zeta z=H D_H$, where
\[
 D_H(s,t,s',t')
 =\frac{1-(1-Hs)(1-Hs')e^{iHv}}{H},
 \qquad v=2\pi(t-t').
\]
Expanding the numerator gives
\[
 D_H=(s+s'-Hss')e^{iHv}+\frac{1-e^{iHv}}{H}.
\]
Since
\[
 e^{iHv}-1=iHv\int_0^1e^{iHvu}\,du,
\]
we obtain
\[
 D_H=(s+s'-Hss')e^{iHv}
       -iv\int_0^1e^{iHvu}\,du.
\]
The right side is smooth in all five real variables, including $H=0$.
At $H=0$, its value is $s+s'-iv$.
Compactness of $[0,1/2]\times[-2,2]^4$ therefore gives, for each
integer $N\ge0$,
\[
 \sup_{0<H\le1/2}\;
 \max_{|\gamma|\le N}\;
 \sup_{(s,t,s',t')\in[-2,2]^4}
 |\partial^\gamma D_H(s,t,s',t')|
 \le C_N.
\]
Here $\partial^\gamma$ denotes a partial derivative of total order
$|\gamma|$ in the four coordinate variables.
The constant $C_N$ is independent of $H$.
On $\mathcal R^2$, put $a=(1-Hs)(1-Hs')$. Then $0\le a\le1$, so
\[
 \operatorname{Re}D_H
 =\frac{1-a\cos(Hv)}{H}
 \ge\frac{1-a}{H}\ge0.
\]
This last inequality is asserted only on $\mathcal R^2$.

Let $F_Q^{(j)}$ be $\mueta(Q)K_Q^{(j)}$ in these coordinates.
This is a pointwise expression for the kernel.
No integration measure is changed here.
Since $H>0$, the principal branch gives
$(HD_H)^{-\alpha}=H^{-\alpha}D_H^{-\alpha}$ when $D_H\ne0$.
Also $H/h=L$ and $\mueta(Q)H^{-\alpha}=A_Q$.
Thus, on $\mathcal R^2$,
\[
 F_Q^{(j)}
 =A_QD_H^{-\alpha}\psi(L|D_H|)
    \sum_{(m,n)\in\mathcal P_{j,Q}}
       \chi_{k,m}(x_Q+Ht)\chi_{k,n}(x_Q+Ht').
\]
The product is defined to be zero where the scale cutoff vanishes.
Since $\psi$ is supported in $[1/2,2]$, a nonzero value requires
\[
 \frac1{2L}\le |D_H|\le\frac2L.
\]
Thus the negative power is evaluated away from zero, with a lower
bound independent of $H$.
The extension to the larger box will be defined in Step~4.

For every integer $a\ge0$, the chain rule and the bounds for
$\chi_{k,m}$ give
\[
 \left|\frac{d^a}{dt^a}\chi_{k,m}(x_Q+Ht)\right|
 =H^a\left|\chi_{k,m}^{(a)}(x_Q+Ht)\right|
 \le C_a(H/h)^a=C_aL^a.
\]
The same estimate holds for the factor in $t'$.
Since $L=32$ is fixed and the sum has at most $M_0$ terms,
these angular derivative bounds are independent of $Q$.

\smallskip\noindent\emph{4. Extend the formula before taking its Fourier series.}
We construct a smooth function whose support lies in the interior of a
larger box. Its periodic extension will then be smooth.
All derivatives of functions on $\mathbb C$ below are real partial
derivatives, with $\mathbb C$ identified with $\mathbb R^2$.

Choose a smooth function $\beta:\mathbb R\to[0,1]$ such that
$\beta=0$ on $(-\infty,-1/2]$ and $\beta=1$ on $[0,\infty)$.
For $w\notin(-\infty,0]$, set
\[
 \Theta(w)=\beta\!\left(\frac{\operatorname{Re}w}{|w|}\right)
              w^{-\alpha}\psi(L|w|),
\]
where $w^{-\alpha}$ uses the principal branch.
Set $\Theta(w)=0$ for $w\in(-\infty,0]$.
This defines a smooth function on $\mathbb C$.
Indeed, $\psi(L|w|)=0$ when $|w|<(2L)^{-1}$.
Near each point of the negative real axis other than zero,
$\operatorname{Re}w/|w|<-1/2$, so the factor $\beta$ is zero.
At every remaining point, all factors are smooth.
Moreover,
\[
 \operatorname{supp}\Theta
 \subset\{w\in\mathbb C:(2L)^{-1}\le|w|\le2/L\}.
\]
Thus every real partial derivative of $\Theta$ is bounded.
On the closed right half-plane,
$\Theta(w)=w^{-\alpha}\psi(L|w|)$, with the product defined as zero
near the origin.

Choose a fixed $\omega\in C_c^\infty((-2,2)^2)$ equal to one on
$\mathcal R$. Define
\begin{align*}
 \widetilde F_Q^{(j)}(s,t,s',t')
 ={}&A_Q\omega(s,t)\omega(s',t')
       \Theta(D_H(s,t,s',t'))\\
 &\times\sum_{(m,n)\in\mathcal P_{j,Q}}
       \chi_{k,m}(x_Q+Ht)\chi_{k,n}(x_Q+Ht').
\end{align*}
On $\mathcal R^2$, both factors $\omega$ equal one and
$\operatorname{Re}D_H\ge0$. Hence
$\widetilde F_Q^{(j)}=F_Q^{(j)}$ there.
Also,
\[
 \operatorname{supp}\widetilde F_Q^{(j)}
 \subset\operatorname{supp}\omega\times\operatorname{supp}\omega,
\]
a fixed compact subset of $(-2,2)^4$.

The chain rule expresses each derivative of $\Theta(D_H)$ of order
at most six as a finite sum of products.
Each product contains a derivative of $\Theta$ evaluated at $D_H$
and derivatives of $D_H$ of order at most six.
All these factors have bounds independent of $H$.
Step~3 gives the same uniform bounds for the angular factors.
The sum has at most $M_0$ terms, and $A_Q$ is bounded by
\eqref{eq:hQ}. The product rule therefore gives
\begin{equation}\label{eq:uniform-CM}
 \sup_{\substack{j,Q\\Q\ne\D}}
 \|\widetilde F_Q^{(j)}\|_{C^6([-2,2]^4)}
 \le C_\eta.
\end{equation}
Here the $C^6$ norm is the maximum of the supremum norms of all
partial derivatives of total order at most six.
The cutoff functions and $L=32$ are fixed.
Thus $C_\eta$ does not depend on the height or the position of $Q$.

For the root, use Cartesian coordinates.
For $w\notin(-\infty,0]$, define
\[
 \Theta_0(w)
 =\beta\!\left(\frac{\operatorname{Re}w}{|w|}\right)
    w^{-\alpha}\psi_{\rm root}(|w|),
\]
using the principal branch, and set $\Theta_0=0$ on $(-\infty,0]$.
This function is smooth on $\mathbb C$.
Indeed, $\psi_{\rm root}$ vanishes near zero, and $\beta$ vanishes
near every other point of the branch cut.

Take a fixed $\omega_0\in C_c^\infty((-2,2)^2)$ equal to one on
the closed unit disc.
For $x=(x_1,x_2)$ and $y=(y_1,y_2)$, write
$z=x_1+ix_2$ and $\zeta=y_1+iy_2$, and set
\[
 \widetilde F_{\D}^{(1)}(x,y)
 =\omega_0(x)\omega_0(y)\Theta_0(1-\overline\zeta z).
\]
On $\D\times\D$,
\[
 \operatorname{Re}(1-\overline\zeta z)
 \ge1-|z||\zeta|>0.
\]
Both factors $\omega_0$ equal one there.
Since $\mueta(\D)=1$, this formula agrees with the root piece.
Its support is contained in
$\operatorname{supp}\omega_0\times\operatorname{supp}\omega_0$.
It is therefore a fixed smooth function of compact support in
$(-2,2)^4$, and its $C^6$ norm is finite.
The root terms in the other two trees are zero.
After enlarging $C_\eta$, \eqref{eq:uniform-CM} holds with the
supremum taken over all $j,Q$, including the roots.

\smallskip\noindent\emph{5. Bound all Fourier coefficients by one sequence.}
Extend each $\widetilde F_Q^{(j)}$ periodically, with period four
in each coordinate.
The extension is smooth because the function vanishes near the
boundary of the box.
For $r,q\in\mathbb Z^2$, define
\[
 c_{Q,(r,q)}^{(j)}
 =4^{-4}\int_{[-2,2]^2}\int_{[-2,2]^2}
      \widetilde F_Q^{(j)}(x,y)
      e^{-i\pi(r\cdot x+q\cdot y)/2}\,dx\,dy.
\]
Here $x=(x_1,x_2)$ and $y=(y_1,y_2)$ are the coordinate variables.
The factor $4^{-4}$ is the reciprocal of the volume of the box.
These coefficients use ordinary Lebesgue measure.
The resulting pointwise identity will later be integrated with
respect to $\mueta$.

Put $u=(x_1,x_2,y_1,y_2)$ and
$\xi=(r_1,r_2,q_1,q_2)$.
If $\xi\ne0$, choose $\ell$ such that
$|\xi_\ell|=\max_{1\le m\le4}|\xi_m|$.
Since $\xi$ has integer entries, $|\xi_\ell|\ge1$.
Six integrations by parts give
\[
 c_{Q,(r,q)}^{(j)}
 =4^{-4}\left(\frac{2}{i\pi\xi_\ell}\right)^6
   \int_{[-2,2]^4}
      \partial_{u_\ell}^{\,6}\widetilde F_Q^{(j)}(u)
      e^{-i\pi\xi\cdot u/2}\,du.
\]
All boundary terms vanish.
By \eqref{eq:uniform-CM},
\[
 |c_{Q,(r,q)}^{(j)}|
 \le C_\eta|\xi_\ell|^{-6}.
\]
Also $|r|+|q|\le2\sqrt2\,|\xi_\ell|$, where $|r|$ and $|q|$
are Euclidean norms.
Thus
\begin{equation}\label{eq:fourier-decay}
 |c_{Q,(r,q)}^{(j)}|
 \le C_\eta(1+|r|+|q|)^{-6}
\end{equation}
for every nonzero frequency.
At the zero frequency, the defining integral is bounded by
$\|\widetilde F_Q^{(j)}\|_\infty$.
After enlarging $C_\eta$, \eqref{eq:fourier-decay} holds for all
$j,Q,r,q$.

For each integer $n\ge0$, there are at most $C2^{4n}$ pairs
$(r,q)\in\mathbb Z^2\times\mathbb Z^2$ satisfying
\[
 2^n\le1+|r|+|q|<2^{n+1}.
\]
Indeed, each of the four integer coordinates has absolute value
less than $2^{n+1}$.
Hence \eqref{eq:fourier-decay} gives
\begin{equation}\label{eq:fourier-l1}
 \sum_{r,q\in\mathbb Z^2}\sup_{j,Q}|c_{Q,(r,q)}^{(j)}|
 \le C_\eta\sum_{n\ge0}2^{4n}2^{-6n}<\infty.
\end{equation}
Thus we may take
$C_{(r,q)}=C_\eta(1+|r|+|q|)^{-6}$ in
\eqref{eq:l1coeff}.

Since the exponential factors have modulus one, the Fourier series
converges absolutely and uniformly.
Its sum is continuous.
Termwise integration shows that this sum and the periodic extension
of $\widetilde F_Q^{(j)}$ have the same Fourier coefficients.
Their difference is zero in $L^2([-2,2]^4)$ by completeness of the
trigonometric system.
Since the difference is continuous, it is zero everywhere.
Restricting to the original coordinate set gives
\begin{equation}\label{eq:fourier-series}
 F_Q^{(j)}(x,y)
 =\sum_{r,q\in\mathbb Z^2}c_{Q,(r,q)}^{(j)}
      e^{i\pi r\cdot x/2}e^{i\pi q\cdot y/2}.
\end{equation}
For a nonroot tent, this set is $\mathcal R\times\mathcal R$.
For the root, $F_{\D}^{(1)}$ is the restriction of its Cartesian
extension to $\D\times\D$.

\smallskip\noindent\emph{6. Return to the disc and sum the pieces.}
For a nonroot tent, let $X_Q(z)=(s,t)$ be the coordinates from
Step~3, defined for $z\in Q$.
For the root, set
$X_{\D}(z)=(\operatorname{Re}z,\operatorname{Im}z)$.
Recall that $\mathcal N=\mathbb Z^2\times\mathbb Z^2$.
For $\nu=(r,q)\in\mathcal N$, define on $Q$
\[
 a_{Q,\nu}^{(j)}(z)=e^{i\pi r\cdot X_Q(z)/2},\qquad
 b_{Q,\nu}^{(j)}(z)=e^{i\pi q\cdot X_Q(z)/2},
\]
and set both functions equal to zero off $Q$.
They are measurable and have modulus at most one.
Set all coefficients equal to zero for a tent with no assigned piece.

Since $F_Q^{(j)}=\mueta(Q)K_Q^{(j)}$ in these coordinates,
\eqref{eq:fourier-series} gives
\[
 K_Q^{(j)}(z,\zeta)
 =\frac1{\mueta(Q)}
   \sum_{\nu\in\mathcal N}
       c_{Q,\nu}^{(j)}
       a_{Q,\nu}^{(j)}(z)b_{Q,\nu}^{(j)}(\zeta).
\]
This identity holds on $Q\times Q$.
It also holds off $Q\times Q$, where both sides are zero.
For each fixed $Q$, the series converges absolutely and uniformly,
since its terms are bounded by $C_\nu/\mueta(Q)$.

Fix $z\in\D$ away from the angular boundaries.
If a tent of height $H$ contains $z$, then
$H\ge1-|z|>0$.
The possible heights are dyadic, so only finitely many satisfy
this inequality.
At each height, each grid has at most one tent containing $z$.
Thus only finitely many tents can contribute for a fixed pair
$(z,\zeta)$.
The number of these tents may depend on $z$; no uniform bound on
that number is needed.

We may therefore sum the preceding identities over the three trees.
The assignment in Step~2 counts each localized piece exactly once.
Equation~\eqref{eq:smooth-decomp} then gives
\eqref{eq:separated-kernel}.
The angular boundaries form a countable union of sets of
$\mueta$-measure zero.
Hence the identity holds for almost every pair $(z,\zeta)$.
This proves the theorem.
\end{proof}

\begin{remark}\label{rem:factorization-meaning}
The coefficient estimate is uniform over all tents. This is stronger than separate absolute convergence on each tent.
It also justifies integration of the full expansion. Indeed, \eqref{eq:fourier-l1} and the dyadic heights give, for almost every fixed $z\in\D$,
\begin{align}\label{eq:separated-absolute}
 \sum_{j,Q,\nu}\frac{|c_{Q,\nu}^{(j)}|
           |a_{Q,\nu}^{(j)}(z)b_{Q,\nu}^{(j)}(\zeta)|}{\mueta(Q)}
 \le C_\eta\sum_{j=1}^3\sum_{Q\in\T_j:\,z\in Q}\frac1{\mueta(Q)}
 \le C_\eta(1-|z|)^{-\alpha}.
\end{align}
For the last inequality, use $\mueta(Q)\ge H^\alpha$ and sum the finite geometric series over $H\ge1-|z|$.
The bound is independent of $\zeta$. Thus one may integrate the series term by term against any Bochner integrable function of $\zeta$.
The absolute kernel estimate in Proposition~\ref{prop:convex-bergman} leads to a different sparse form.
\end{remark}

\subsection{A layer estimate for every exponent}\label{sec:layer-tree}
Write $M_Ef=\mathbf1_Ef$ for a measurable set $E$. For scalar $a$, write $M_af=af$. These multipliers are contractions when $|a|\le1$.

\begin{lemma}[The tent-layer estimate]\label{lem:layer-tree}
Let $1<p<\infty$ and let $\T$ be a full dyadic tent tree. For each $Q$, let $Q_1,Q_2$ be its children and put $G_Q=Q\setminus(Q_1\cup Q_2)$. Suppose that $0<\sigma<1$ and
$\mueta(G_Q)\ge\sigma\mueta(Q)$ for every $Q\in\T$.
Assume that $W$ satisfies the standing assumptions and
\[
 A=\sup_{Q\in\T}\|E_Q\|_{L^p(W)\to L^p(W)}<\infty.
\]
For measurable scalar functions $a_Q,b_Q$ that vanish off $Q$ and satisfy $|a_Q|,|b_Q|\le1$, put
\[
 Tf=\sum_{Q\in\T}a_Q\fint_Q b_Qf\,d\mueta.
\]
Then
\begin{equation}\label{eq:layer-tree-bound}
 \|T\|_{L^p(W)\to L^p(W)}
 \le \frac{A}{(1-\rho_p^{1/p})(1-\rho_{p'}^{1/p'})}
 \le pp'\sigma^{-p-p'}A^{1+p+p'},
 \qquad \rho_q=1-(\sigma/A)^q.
\end{equation}
Here $q$ takes the values $p$ and $p'$. The estimate holds uniformly for every finite subcollection of $\T$. It is independent of dimension.
\end{lemma}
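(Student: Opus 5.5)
The plan is to prove the bound by duality, splitting each tent into its layers of top halves. The decisive input is a geometric decay of directional weight mass along generations, and it comes from testing $E_R$ on indicators of top halves. Since $U$ is an isometry from $L^p(W)$ onto $L^p(\mueta;\Hh)$, and the dual of $L^p(W)$ is the space with norm \eqref{eq:dualweight}, it suffices to bound $|\int_\D\langle Tf,g\rangle\,d\mueta|$ by the right side of \eqref{eq:layer-tree-bound} times $\|f\|_{L^p(W)}\|g\|_{p',W^\#}$. Here $T$ runs over a finite subcollection of $\T$, so every sum is finite.

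\emph{Step 1: decay on each tent.} Fix $R\in\T$ and $e\in\Hh$. Since $E_R(\mathbf1_{G_R}e)=\frac{\mueta(G_R)}{\mueta(R)}\mathbf1_Re$, the hypotheses give $\sigma\|\mathbf1_Re\|_{L^p(W)}\le A\|\mathbf1_{G_R}e\|_{L^p(W)}$. Hence
\[
 \int_{R\setminus G_R}\|Ue\|^p\,d\mueta\le\rho_p\int_R\|Ue\|^p\,d\mueta .
\]
For $Q\in\T$ and $k\ge0$, let $Q^{(k)}$ be the union of the generation-$k$ descendants of $Q$, and let $D_k(Q)=\bigcup\{G_R:R\subset Q\text{ of generation }k\text{ below }Q\}$. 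Iterating the display over generations gives $\|\mathbf1_{Q^{(k)}}e\|_{L^p(W)}^p\le\rho_p^k\|\mathbf1_Qe\|_{L^p(W)}^p$, and so the same bound for $D_k(Q)\subset Q^{(k)}$.

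The adjoint of $E_R$ under the pairing is the same kind of averaging operator on the dual space, with the same norm $A$. Repeating the argument with $U^{-1}$ in place of $U$ and $p'$ in place of $p$ gives $\|\mathbf1_{D_k(Q)}U^{-1}e\|_{L^{p'}}\le\rho_{p'}^{k/p'}\|\mathbf1_QU^{-1}e\|_{L^{p'}}$. Lemma~\ref{lem:full-tree-sparse} guarantees that the sets $D_k(Q)$, $k\ge0$, partition $Q$ up to a null set, since $\mueta(Q^{(k)})\le\kappa_\eta^k\mueta(Q)\to0$.

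\emph{Step 2: layer decomposition of the form.} Write $\mathbf1_Qf=\sum_k\mathbf1_{D_k(Q)}f$ and $\mathbf1_Qg=\sum_l\mathbf1_{D_l(Q)}g$. This splits the form into pieces
\[
 \Lambda_{k,l}(Q)=\mueta(Q)^{-1}\Bigl\langle\int_{D_k(Q)}b_Qf,\int_{D_l(Q)}\overline{a_Q}\,g\Bigr\rangle .
\]
Let $v$ be the weak average over $Q$ of $b_Q\mathbf1_{D_k(Q)}f$. Then $|\Lambda_{k,l}(Q)|=|\int_{D_l(Q)}\langle Uv,U^{-1}\overline{a_Q}g\rangle|$. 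Step 1 and the bound $\|E_Q\|\le A$ give
\[
 |\Lambda_{k,l}(Q)|\le\rho_p^{l/p}A\|\mathbf1_{D_k(Q)}f\|_{L^p(W)}\|\mathbf1_{D_l(Q)}g\|_{p',W^\#}.
\]
The symmetric argument, using the dual averaging operator, gives the same bound with $\rho_{p'}^{k/p'}$ in place of $\rho_p^{l/p}$. For fixed $k$ the sets $D_k(Q)$, $Q\in\T$, are pairwise disjoint: each $G_R$ lies in $D_k$ of exactly one ancestor. So for fixed $(k,l)$, H\"older's inequality for sums bounds $\sum_Q|\Lambda_{k,l}(Q)|$ by the decay factor times $A\|f\|_{L^p(W)}\|g\|_{p',W^\#}$.

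\emph{Step 3: summing over layers, the main obstacle.} The two estimates give $\min(\rho_p^{l/p},\rho_{p'}^{k/p'})\le\rho_p^{l/2p}\rho_{p'}^{k/2p'}$. Summing the double geometric series then proves a bound of the shape $A/\bigl((1-\rho_p^{1/2p})(1-\rho_{p'}^{1/2p'})\bigr)$. To reach exactly the stated constant $A/\bigl((1-\rho_p^{1/p})(1-\rho_{p'}^{1/p'})\bigr)$, I expect to need both decays in a single estimate of $\Lambda_{k,l}(Q)$. I would first try to factor $\mathbf1_{D_l(Q)}E_Q\mathbf1_{D_k(Q)}$ through $\Hh$, with the norm $\|\mathbf1_Q e\|_{L^p(W)}$ on the output side and the dual norm $\|\mathbf1_QU^{-1}e\|_{L^{p'}}$ on the input side. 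The difficulty is that the product of these two norms must be controlled by $A$ rather than by $A^2$.

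The last inequality in \eqref{eq:layer-tree-bound} is elementary. Since $1-(1-x)^{1/q}\ge x/q$ for $0\le x\le1$, we get $1-\rho_q^{1/q}\ge(\sigma/A)^q/q$ for $q=p,p'$. The estimate uses no finite-dimensional tool, so it is independent of dimension.
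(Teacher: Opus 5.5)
Your Steps 1 and 2 are correct and match the paper's layer decomposition (your $Q^{(k)}$ and $D_k(Q)$ are the paper's $D_k(Q)$ and $F_k(Q)$). However, the proposal does not prove the lemma as stated. What you actually establish is $\|T\|\le A/\bigl((1-\rho_p^{1/(2p)})(1-\rho_{p'}^{1/(2p')})\bigr)$. This is strictly larger than the first constant in \eqref{eq:layer-tree-bound} and yields only $4pp'\sigma^{-p-p'}A^{1+p+p'}$ instead of $pp'\sigma^{-p-p'}A^{1+p+p'}$. It would still give the exponent $\gamma_p$ in Theorem~\ref{thm:operator-main}, but it gives neither bound on $\|T\|$ in \eqref{eq:layer-tree-bound}. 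A better way of summing $\min(\rho_p^{l/p},\rho_{p'}^{k/p'})$ cannot fix this, because $\min(x,y)\ge xy$ for $x,y\in[0,1]$. Each block needs the \emph{product} of the two decays with a single factor $A$, and your Step~3 leaves exactly this open.

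The missing idea is to obtain the input-side decay as an operator bound by duality, using that $E_Q$ of any function is a constant vector on $Q$.
\begin{itemize}
\item Under the unweighted pairing, the adjoint of $E_QM_{D_k(Q)}$ on $X=L^p(W)$ is $M_{D_k(Q)}E_Q$ on the dual space. So the two operators have the same norm.
\item For $g$ in the dual space, write $E_Qg=\mathbf 1_Qw$ with $w\in\Hh$. Your dual decay for the fixed vector $w$ gives $\|\mathbf 1_{D_k(Q)}E_Qg\|_{p',W^\#}\le\rho_{p'}^{k/p'}\|E_Qg\|_{p',W^\#}\le A\rho_{p'}^{k/p'}\|g\|_{p',W^\#}$.
\item Hence $\|E_Q(b_Q\mathbf 1_{D_k(Q)}f)\|_{L^p(W)}\le A\rho_{p'}^{k/p'}\|\mathbf 1_{D_k(Q)}f\|_{L^p(W)}$.
\end{itemize}
Use this in your own first estimate, where you bounded the same quantity only by $A\|\mathbf 1_{D_k(Q)}f\|_{L^p(W)}$. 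You then get $|\Lambda_{k,l}(Q)|\le A\rho_p^{l/p}\rho_{p'}^{k/p'}\|\mathbf 1_{D_k(Q)}f\|_{L^p(W)}\|\mathbf 1_{D_l(Q)}g\|_{p',W^\#}$. Your H\"older step over $Q$ and the double geometric series then give exactly the stated constant.

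In the language of your factorization through $\Hh$, give the middle space the single norm $e\mapsto\|\mathbf 1_Qe\|_{L^p(W)}$, rather than a dual norm on the input side. The output map then costs $\rho_p^{l/p}$ and the input map costs $\|E_QM_{D_k(Q)}\|\le A\rho_{p'}^{k/p'}$, so no $A^2$ arises.

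Finally, you treat only finite subcollections, whereas the lemma also asserts the bound for $T$ over the whole tree. Add the limiting step: a tent containing $z$ has height at least $1-|z|$, so for almost every $z$ the generation-truncated sums $T_Nf(z)$ eventually equal $Tf(z)$. Fatou's lemma then transfers the uniform bound.
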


\begin{proof}
Put $U=W^{1/p}$ and write
\[
 X=L^p(W),\qquad Y=L^{p'}(W^\#),\qquad W^\#=W^{-p'/p}.
\]
By \eqref{eq:dualweight}, $Y$ is the dual space of $X$ under the pairing
$\int_\D\langle f,g\rangle\,d\mueta$.
The norms use $U$ on $X$ and $U^{-1}$ on $Y$.
For a nonzero $e\in\Hh$, the function $\mathbf1_Qe$ belongs to $X$ and is fixed by $E_Q$.
Thus $A\ge1$ and $0<\rho_p,\rho_{p'}<1$.

\smallskip\noindent\emph{1. Estimate the mass left at each step.}
For $f\in X$ and $g\in Y$, set
$v_f=\fint_Qf\,d\mueta$ and $v_g=\fint_Qg\,d\mueta$.
These weak averages exist by Section~\ref{sec:prelim-scalar}.
Their definition gives
\[
 \int_\D\langle E_Qf,g\rangle\,d\mueta
 =\mueta(Q)\langle v_f,v_g\rangle
 =\int_\D\langle f,E_Qg\rangle\,d\mueta.
\]
Thus the dual operator of $E_Q$ is the same averaging map on $Y$.
In particular, $\|E_Q\|_{Y\to Y}=\|E_Q\|_{X\to X}\le A$.
This identity uses the unweighted pairing displayed above.

For a measurable $E\subset Q$ and $e\in\Hh$,
\[
 E_Q(\mathbf1_Ee)=\frac{\mueta(E)}{\mueta(Q)}\mathbf1_Qe.
\]
The standing assumptions make the following integrals finite. The bound
$\|E_Q\|_{X\to X}\le A$ gives
\begin{align*}
 \left(\frac{\mueta(E)}{\mueta(Q)}\right)^p
       \int_Q\|Ue\|^p\,d\mueta
 &=\|E_Q(\mathbf1_Ee)\|_X^p\\
 &\le A^p\|\mathbf1_Ee\|_X^p
 =A^p\int_E\|Ue\|^p\,d\mueta.
\end{align*}
Divide by $A^p$ and rearrange. We obtain
\begin{equation}\label{eq:layer-mass}
 \int_E\|Ue\|^p\,d\mueta
 \ge A^{-p}\left(\frac{\mueta(E)}{\mueta(Q)}\right)^p
       \int_Q\|Ue\|^p\,d\mueta.
\end{equation}
Applying the same argument on $Y$ gives this inequality with $U^{-1}$ and $p'$.
For $E=G_Q$, the measure assumption therefore yields
\[
 \int_{Q\setminus G_Q}\|Ue\|^p\,d\mueta
 \le\rho_p\int_Q\|Ue\|^p\,d\mueta,
 \qquad \rho_p=1-(\sigma/A)^p.
\]
The analogous bound for $U^{-1}$ has factor $\rho_{p'}$.

\smallskip\noindent\emph{2. Define the layers and check disjointness.}
All disjointness statements below are understood up to null sets.
Let $\mathcal C_j(Q)$ be the descendants exactly $j$ generations below $Q$.
Define
\[
 D_j(Q)=\bigcup_{R\in\mathcal C_j(Q)}R,\qquad
 D_0(Q)=Q,\qquad F_j(Q)=D_j(Q)\setminus D_{j+1}(Q).
\]
The children of the sets in $\mathcal C_j(Q)$ form $D_{j+1}(Q)$.
Since the sets in $\mathcal C_j(Q)$ are disjoint, the preceding bound gives
\begin{align*}
 \int_{D_{j+1}(Q)}\|Ue\|^p\,d\mueta
 &=\sum_{R\in\mathcal C_j(Q)}
        \int_{R\setminus G_R}\|Ue\|^p\,d\mueta\\
 &\le\rho_p\sum_{R\in\mathcal C_j(Q)}
        \int_R\|Ue\|^p\,d\mueta
 =\rho_p\int_{D_j(Q)}\|Ue\|^p\,d\mueta.
\end{align*}
Start with $D_0(Q)=Q$ and repeat this inequality. Apply the same argument to $U^{-1}$ with exponent $p'$. We obtain
\begin{align}
 \int_{D_j(Q)}\|Ue\|^p\,d\mueta
 &\le\rho_p^j\int_Q\|Ue\|^p\,d\mueta,\label{eq:layer-decay-p}\\
 \int_{D_k(Q)}\|U^{-1}e\|^{p'}\,d\mueta
 &\le\rho_{p'}^k\int_Q\|U^{-1}e\|^{p'}\,d\mueta.\label{eq:layer-decay-dual}
\end{align}
These estimates concern a fixed vector $e$. We will apply them to the constant value of an average.

We have
\[
 F_j(Q)=\bigcup_{R\in\mathcal C_j(Q)}G_R,
 \qquad \mueta(D_j(Q))\le(1-\sigma)^j\mueta(Q).
\]
Thus the sets $F_j(Q)$, for $j\ge0$, partition $Q$ up to a null set.
We also need a second disjointness fact: for a fixed $j$, the sets $F_j(Q)$ are pairwise disjoint as $Q$ varies over the whole tree.
Indeed, all top halves $G_R$ are pairwise disjoint. Each $R$ has at most one ancestor exactly $j$ generations above it. Hence the same top half cannot occur in both $F_j(Q)$ and $F_j(Q')$ when $Q\ne Q'$.

\smallskip\noindent\emph{3. Estimate one pair of layers.}
For $j,k\ge0$, put
\[
 B_{Q,j,k}=M_{F_j(Q)}M_{a_Q}E_QM_{b_Q}M_{F_k(Q)}.
\]
First restrict the argument of $E_Q$ to $F_k(Q)$.
For $g\in Y$, the function $E_Qg$ is constant on $Q$ and zero elsewhere.
Since $F_k(Q)\subset D_k(Q)$, \eqref{eq:layer-decay-dual} gives
\[
 \|M_{F_k(Q)}E_Qg\|_Y
 \le\rho_{p'}^{k/p'}\|E_Qg\|_Y
 \le A\rho_{p'}^{k/p'}\|g\|_Y.
\]
Use the dual identity from Step 1 and weighted H\"older's inequality. For $h\in X$,
\begin{align*}
 \|E_QM_{F_k(Q)}h\|_X
 &=\sup_{\|g\|_Y\le1}
      \left|\int_\D\langle h,M_{F_k(Q)}E_Qg\rangle\,d\mueta\right|\\
 &\le A\rho_{p'}^{k/p'}\|h\|_X.
\end{align*}
Now set $h=M_{F_k(Q)}f$ and $v=E_QM_{b_Q}h$.
Because $h$ vanishes off $F_k(Q)$ and $b_Q$ is scalar,
\[
 v=E_QM_{F_k(Q)}M_{b_Q}h,\qquad
 \|v\|_X\le A\rho_{p'}^{k/p'}\|h\|_X.
\]
Write $v=\mathbf1_Qe$ for its constant value $e\in\Hh$.
Using $|a_Q|\le1$ and \eqref{eq:layer-decay-p}, we get
\[
 \|M_{F_j(Q)}M_{a_Q}v\|_X^p
 \le\int_{F_j(Q)}\|Ue\|^p\,d\mueta
 \le\rho_p^j\|v\|_X^p.
\]
Consequently
\begin{equation}\label{eq:layer-block}
 \|B_{Q,j,k}f\|_X
 \le A\rho_p^{j/p}\rho_{p'}^{k/p'}
       \|M_{F_k(Q)}f\|_X.
\end{equation}
There is only one factor $A$. The second restriction uses the constant value $e$ and adds only the decay factor.

\smallskip\noindent\emph{4. Sum over a finite family.}
Let $\mathcal F\subset\T$ be finite and put
$T_{j,k}^{\mathcal F}=\sum_{Q\in\mathcal F}B_{Q,j,k}$.
For fixed $j$, the values of these summands lie on disjoint sets $F_j(Q)$.
Equation~\eqref{eq:layer-block} therefore gives
\begin{align*}
 \|T_{j,k}^{\mathcal F}f\|_X^p
 &=\sum_{Q\in\mathcal F}\|B_{Q,j,k}f\|_X^p\\
 &\le A^p\rho_p^j\rho_{p'}^{kp/p'}
        \sum_{Q\in\mathcal F}\|M_{F_k(Q)}f\|_X^p\\
 &\le A^p\rho_p^j\rho_{p'}^{kp/p'}\|f\|_X^p.
\end{align*}
The last step uses disjointness for the fixed layer number $k$.
Hence the double series converges in operator norm, and
\[
 \sum_{j,k\ge0}\|T_{j,k}^{\mathcal F}\|_{X\to X}
 \le\frac{A}{(1-\rho_p^{1/p})(1-\rho_{p'}^{1/p'})}.
\]

We next identify its sum. Put
\[
 P_{Q,J}=\sum_{j=0}^JM_{F_j(Q)}
        =M_{Q\setminus D_{J+1}(Q)},\qquad
 S_Q=M_{a_Q}E_QM_{b_Q}.
\]
For every $f\in X$, dominated convergence gives
\[
 \|(M_Q-P_{Q,J})f\|_X^p
 =\int_{D_{J+1}(Q)}\|Uf\|^p\,d\mueta\longrightarrow0.
\]
Also $S_Q=M_QS_QM_Q$ and $\|S_Q\|\le A$.
Thus
\begin{align*}
 \|P_{Q,J}S_QP_{Q,K}f-S_Qf\|_X
 &\le A\|(P_{Q,K}-M_Q)f\|_X\\
 &\quad+\|(P_{Q,J}-M_Q)S_Qf\|_X
 \longrightarrow0.
\end{align*}
But the expression $P_{Q,J}S_QP_{Q,K}$ is precisely
$\sum_{j=0}^J\sum_{k=0}^KB_{Q,j,k}$.
Sum over the finite family $\mathcal F$. The double series therefore equals $\sum_{Q\in\mathcal F}S_Q$.
This proves the first bound in \eqref{eq:layer-tree-bound} for every finite family.

\smallskip\noindent\emph{5. Pass to the whole tree and compute the power.}
Let $T_N$ be the sum over generations $0,\ldots,N$.
For a fixed $z\in\D$, a tent containing $z$ has height at least $1-|z|$.
Only finitely many generations can contain that point. Thus $T_Nf(z)$ is eventually equal to the displayed sum defining $Tf(z)$, for almost every $z$.
Fatou's lemma applied to $\|U(z)T_Nf(z)\|^p$ gives the same norm bound for $T$.
This step requires no operator-norm convergence of the generation sums.

For $0\le x\le1$ and $q>1$, concavity gives
\[
 1-(1-x)^{1/q}\ge x/q.
\]
Taking $x=(\sigma/A)^q$ yields
\[
 \frac1{1-\rho_q^{1/q}}\le q\sigma^{-q}A^q.
\]
Use this estimate with $q=p$ and $q=p'$. Multiplying the two factors and the initial factor $A$ gives
$pp'\sigma^{-p-p'}A^{1+p+p'}$.
Every constant used above depends only on $p$ and $\sigma$.
\end{proof}

\begin{proof}[\textbf{Proof of Theorem~\ref{thm:operator-main}}]
Write $X=L^p(W)$ and $U=W^{1/p}$.
If $P_\eta$ is bounded, Proposition~\ref{prop:matrix-necessity} gives
\[
 B^{1/p}=\sup_S\|E_S\|_{X\to X}
 \le C_\eta\|P_\eta\|_{X\to X}.
\]
This proves necessity and the lower bound.
We prove sufficiency assuming $B<\infty$. Put $A=B^{1/p}\ge1$.

\smallskip\noindent\emph{1. Bound each tree operator.}
Use Theorem~\ref{thm:separated-factorization}.
For each $\nu$ with $C_\nu>0$, define
\[
 \widehat a_{Q,\nu}^{(j)}
 =\frac{c_{Q,\nu}^{(j)}}{C_\nu}a_{Q,\nu}^{(j)},
 \qquad
 R_{j,\nu}
 =\sum_{Q\in\T_j}M_{\widehat a_{Q,\nu}^{(j)}}
             E_QM_{b_{Q,\nu}^{(j)}}.
\]
Both scalar factors have modulus at most one and vanish off $Q$.
Every averaging operator in the tree has norm at most $A$.
By Lemma~\ref{lem:full-tree-sparse}, all three trees have the same positive lower bound $\sigma_\eta$ for the relative measure of $G_Q$.
Lemma~\ref{lem:layer-tree} therefore gives
\[
 \|R_{j,\nu}\|_{X\to X}\le C_{p,\eta}A^{1+p+p'}
\]
uniformly in $j,\nu$ and independently of dimension.
If $C_\nu=0$, all corresponding coefficients are zero, so that index may be omitted.

\smallskip\noindent\emph{2. Sum in operator norm.}
Since $\sum_\nu C_\nu<\infty$, the series
\[
 S=\sum_{j=1}^3\sum_{\nu:C_\nu>0}C_\nu R_{j,\nu}
\]
converges in the space of bounded operators on $X$.
Its norm satisfies
\[
 \|S\|_{X\to X}
 \le3C_{p,\eta}A^{1+p+p'}\sum_\nu C_\nu
 \le C_{p,\eta}B^{\gamma_p},
\]
because
\[
 \frac{1+p+p'}p=1+\frac1p+\frac1{p-1}=\gamma_p.
\]
The coefficient sum depends only on $\eta$ and the fixed construction in Theorem~\ref{thm:separated-factorization}.

\smallskip\noindent\emph{3. Identify $S$ with the kernel operator.}
Take $f\in\mathscr D_W$. It is compactly supported and Bochner integrable by Section~\ref{sec:prelim-scalar}.
Choose increasing finite sets $\mathcal N_m$ whose union is $\mathcal N$, and let $S_m$ be the corresponding partial sums defining $S$.
For almost every fixed $z$, \eqref{eq:separated-absolute} gives
\begin{align*}
 &\sum_{j,Q,\nu}\frac{|c_{Q,\nu}^{(j)}|
            |a_{Q,\nu}^{(j)}(z)|}{\mueta(Q)}
       \int_Q|b_{Q,\nu}^{(j)}(\zeta)|\,\|f(\zeta)\|\,d\mueta(\zeta)\\
 &\qquad\le C_\eta(1-|z|)^{-\alpha}
       \int_\D\|f(\zeta)\|\,d\mueta(\zeta)<\infty.
\end{align*}
This permits termwise integration of the kernel expansion.
Equation~\eqref{eq:separated-kernel} therefore gives
$S_mf(z)\to P_\eta f(z)$ almost everywhere.

On the other hand, $S_mf\to Sf$ in $X$ by operator-norm convergence.
Equivalently, $US_mf\to USf$ in ordinary $L^p(\mueta;\Hh)$.
There is a subsequence converging almost everywhere in $\Hh$.
Since $U(z)^{-1}$ is bounded at almost every fixed $z$, this subsequence also satisfies $S_mf(z)\to Sf(z)$ there.
Comparison with the preceding pointwise limit gives $Sf=P_\eta f$ almost everywhere.

Thus $S$ extends the kernel operator from the dense class $\mathscr D_W$ to $X$.
The extension is unique, and its norm has the required upper bound.
The two bounds prove the theorem.
\end{proof}

\subsection{The Hilbert exponent}\label{sec:hilbert-bound}
The following argument adapts the almost-orthogonality method of \cite[Theorem~2.1]{LimaniPott21} to the full tent tree. We give the estimates because the children of a tent do not cover their parent.

\begin{lemma}\label{lem:hilbert-tree}
Let $\T$ be a full dyadic tent tree and let $W$ satisfy the standing assumptions for $p=2$. Suppose
\[
 B_\T:=\sup_{Q\in\T}\|E_Q\|_{L^2(W)\to L^2(W)}^2<\infty.
\]
For measurable scalar functions $a_Q,b_Q$ supported in $Q$ with $|a_Q|,|b_Q|\le1$, the operator
\[
 Tf=\sum_{Q\in\T}a_Q\fint_Q b_Qf\,d\mueta
\]
is bounded on $L^2(W)$, with $\|T\|\le C_\eta B_\T^{3/2}$. The same bound holds for every finite subcollection of $\T$. The constant is independent of $\dim\Hh$.
\end{lemma}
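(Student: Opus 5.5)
The plan is to reduce the lemma, by duality and the $p=2$ identity \eqref{eq:B2op-intro}, to two dimension-free weighted Carleson embeddings, as in the almost-orthogonality method of \cite[Theorem~2.1]{LimaniPott21}. By Fatou's lemma, as in Step~5 of Lemma~\ref{lem:layer-tree}, it suffices to bound finite subcollections $\mathcal F\subset\T$.

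\textbf{Duality reduction.} Write $f=W^{-1/2}F$ and $g=W^{1/2}G$ with $F,G\in L^2(\mueta;\Hh)$, so that $\|f\|_{L^2(W)}=\|F\|$ and $\|g\|_{L^2(W^{-1})}=\|G\|$. Then
\[
 \int_\D\langle Tf,g\rangle\,d\mueta
 =\sum_{Q\in\mathcal F}\mueta(Q)\,
 \Bigl\langle \fint_Q b_QW^{-1/2}F\,d\mueta,\ \fint_Q\overline{a_Q}W^{1/2}G\,d\mueta\Bigr\rangle .
\]
The weak averages $\langle W\rangle_Q$ and $\langle W^{-1}\rangle_Q$ exist and are invertible by Section~\ref{sec:prelim-scalar}. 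Insert $\langle W^{-1}\rangle_Q^{-1/2}\langle W^{-1}\rangle_Q^{1/2}$ between the two averages. Then use
\[
 \|\langle W^{-1}\rangle_Q^{1/2}v\|\le\|\langle W^{-1}\rangle_Q^{1/2}\langle W\rangle_Q^{1/2}\|\,\|\langle W\rangle_Q^{-1/2}v\|\le B_\T^{1/2}\|\langle W\rangle_Q^{-1/2}v\|,
\]
where the last step is \eqref{eq:B2op-intro}. Applying Cauchy--Schwarz in $Q$, the lemma follows from the two embeddings
\[
 \sum_{Q}\mueta(Q)\Bigl\|\langle W^{-1}\rangle_Q^{-1/2}\fint_Q b_QW^{-1/2}F\Bigr\|^2\le C_\eta B_\T\|F\|^2,
 \qquad
 \sum_{Q}\mueta(Q)\Bigl\|\langle W\rangle_Q^{-1/2}\fint_Q \overline{a_Q}W^{1/2}G\Bigr\|^2\le C_\eta B_\T\|G\|^2 .
\]
The two are symmetric under $W\leftrightarrow W^{-1}$, so only the first needs proof. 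The total is then $B_\T^{1/2}\cdot B_\T^{1/2}\cdot B_\T^{1/2}=B_\T^{3/2}$.

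\textbf{The embedding.} Three observations reduce the first embedding to the unimodular case. The scalar factor $b_Q$ is harmless: writing $b_Q=\sum$ of real and imaginary, positive and negative parts, each with values in $[0,1]$, costs a constant. Also, $b_QF$ has $L^2$ norm at most $\|F\|$ on each $Q$. Third, the embedding is really a statement about the operator-valued measure $W^{-1}\,d\mueta$. Testing against a unit vector $e$ gives
\[
 \fint_Q\|W^{-1/2}\langle W^{-1}\rangle_Q^{-1/2}e\|^2\,d\mueta=1,
\]
so each single term is bounded by $\mueta(Q)\fint_Q\|F\|^2$.

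Summability must come from the tree structure. Here I would run the Limani--Pott stopping-time argument. Inside a fixed $Q$, stop at the maximal descendants $R$ where $\langle W^{-1}\rangle_R\not\le C_0B_\T\langle W^{-1}\rangle_Q$ in a suitable directional sense, or where an averaged quantity of $F$ jumps. The bound on $\|E_R\|$, through \eqref{eq:layer-mass} with $E=G_R$, shows the stopping tents have total measure at most $\tfrac12\mueta(Q)$. Lemma~\ref{lem:full-tree-sparse} then makes the stopping family sparse, with the disjoint top halves $G_Q$. Between stopping times, compare $\langle W^{-1}\rangle_R^{-1/2}$ with $\langle W^{-1}\rangle_Q^{-1/2}$ at cost $B_\T^{1/2}$. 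This turns the sum over $R$ into an unweighted, dimension-free dyadic Carleson/Doob estimate for $\langle W^{-1}\rangle_Q^{-1/2}W^{-1/2}F$, which has $L^2(Q)$-norm controlled by $\|F\|$.

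\textbf{Main obstacle.} The hard step is this last embedding with constant linear in $B_\T$ and independent of $\dim\Hh$. Two difficulties arise. First, the children of a tent do not cover it, so the martingale identities of the Euclidean proof fail. Second, no reverse H\"older inequality is available. I would handle the first by adding the top halves $G_Q$ as extra ``leaf'' cells, which yields a genuine filtration, and treating $\mathbf 1_{G_Q}$ terms separately through the geometric decay of Lemma~\ref{lem:layer-tree}. If the linear constant proves elusive, I would first aim for $C_\eta B_\T^{1+\epsilon}$ and then refine the stopping threshold.
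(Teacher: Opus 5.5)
\textbf{There is a genuine gap: the weighted embedding, which carries all the content of the lemma, is not proved.}

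Your duality reduction is correct. Identity \eqref{eq:B2op-intro} holds for the weak averages on any separable $\Hh$, so $\|\langle W^{-1}\rangle_Q^{1/2}\langle W\rangle_Q^{1/2}\|\le B_\T^{1/2}$. Cauchy--Schwarz then reduces the lemma to your two embeddings with constant $C_\eta B_\T$. You flag this step yourself, and the fallback $B_\T^{1+\epsilon}$ would only give $B_\T^{3/2+\epsilon}$.

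The stopping-time sketch fails exactly where dimension enters. Inequality \eqref{eq:layer-mass} concerns one fixed vector $e$. A rule ``$\langle W^{-1}\rangle_R\not\le C_0B_\T\langle W^{-1}\rangle_Q$ in some direction'' is triggered by a direction that depends on $R$. Nothing then bounds the total measure of the stopping tents by $\frac12\mueta(Q)$ uniformly in $\dim\Hh$. Standard matrix stopping arguments control this through a trace, at a cost depending on $d$. Moreover, an upper bound $\langle W^{-1}\rangle_R\lesssim\langle W^{-1}\rangle_Q$ points the wrong way for replacing $\langle W^{-1}\rangle_R^{-1/2}$ by $\langle W^{-1}\rangle_Q^{-1/2}$. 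That replacement needs a lower bound on $\langle W^{-1}\rangle_R$.

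\textbf{The missing idea is that no stopping time is needed.} The decay \eqref{eq:hilbert-decay}, applied to one fixed vector per tent, gives almost orthogonality directly.
\begin{itemize}
\item Take a finite family, put $U=W^{1/2}$ and $u_Q=\langle W^{-1}\rangle_Q^{-1/2}x_Q$. The adjoint of your first embedding map is $x\mapsto\sum_Q\mueta(Q)^{-1/2}\overline{b_Q}\mathbf 1_QU^{-1}u_Q$.
\item In its squared $L^2$ norm, each diagonal term is at most $\|x_Q\|^2$.
\item If $R$ lies exactly $k\ge1$ generations below $Q$, Cauchy--Schwarz bounds the cross term by $\mueta(Q)^{-1/2}(\int_R\|U^{-1}u_Q\|^2\,d\mueta)^{1/2}\|x_R\|$.
\item Sum over these $R$ with Cauchy--Schwarz in $R$, and apply \eqref{eq:hilbert-decay} with $e=u_Q$. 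This gives at most $\rho^{k/2}\|x_Q\|(\sum_R\|x_R\|^2)^{1/2}$, where $\sigma=1-\kappa_\eta$ and $\rho=1-\sigma^2/B_\T$.
\item Each $R$ has exactly one ancestor $k$ generations up. Hence the embedding constant is at most $(1+\rho^{1/2})/(1-\rho^{1/2})\le4\sigma^{-2}B_\T$. The same holds for the $W$-embedding.
\end{itemize}
Your route then gives $\|T\|\le4\sigma^{-2}B_\T^{3/2}$, independent of dimension.

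This completed argument is a Schur-type bound for two embeddings. The paper instead applies the Cotlar--Stein lemma to the generation operators $L_n$. Both rest on the same fixed-vector decay.
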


\begin{proof}
Write $B=B_\T$, $U=W^{1/2}$, and
\[
 X=L^2(W),\qquad Y=L^2(W^{-1}),\qquad
 \mathcal L=L^2(\mueta;\Hh).
\]
All unmarked operator norms below are on $\mathcal L$.
The function $\mathbf1_Qe$, with $e\ne0$, belongs to $X$ and is fixed by $E_Q$.
Thus $B\ge1$.

\smallskip\noindent\emph{1. Estimate the mass on descendants.}
The dual identity in the proof of Lemma~\ref{lem:layer-tree} gives
\[
 \|E_Q\|_{Y\to Y}=\|E_Q\|_{X\to X}\le\sqrt B.
\]
Hence $W^{-1}$ has the same averaging characteristic on the tree.
Equation~\eqref{eq:layer-mass}, with $p=2$ and $A=\sqrt B$, gives
\begin{equation}\label{eq:hilbert-mass}
 \int_E\|U^{\pm1}e\|^2\,d\mueta
 \ge\frac1B\left(\frac{\mueta(E)}{\mueta(Q)}\right)^2
       \int_Q\|U^{\pm1}e\|^2\,d\mueta
 \qquad(E\subset Q,\ e\in\Hh).
\end{equation}
The two signs denote two separate inequalities.
Let $\kappa_\eta$ be as in Lemma~\ref{lem:full-tree-sparse}, and put
\[
 \sigma=1-\kappa_\eta\in(0,1),\qquad
 \rho=1-\sigma^2/B\in(0,1).
\]
For the top half $G_Q$, we have $\mueta(G_Q)\ge\sigma\mueta(Q)$.
Taking $E=G_Q$ in \eqref{eq:hilbert-mass} and subtracting from the integral over $Q$ gives
\[
 \int_{Q\setminus G_Q}\|U^{\pm1}e\|^2\,d\mueta
 \le\rho\int_Q\|U^{\pm1}e\|^2\,d\mueta.
\]
Let $D_k(Q)$ be the union of the descendants exactly $k$ generations below $Q$, with $D_0(Q)=Q$.
Sum the last inequality over each generation and iterate. This gives
\begin{equation}\label{eq:hilbert-decay}
 \int_{D_k(Q)}\|U^{\pm1}e\|^2\,d\mueta
 \le\rho^k\int_Q\|U^{\pm1}e\|^2\,d\mueta.
\end{equation}
Here $e$ is a fixed vector. We shall use this estimate on the constant value of an average.

\smallskip\noindent\emph{2. Define the conjugated operators and their adjoints.}
The maps
\[
 J_+:X\to\mathcal L,\quad J_+f=Uf,
 \qquad J_-:Y\to\mathcal L,\quad J_-g=U^{-1}g
\]
are onto isometries. For each tent, define
\[
 S_Q=M_{a_Q}E_QM_{b_Q}\quad\hbox{on }X,
 \qquad L_Q=J_+S_QJ_+^{-1}\quad\hbox{on }\mathcal L.
\]
Scalar multiplication commutes with $U$ and is contractive when its modulus is at most one.
It follows that $\|L_Q\|\le\sqrt B$ and $L_Q=M_QL_QM_Q$.

We verify the adjoint on $\mathcal L$ by pairing.
For $F,H\in\mathcal L$, put $f=U^{-1}F\in X$ and $g=UH\in Y$.
The weak-average identity gives
\begin{align*}
 \langle L_QF,H\rangle_{\mathcal L}
 &=\int_\D\langle S_Qf,g\rangle\,d\mueta\\
 &=\int_\D\langle f,
      M_{\overline{b_Q}}E_QM_{\overline{a_Q}}g\rangle\,d\mueta\\
 &=\left\langle F,
      J_-M_{\overline{b_Q}}E_QM_{\overline{a_Q}}J_-^{-1}H
      \right\rangle_{\mathcal L}.
\end{align*}
Thus
\begin{equation}\label{eq:hilbert-adjoint}
 L_Q^*=J_-M_{\overline{b_Q}}E_QM_{\overline{a_Q}}J_-^{-1}.
\end{equation}
The middle three maps act on $Y$ and have product norm at most $\sqrt B$.
This argument does not assume that multiplication by $U$ or $U^{-1}$ is bounded on $\mathcal L$ itself.
More explicitly, on $Q$ these operators have the form
\begin{align*}
 L_QF(z)&=a_Q(z)U(z)\fint_Q b_QU^{-1}F\,d\mueta,\\
 L_Q^*H(z)&=\overline{b_Q(z)}U(z)^{-1}
             \fint_Q\overline{a_Q}UH\,d\mueta.
\end{align*}
Both averages are weak averages. Each is a fixed vector on $Q$.
This is why the directional decay estimate applies in the next step.

\smallskip\noindent\emph{3. Compare two generations.}
Let $\T_n$ be generation $n$ of the tree, starting with the root at $n=0$, and put
\[
 T_n=\sum_{Q\in\T_n}S_Q,\qquad
 L_n=J_+T_nJ_+^{-1}=\sum_{Q\in\T_n}L_Q.
\]
The sets in $\T_n$ are pairwise disjoint up to null sets.
Both $L_Q$ and $L_Q^*$ are supported in $Q$ and depend only on the restriction to $Q$.
Therefore, for $H\in\mathcal L$,
\[
 \|L_n^*H\|_2^2
 =\sum_{Q\in\T_n}\|L_Q^*M_QH\|_2^2
 \le B\sum_{Q\in\T_n}\|M_QH\|_2^2
 \le B\|H\|_2^2.
\]
The same calculation applies to $L_n$. In particular, $\|L_n\|\le\sqrt B$.

Fix $n>m$ and $F\in\mathcal L$.
For $R\in\T_m$, let
\[
 e_R=\fint_R b_RU^{-1}F\,d\mueta.
\]
This weak average exists by Section~\ref{sec:prelim-scalar}.
On $R$, we have $L_mF=a_RUe_R$.
Each tent in $\T_n$ lies in a unique $R\in\T_m$.
Consequently, $|a_R|\le1$ and \eqref{eq:hilbert-decay} give
\begin{align*}
 \|L_n^*L_mF\|_2^2
 &\le B\sum_{Q\in\T_n}\|M_QL_mF\|_2^2\\
 &=B\sum_{R\in\T_m}\int_{D_{n-m}(R)}
                   |a_R|^2\|Ue_R\|^2\,d\mueta\\
 &\le B\rho^{n-m}\sum_{R\in\T_m}
                      \|\mathbf1_RUe_R\|_2^2.
\end{align*}
We now estimate $\sum_{R\in\T_m}\|\mathbf1_RUe_R\|_2^2$.
The local averaging bound gives
\[
 \|\mathbf1_RUe_R\|_2
 =\|E_RM_{b_R}U^{-1}M_RF\|_X
 \le\sqrt B\|M_RF\|_2.
\]
The sets $R\in\T_m$ are disjoint, so
\[
 \|L_n^*L_mF\|_2^2\le B^2\rho^{n-m}\|F\|_2^2.
\]
Taking square roots gives $\|L_n^*L_m\|\le B\rho^{(n-m)/2}$.

By \eqref{eq:hilbert-adjoint}, the sequence $L_n^*$ has the same form with weight $W^{-1}$ and scalar factors $\overline{b_Q},\overline{a_Q}$.
The averaging bound and the decay factor $\rho$ are unchanged by Step 1.
Apply the preceding calculation to this sequence. It gives
$\|L_nL_m^*\|\le B\rho^{(n-m)/2}$ for $n>m$.
Taking adjoints proves both estimates when $n<m$.
For $n=m$, use $\|L_n\|^2\le B$.
We have proved
\begin{equation}\label{eq:hilbert-cross}
 \max\{\|L_n^*L_m\|,\|L_nL_m^*\|\}
 \le B\rho^{|n-m|/2}\qquad(n,m\ge0).
\end{equation}

\smallskip\noindent\emph{4. Sum the generation estimates.}
The Cotlar--Stein lemma applies to a finite sequence on a Hilbert space.
If both cross products have norm at most $h(n-m)^2$ and
$\sum_{k\in\mathbb Z}h(k)<\infty$, it gives
$\|\sum_nL_n\|\le2\sum_{k\in\mathbb Z}h(k)$;
see \cite[Lemma~3.1]{LimaniPott21}.
In \eqref{eq:hilbert-cross}, take
\[
 h(k)=\sqrt B\,\rho^{|k|/4}.
\]
The exponent is divided by two because the lemma sums square roots of the cross-product bounds.
For every $N\ge0$, it follows that
\[
 \left\|\sum_{n=0}^N L_n\right\|
 \le2\sqrt B\sum_{k\in\mathbb Z}\rho^{|k|/4}
 =2\sqrt B\,\frac{1+\rho^{1/4}}{1-\rho^{1/4}}
 \le\frac{4\sqrt B}{1-\rho^{1/4}}.
\]
Since $1-\rho^{1/4}\ge(1-\rho)/4=\sigma^2/(4B)$, this yields
\[
 \left\|\sum_{n=0}^N T_n\right\|_{X\to X}
 =\left\|\sum_{n=0}^N L_n\right\|
 \le16\sigma^{-2}B^{3/2}.
\]
Thus the factor $B^{1/2}$ comes from one generation. The geometric sum contributes one further factor $B$.

\smallskip\noindent\emph{5. Pass to the whole tree.}
For a fixed $z\in\D$, a tent containing $z$ has height at least $1-|z|$.
Only finitely many generations can contain $z$.
Hence $\sum_{n=0}^NT_nf(z)$ is eventually equal to $Tf(z)$ for almost every $z$.
Fatou's lemma gives
\[
 \|Tf\|_X^2
 \le\liminf_{N\to\infty}
       \left\|\sum_{n=0}^NT_nf\right\|_X^2
 \le (16\sigma^{-2}B^{3/2})^2\|f\|_X^2.
\]
This proves the assertion, since $\sigma$ depends only on $\eta$.
Setting $a_Q=0$ off a prescribed finite subcollection proves the same bound for that subcollection.
No estimate above depends on dimension.
\end{proof}

\begin{proof}[\textbf{Proof of Theorem~\ref{thm:hilbert-bound}}]
Put $X=L^2(W)$ and $U=W^{1/2}$.
For each of the three trees in Theorem~\ref{thm:separated-factorization},
\[
 B_{\T_j}=\sup_{Q\in\T_j}\|E_Q\|_{X\to X}^2\le B.
\]
For $C_\nu>0$, set
\[
 \widehat a_{Q,\nu}^{(j)}
 =\frac{c_{Q,\nu}^{(j)}}{C_\nu}a_{Q,\nu}^{(j)},
 \qquad
 R_{j,\nu}=\sum_{Q\in\T_j}
       M_{\widehat a_{Q,\nu}^{(j)}}E_QM_{b_{Q,\nu}^{(j)}}.
\]
The two scalar factors vanish off $Q$ and have modulus at most one.
Lemma~\ref{lem:hilbert-tree} therefore gives
$\|R_{j,\nu}\|_{X\to X}\le C_\eta B^{3/2}$, uniformly in $j,\nu$.
Indices with $C_\nu=0$ contribute nothing.
Since $\sum_\nu C_\nu<\infty$, the series
\[
 S=\sum_{j=1}^3\sum_{\nu:C_\nu>0}C_\nu R_{j,\nu}
\]
converges in operator norm on $X$, and
\[
 \|S\|_{X\to X}
 \le3C_\eta B^{3/2}\sum_\nu C_\nu
 \le C_\eta B^{3/2}.
\]
The coefficient sum depends only on $\eta$ and the fixed kernel construction.

We identify $S$ with $P_\eta$ on the dense class $\mathscr D_W$.
Take $f\in\mathscr D_W$ and let $S_m$ be the partial sum over increasing finite coefficient sets exhausting $\mathcal N$.
The function $f$ is Bochner integrable.
Equation~\eqref{eq:separated-absolute} bounds the integral of the absolute series against $\|f\|$ by
\[
 C_\eta(1-|z|)^{-\alpha}\int_\D\|f(\zeta)\|\,d\mueta(\zeta)<\infty
\]
for almost every fixed $z$.
Thus termwise integration in \eqref{eq:separated-kernel} gives
$S_mf(z)\to P_\eta f(z)$ almost everywhere.
On the other hand, $S_mf\to Sf$ in $X$.
Hence $US_mf\to USf$ in ordinary $L^2$, and a subsequence converges almost everywhere.
Multiplying by the bounded operator $U(z)^{-1}$ at each such point gives $Sf=P_\eta f$ almost everywhere.
Density now proves \eqref{eq:hilbert-upper}.
All constants are independent of dimension.
\end{proof}

\section{Consequences and examples}\label{sec:examples}
\subsection{A noncommuting example}
\begin{proposition}\label{prop:noncommuting-example}
Let $\mathcal K$ be a nonzero separable Hilbert space and set $\Hh=\mathcal K\oplus\mathcal K$. Choose
\[
 -(\eta+1)<a<0<b<(p-1)(\eta+1),\qquad t(z)=1-|z|^2.
\]
On $\Hh$, define
\[
 C=\begin{pmatrix}I&I\\0&I\end{pmatrix},\qquad
 D(z)=\begin{pmatrix}t(z)^{a/p}I&0\\0&t(z)^{b/p}I\end{pmatrix},
 \qquad U(z)=C^*D(z)C,\qquad W(z)=U(z)^p.
\]
Then $W$ satisfies the standing assumptions and $P_\eta$ is bounded on $L^p(W)$. The values of $W$ at distinct radii do not commute. There is no scalar weight $w$ with $c wI\le W\le C_0 wI$ for fixed positive $c,C_0$. In particular, this gives an infinite-dimensional example when $\dim\mathcal K=\infty$.
\end{proposition}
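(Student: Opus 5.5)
The plan is to reduce everything to the diagonal weight through the fixed invertible change of variables $C$. Since $C$ is bounded and boundedly invertible, $\|U(z)f(z)\|=\|C^*D(z)Cf(z)\|$ is comparable to $\|D(z)Cf(z)\|$, with constants $\|C\|$ and $\|C^{-1}\|$. Hence $f\mapsto Cf$ is an isomorphism from $L^p(W)$ onto $L^p(D^p)$. Because the Bergman kernel is scalar, $P_\eta$ commutes with the constant operator $C$ on the dense class. Boundedness of $P_\eta$ on $L^p(W)$ is therefore equivalent to boundedness on $L^p(D^p)$, with norms comparable up to $\|C\|^2\|C^{-1}\|^2$.

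On $L^p(D^p)$ the space splits as $L^p(w_a\,d\mueta;\mathcal K)\oplus L^p(w_b\,d\mueta;\mathcal K)$, where $w_\beta=(1-|z|^2)^\beta$. The projection acts diagonally on this splitting. Proposition~\ref{prop:power} gives $w_a,w_b\in B_p(\eta)$ exactly under the stated ranges for $a$ and $b$. Proposition~\ref{thm:scalar-vector} then gives boundedness on each summand, and hence boundedness of $P_\eta$ on $L^p(W)$. The standing assumptions follow the same way. We have $\|U e\|^p\lesssim\|D Ce\|^p\lesssim(t^{a}+t^{b})\|e\|^p$, which is $\mueta$-integrable because $a>-(\eta+1)$. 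Also $U^{-1}=C^{-1}D^{-1}C^{-*}$, so $\|U^{-1}e\|^{p'}\lesssim t^{-ap'/p}+t^{-bp'/p}$. This is integrable because $-bp'/p=-b/(p-1)>-(\eta+1)$. Invertibility almost everywhere is clear. Measurability is immediate since everything is continuous in $z$.

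For noncommutation, write $D(z)=\operatorname{diag}(\lambda I,\mu I)$ with $\lambda=t^{a/p}$ and $\mu=t^{b/p}$. Then
\[
U=C^*DC=\begin{pmatrix}\lambda I&\lambda I\\\lambda I&(\lambda+\mu)I\end{pmatrix}.
\]
Reducing to the $2\times2$ scalar matrix $N(\lambda,\mu)=\begin{pmatrix}\lambda&\lambda\\\lambda&\lambda+\mu\end{pmatrix}$ tensored with $I_{\mathcal K}$, two values commute iff the scalar matrices commute. Compute $[N(\lambda,\mu),N(\lambda',\mu')]$. Its off-diagonal entry is a multiple of $\lambda\mu'-\lambda'\mu$, which is nonzero when $t\ne t'$ because $a\ne b$. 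So $U(z)$ and $U(z')$ do not commute at distinct radii. Since $W=U^p$ and $U=W^{1/p}$ are functions of each other by functional calculus, the values of $W$ do not commute either.

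The main obstacle is the last claim, ruling out a scalar $w$ with $cwI\le W\le C_0wI$. Such a bound would force $W(z)$ to have condition number at most $C_0/c$ for almost every $z$, hence so does $U(z)$, with ratio $(C_0/c)^{1/p}$. But
\[
\|U\|\ge\|D\|/\|C^{-1}\|^2,\qquad\|U^{-1}\|\ge\|D^{-1}\|/\|C\|^2,
\]
so $\|U\|\|U^{-1}\|\gtrsim\max(\lambda,\mu)/\min(\lambda,\mu)=t^{(a-b)/p}$. Since $a<b$, this tends to $\infty$ as $|z|\to1$, a contradiction. Everything is stated on $\mathcal K\otimes\mathbb C^2$, so it holds when $\dim\mathcal K=\infty$. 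This gives the infinite-dimensional example.
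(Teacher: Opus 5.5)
Your proposal is correct and follows the paper's proof: you reduce through the constant change of variables $C$ to the two scalar power weights $t^a,t^b$ (Propositions~\ref{prop:power} and \ref{thm:scalar-vector}, with the scalar kernel commuting with $C$ on $\mathscr D_W$), you carry out the same $2\times2$ commutator computation giving a factor $\lambda\mu'-\lambda'\mu$, and you use functional calculus to pass from $U$ to $W$. The only minor difference is the non-comparability step: you bound the condition number of $U$ below by that of $D$ using $\|C\|$ and $\|C^{-1}\|$, whereas the paper obtains $\lambda_+/\lambda_-\ge x/y$ from the determinant of the $2\times2$ block; both arguments give growth like $t^{(a-b)/p}$.
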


\begin{proof}
Since $t(z)>0$ in $\D$, the operator $D(z)$ is positive and boundedly invertible.
The same holds for $U(z)=C^*D(z)C$, and
\[
 U(z)^{-1}=C^{-1}D(z)^{-1}(C^*)^{-1}.
\]
All these operators depend continuously on $z$.
For every $e\in\Hh$, the two formulas give
\begin{align*}
 \|U(z)e\|^p
 &\le C_p\big(t(z)^a+t(z)^b\big)\|e\|^p,\\
 \|U(z)^{-1}e\|^{p'}
 &\le C_p\big(t(z)^{-a/(p-1)}+t(z)^{-b/(p-1)}\big)\|e\|^{p'}.
\end{align*}
Here the constants also depend on the fixed matrix $C$, but not on $\dim\mathcal K$.
Polar integration gives
\[
 \int_\D t(z)^\beta\,d\mueta(z)
 =\frac{\eta+1}{\eta+1+\beta}
 \qquad\big(\beta>-(\eta+1)\big).
\]
All four powers above satisfy this condition.
Thus $W^{1/p}=U$ satisfies both standing integrability assumptions.

For $F=Cf=(F_1,F_2)$, invertibility of $C^*$ gives
\[
 \|Uf\|^p=\|C^*DF\|^p
 \asymp_{p,C}\big(t^{2a/p}\|F_1\|^2+t^{2b/p}\|F_2\|^2\big)^{p/2}
 \asymp_p t^a\|F_1\|^p+t^b\|F_2\|^p.
\]
The last comparison is the elementary equivalence of the two norms on $\mathbb R^2$.
After integration, this yields
\[
 \|f\|_{L^p(W)}^p
 \asymp_{p,C}\int_\D
 \big(t^a\|F_1\|_{\mathcal K}^p+t^b\|F_2\|_{\mathcal K}^p\big)\,d\mueta.
\]
By Propositions~\ref{prop:power} and \ref{thm:scalar-vector}, the projection is bounded on both $L^p(t^a\,d\mueta;\mathcal K)$ and $L^p(t^b\,d\mueta;\mathcal K)$.
Apply these two operators to the coordinates of $F$, then multiply by $C^{-1}$.
The displayed norm comparison gives a bounded operator on $L^p(W)$.
On $\mathscr D_W$, the kernel commutes with the constant operator $C$.
Thus this operator agrees there with $P_\eta$.
Density proves the required boundedness, with constants independent of $\dim\mathcal K$.
Proposition~\ref{prop:matrix-necessity} also gives the averaging condition.

To check noncommutation, fix a unit vector $v\in\mathcal K$.
The space spanned by $(v,0)$ and $(0,v)$ is invariant under all the operators above.
Write $x=t^{a/p}$ and $y=t^{b/p}$. On this two-dimensional space,
\[
 U(t)=x\begin{pmatrix}1&1\\1&1\end{pmatrix}
      +y\begin{pmatrix}0&0\\0&1\end{pmatrix}.
\]
If the two fixed matrices are $A_0,B_0$, then
\[
 [A_0,B_0]=\begin{pmatrix}0&1\\-1&0\end{pmatrix},
 \qquad [U(t_1),U(t_2)]=(x_1y_2-x_2y_1)[A_0,B_0].
\]
Here $[A,B]=AB-BA$. The ratio $x/y=t^{(a-b)/p}$ is strictly monotone. Hence $U(t_1)$ and $U(t_2)$ do not commute for $t_1\ne t_2$. If their $p$th powers commuted, continuous functional calculus would imply that their positive $p$th roots commute, a contradiction.

Finally, let $\lambda_+\ge\lambda_->0$ be the eigenvalues of this block of $U(t)$.
Its determinant is $xy$, and its largest eigenvalue is at least the first diagonal entry $x$.
Hence
\[
 \frac{\lambda_+}{\lambda_-}
 =\frac{\lambda_+^2}{xy}\ge\frac{x}{y}.
\]
The corresponding eigenvalues of $W(t)$ are $\lambda_+^p,\lambda_-^p$.
Their ratio is at least $(x/y)^p=t^{a-b}$, which tends to infinity as $t\downarrow0$.
If $cw(z)I\le W(z)\le C_0w(z)I$, this ratio would be at most $C_0/c$ almost everywhere.
The lower bound exceeds any fixed constant on a sufficiently thin boundary annulus.
This is a contradiction.
\end{proof}

This example gives noncommuting weights with unbounded anisotropy. Its boundedness follows from a constant change of coordinates and two scalar theorems. It illustrates the theorem but requires no infinite-dimensional argument of its own.

\subsection{Changes on compact subsets}\label{sec:compact-changes}
For a weight $W$, define the finite constants
\[
 C_+(W)=\sup_{\|e\|=1}\left(\int_\D\|W^{1/p}e\|^p\,d\mueta\right)^{1/p},
 \qquad
 C_-(W)=\sup_{\|e\|=1}\left(\int_\D\|W^{-1/p}e\|^{p'}\,d\mueta\right)^{1/p'}.
\]
Their finiteness follows from \eqref{eq:directional-uniform-local} at the root. No integrability of the operator norms is assumed.

\begin{proposition}\label{prop:compact-stability}
Suppose that $W$ and $V$ satisfy the standing assumptions and agree on $\{r<|z|<1\}$ for some $0<r<1$. Then $P_\eta$ is bounded on $L^p(W)$ if and only if it is bounded on $L^p(V)$. More precisely,
\begin{equation}\label{eq:compact-transfer}
 \|P_\eta\|_{L^p(W)\to L^p(W)}
 \le\|P_\eta\|_{L^p(V)\to L^p(V)}
       +2(1-r)^{-\eta-2}C_+(W)C_-(W),
\end{equation}
whenever the norm on the right is finite. The averaging characteristic is finite for $W$ if and only if it is finite for $V$.
\end{proposition}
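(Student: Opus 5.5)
Let $K=\{|z|\le r\}$ and $O=\D\setminus K$. The approach is to split the kernel operator according to whether the input or the output lies in $K$. On $\mathscr D_W$ I will write
\[
 P_\eta=M_OP_\eta M_O+M_KP_\eta+M_OP_\eta M_K .
\]
Since $W=V$ on $O$, the spaces $L^p(W)$ and $L^p(V)$ have the same norm for functions supported in $O$. Hence $\|M_OP_\eta M_Of\|_{L^p(W)}=\|M_OP_\eta M_Of\|_{L^p(V)}\le\|P_\eta\|_{V}\|M_Of\|_{L^p(W)}$. This needs a density remark: $M_Of$ for $f\in\mathscr D_W$ is a limit in the common norm of elements of $\mathscr D_V$ supported in $O$, because $U_W=U_V$ there. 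The bounded extension on $L^p(V)$ then agrees with the kernel integral, as in the proof of Lemma~\ref{lem:Plambda}.

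The two remaining terms each contain a factor supported in $K$, and there the kernel is bounded. If $z\in K$ or $\zeta\in K$, then $|1-\overline\zeta z|\ge1-r$, so $|K_\eta(z,\zeta)|\le(1-r)^{-\eta-2}$.

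For the term $M_KP_\eta$, I will estimate pointwise. Let $F=Uf$ and fix a unit vector $e$. Then
\[
 \langle U(z)P_\eta f(z),e\rangle=\int_\D K_\eta(z,\zeta)\langle F(\zeta),U(\zeta)^{-1}U(z)e\rangle\,d\mueta(\zeta).
\]
This direct route mixes $U(z)$ and $U(\zeta)^{-1}$, which suggests factorizing differently. The operator has a kernel bounded by $(1-r)^{-\alpha}$ times the indicator that one variable lies in $K$. I will write it as a rank-type integral: $g\mapsto\int_\D k(z,\zeta)g(\zeta)\,d\mueta$ with $|k|\le c:=(1-r)^{-\alpha}$.

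By duality, $|\int\langle P f,g\rangle|=|\iint k\langle f(\zeta),g(z)\rangle|$ for $g\in L^{p'}(W^\#)$. The plan is to expand $k$ on the compact piece as an absolutely convergent series of separated bounded factors, as in Theorem~\ref{thm:separated-factorization}. Each separated term $a(z)\int b f\,d\mueta$ has norm at most $\|a\cdot\|_{\ldots}$, and is bounded by $C_+(W)C_-(W)$ via the weak-average estimate of Section~\ref{sec:prelim-scalar} at the root:
\[
 \Big\|a\int_\D bf\,d\mueta\Big\|_{L^p(W)}\le C_+(W)\,\Big\|\int bf\Big\|\le C_+(W)C_-(W)\|f\|_{L^p(W)}.
\]
The separated expansion costs a constant factor, so I expect the honest output to be $C(1-r)^{-\alpha}C_+C_-$ rather than exactly $2$.

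To get the stated constant $2$, I will instead use the Hilbert–Schmidt-type bound directly for the operator $f\mapsto\int k(\cdot,\zeta)f(\zeta)$. Pair with $g$ and apply H\"older in $\zeta$ for fixed $z$:
\[
 \big|\langle \textstyle\int k(z,\zeta)f(\zeta)\,d\zeta,\ e\rangle\big|\le c\,C_-(W)\|f\|\,\|e\|\cdot\text{(for }U(z)e\text{)} .
\]
Here I test $\int k(z,\zeta)f(\zeta)$ against $h=U(z)e'$ using $|\int\langle f,h\rangle k|\le c\int\|Uf\|\|U^{-1}h\|$. Combined with the $L^p$ norm in $z$, this is the main obstacle: converting the $\zeta$-directional bound, which involves $C_-$ of the vector $U(z)e'$, into a $C_+$ bound after integrating in $z$. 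I expect to handle it by duality, bounding $\iint|k|\,\|U f\|\,\|U^{-1}g\|$ by $c\|f\|\|g\|$ times $\sup_e$ directional integrals, and then using Fubini with the closed-graph constants.

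Each of $M_KP_\eta$ and $M_OP_\eta M_K$ then contributes $(1-r)^{-\eta-2}C_+C_-$, which gives \eqref{eq:compact-transfer}. The reverse implication follows by symmetry.

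For the averaging characteristic, I will use the fact that $E_S$ for tents not meeting $K$ is identical for $W$ and $V$. The finitely many dyadic-scale tents with height exceeding $1-r$ have $\|E_S\|\le C_SC_S^*$, which is finite by Section~\ref{sec:prelim-scalar}. These are bounded uniformly, since height at least $1-r$ gives $\mueta(S)\gtrsim(1-r)^{\alpha}$ and hence $C_S\le\mueta(S)^{-1/p}C_+(W)$.
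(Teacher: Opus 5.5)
\textbf{There is a genuine gap: the two pieces containing $K$ are not bounded, and this is the whole content of \eqref{eq:compact-transfer}.}

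Some parts are fine. Your splitting and your treatment of $M_OP_\eta M_O$ match the paper; in fact $M_Of=V^{-1/p}(\mathbf 1_OF)$ already lies in $\mathscr D_V$, so no limiting argument is needed. The averaging part is also fine once you correct one point. The squares with $|I|\ge 1-r$ are infinitely many, not finitely many dyadic ones. Each of them satisfies $\|E_S\|\le\mueta(S)^{-1}C_+(W)C_-(W)\le(1-r)^{-\eta-2}C_+(W)C_-(W)$, which is the uniform bound you need.

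The route you settle on cannot be completed. You bound the kernel pointwise by $(1-r)^{-\alpha}$, with $\alpha=\eta+2$, and then apply H\"older and Fubini to $\iint|k|\,|\langle f(\zeta),g(z)\rangle|$. Once absolute values are taken, the $\zeta$-integral produces a vector that depends on $z$. Applying $U(z)$ to it requires $\|U(z)\|$, and the directional constants do not control that. This is a real failure: the absolute form is unbounded. Here is an example.
\begin{itemize}
\item Take $p=2$ and an $N$-dimensional block, and let $P_v$ denote the projection onto $\mathbb C v$.
\item Choose pairwise disjoint sets $A_i\subset K$ and $B_j$ with $\mueta(A_i)=\mueta(B_j)=\epsilon/N$, and set $n^2=N/\epsilon$.
\item Put $U^{-1}=I+(n-1)P_{e_i}$ on $A_i$ and $U=I+(n-1)P_{v_j}$ on $B_j$, where $(v_j)$ is the discrete Fourier basis. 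Put $U=I$ elsewhere.
\item Then $C_\pm(W)\le\sqrt2$.
\item Let $f=\epsilon^{-1}N^{1/2}e_i$ on $A_i$ and $g=\epsilon^{-1}N^{1/2}v_j$ on $B_j$. These have norm one in $L^2(W)$ and $L^2(W^{-1})$.
\item They give $\iint|\langle f(\zeta),g(z)\rangle|\,d\mueta\,d\mueta=N^{1/2}$.
\end{itemize}
Since $|K_\eta|\ge 2^{-\alpha}$, no absolute-value argument can give a bound in terms of $C_+(W)C_-(W)$.

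\textbf{The missing idea} is an expansion in which the $\zeta$-integral is a single vector and the coefficients sum to exactly $(1-r)^{-\alpha}$. The paper writes $(1-\overline\zeta z)^{-\alpha}=\sum_{n\ge0}a_nz^n\overline\zeta^{\,n}$ with $a_n>0$. It sets $J_ne=z^ne$ and lets $\Lambda_nf=\int_\D\overline\zeta^{\,n}f\,d\mueta$ be the weak integral.
\begin{itemize}
\item One has $\|J_n\|_{\Hh\to L^p(W)}\le C_+(W)$ and $\|\Lambda_n\|_{L^p(W)\to\Hh}\le C_-(W)$.
\item Applying $M_K$ on either side gains an extra factor $r^n$.
\item Hence $\sum_na_nJ_n\Lambda_nM_K$ and $\sum_na_nM_KJ_n\Lambda_n$ converge in operator norm. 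Each has norm at most $C_+(W)C_-(W)\sum_na_nr^n=(1-r)^{-\alpha}C_+(W)C_-(W)$.
\item On $\mathscr D_W$ they agree with the two kernel pieces. This follows by termwise integration, since the absolute series is at most $(1-r)^{-\alpha}$ when one variable lies in $K$, together with an almost everywhere convergent subsequence.
\end{itemize}
This gives the factor $2$, and it works equally well with your partition.

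Your fallback through Theorem~\ref{thm:separated-factorization} can be made rigorous, but only by using disjointness of the tents within each generation. It then yields an unspecified constant $C_\eta$ in place of $2$. So it proves the equivalence of boundedness, but not \eqref{eq:compact-transfer} as stated.
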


\begin{proof}
Put $X=L^p(W)$, $U=W^{1/p}$, $K=\{|z|\le r\}$, $F=\D\setminus K$, and $\alpha=\eta+2$.
We first construct two bounded operators on $X$ from the scalar expansion
\[
 (1-\overline\zeta z)^{-\alpha}
 =\sum_{n\ge0}a_nz^n\overline\zeta^{\,n},
 \qquad a_n=\frac{\alpha(\alpha+1)\cdots(\alpha+n-1)}{n!},\quad a_0=1.
\]
For $e\in\Hh$ and $f\in X$, define
\[
 (J_ne)(z)=z^ne,\qquad
 \Lambda_nf=\int_\D\overline\zeta^{\,n}f(\zeta)\,d\mueta(\zeta).
\]
The second integral is weak. For $h\in\Hh$, weighted H\"older's inequality gives
\[
 |\langle\Lambda_nf,h\rangle|
 \le\|f\|_X
      \left(\int_\D |\zeta|^{np'}\|U(\zeta)^{-1}h\|^{p'}\,d\mueta\right)^{1/p'}
 \le C_-(W)\|f\|_X\|h\|.
\]
Thus $\Lambda_n:X\to\Hh$ is bounded. Also $\|J_n\|_{\Hh\to X}\le C_+(W)$.
Restricting to $K$ gives
\[
 \|\Lambda_nM_K\|_{X\to\Hh}\le r^nC_-(W),
 \qquad \|M_KJ_n\|_{\Hh\to X}\le r^nC_+(W).
\]
The two series
\[
 R_K=\sum_{n\ge0}a_nJ_n\Lambda_nM_K,
 \qquad L_K=\sum_{n\ge0}a_nM_KJ_n\Lambda_n
\]
therefore converge in operator norm on $X$. Since $a_n>0$, their norms satisfy
\[
 \|R_K\|,\ \|L_K\|
 \le C_+(W)C_-(W)\sum_{n\ge0}a_nr^n
 =(1-r)^{-\alpha}C_+(W)C_-(W).
\]

For $f\in\mathscr D_W$, the function $f$ is Bochner integrable.
When $z\in K$ or $\zeta\in K$, the absolute scalar series is at most $(1-r)^{-\alpha}$.
This permits termwise integration in the two restricted series.
Their operator-norm sums agree with their pointwise sums almost everywhere: multiply the partial sums by $U$ and take an almost everywhere convergent subsequence in ordinary $L^p$.
Consequently,
\[
 R_Kf(z)=\int_K K_\eta(z,\zeta)f(\zeta)\,d\mueta(\zeta),
 \qquad
 L_Kf(z)=\mathbf1_K(z)\int_\D K_\eta(z,\zeta)f(\zeta)\,d\mueta(\zeta).
\]

Now assume that the bounded extension $P_{\eta,V}$ on $L^p(V)$ exists.
Because $W=V$ on $F$, restriction to $F$ is a contraction from $X$ to $L^p(V)$ and from $L^p(V)$ to $X$.
Hence
\[
 A_F=M_FP_{\eta,V}M_F:X\to X,
 \qquad\|A_F\|_{X\to X}\le\|P_{\eta,V}\|.
\]
If $f=W^{-1/p}f_0\in\mathscr D_W$, then
$M_Ff=V^{-1/p}(M_Ff_0)\in\mathscr D_V$.
Thus $A_F$ agrees with the exterior part of the kernel on $\mathscr D_W$.
On this dense class, the bounded operator
\[
 S=A_F+R_K+L_KM_F
\]
equals the full kernel integral. Indeed,
\[
 1=\mathbf1_F(z)\mathbf1_F(\zeta)
       +\mathbf1_K(\zeta)+\mathbf1_K(z)\mathbf1_F(\zeta).
\]
It follows that $S$ is the bounded extension of $P_\eta$ to $X$.
The three norm estimates prove \eqref{eq:compact-transfer}.
Interchanging $W$ and $V$ proves the converse.

Finally, consider the averaging characteristic directly.
If $S=S(I)$ has $|I|<1-r$, then $S\subset F$.
The two weighted norms agree on functions supported in $S$, so the norm of $E_S$ is the same for $W$ and $V$.
If $|I|\ge1-r$, the measure formula in Lemma~\ref{lem:full-tree-sparse} gives $\mueta(S)\ge(1-r)^\alpha$.
The weak average and the constant embedding have product norm at most
\[
 \|E_S\|_{X\to X}
 \le\frac{C_+(W)C_-(W)}{\mueta(S)}
 \le(1-r)^{-\alpha}C_+(W)C_-(W).
\]
The same estimate holds for $V$.
Thus the small squares have identical averaging norms, and the large squares are uniformly bounded for both weights.
This proves the last assertion.
\end{proof}

\begin{proposition}\label{prop:no-global-RH}
For every $1<p<\infty$, $\eta>-1$, and nonzero separable Hilbert space $\mathcal K$, there is a weight on $\mathcal H=\mathcal K\oplus\mathcal K$ with the following properties. The weight is noncommuting and is not uniformly scalar-comparable. The projection $P_\eta$ is bounded. There are no $\varepsilon>0$ and $R<\infty$ such that
\[
 \left(\fint_S\|W^{1/p}e\|^{p+\varepsilon}\,d\mueta\right)^{1/(p+\varepsilon)}
 \le R\left(\fint_S\|W^{1/p}e\|^p\,d\mueta\right)^{1/p}
\]
for all Carleson squares $S$ and all $e\in\Hh$. The space $\mathcal K$ may be infinite-dimensional.
\end{proposition}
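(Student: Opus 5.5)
The plan is to start from the weight of Proposition~\ref{prop:noncommuting-example} and change it only on a small disc around the origin, so that a single direction acquires a singularity that is $L^1$ but not $L^{1+\delta}$. Proposition~\ref{prop:compact-stability} then keeps $P_\eta$ bounded, while the reverse H\"older inequality fails already on the root square $S(\partial\D)=\D$.

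\textbf{The modified weight.} Fix admissible $a,b$ and let $W_0=U_0^p$ be the weight of Proposition~\ref{prop:noncommuting-example}. Fix $0<r<1$ and a measurable scalar function $g\ge1$ with $g=1$ on $\{|z|\ge r/2\}$ and
\[
 g(z)=|z|^{-2}\bigl(\log(e/|z|)\bigr)^{-2}\qquad\text{for } |z| \text{ small}.
\]
Then $\int_\D g\,d\mueta<\infty$, because near $0$ the weight $(1-|z|^2)^\eta$ is comparable to $1$ and $\int_0 \rho^{-1}\log^{-2}(e/\rho)\,d\rho<\infty$. On the other hand $\int g^{1+\delta}\,d\mueta=\infty$ for every $\delta>0$. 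Define
\[
 W=g\,W_0,\qquad\text{so that}\qquad W^{1/p}=g^{1/p}U_0 .
\]
A scalar factor commutes with $U_0$, so this identity is immediate. Pointwise $W(z)$ is positive and boundedly invertible. Measurability is clear because $g$ is scalar.

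\textbf{Standing assumptions and boundedness.} On $\{|z|\le r/2\}$ the operators $U_0(z)^{\pm1}$ are continuous, hence uniformly bounded. This gives
\[
 \|W^{1/p}e\|^p\le C\,g\,\|e\|^p \quad\text{there},
\]
which is integrable. Also $\|W^{-1/p}e\|\le\|U_0^{-1}e\|$ because $g\ge1$. Together with the estimates already established for $W_0$, both conditions in \eqref{eq:standing} hold. Since $W=W_0$ on $\{r<|z|<1\}$, Proposition~\ref{prop:compact-stability} together with Proposition~\ref{prop:noncommuting-example} shows that $P_\eta$ is bounded on $L^p(W)$.

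\textbf{Noncommutation and failure of scalar comparability.} These properties concern values at radii in the annulus, where $W=W_0$. The commutator computation for distinct $t_1,t_2$ near the boundary carries over verbatim. So does the eigenvalue ratio bound $t^{a-b}\to\infty$ as $t\downarrow0$, which rules out $cwI\le W\le C_0wI$.

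\textbf{Failure of reverse H\"older.} This is the main step, and I expect it to be the only point needing care. Take $S=\D$ and any unit vector $e$. The right-hand side of the inequality is finite by \eqref{eq:standing}. For the left-hand side, invertibility and continuity of $U_0$ near $0$ give $\|U_0(z)e\|\ge c>0$ for $|z|\le r/2$. Hence
\[
 \int_\D\|W^{1/p}e\|^{p+\varepsilon}\,d\mueta\;\ge\; c^{p+\varepsilon}\int_{|z|\le r/2}g^{1+\varepsilon/p}\,d\mueta=\infty .
\]
So no pair $\varepsilon>0$, $R<\infty$ can work. Nothing here depends on $\dim\mathcal K$, so $\mathcal K$ may be infinite-dimensional.
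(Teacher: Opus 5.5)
Your proposal is correct and follows essentially the same route as the paper: it builds a logarithmic singularity $|z|^{-2}\log^{-2}(e/|z|)$ at the origin on top of the weight $W_0$ of Proposition~\ref{prop:noncommuting-example}, transfers boundedness through Proposition~\ref{prop:compact-stability}, and shows that the reverse H\"older inequality fails on the root square $S=\D$. The only difference is cosmetic. The paper replaces $W_0$ on $\{|z|<1/4\}$ by the scalar weight $w_*I$, whereas you multiply $W_0$ by the scalar $g$. Your choice keeps noncommutation at every pair of distinct radii, at the small extra cost of invoking the local bounds on $U_0^{\pm1}$ near the origin, which you supply.
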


\begin{proof}
Let $W_0$ be the weight in Proposition~\ref{prop:noncommuting-example}. Set
\[
 w_*(z)=\frac1{|z|^2\log^2(e/|z|)}\quad(0<|z|<1/4),
 \qquad
 W_*(z)=\begin{cases}w_*(z)I,&|z|<1/4,\\W_0(z),&|z|\ge1/4.\end{cases}
\]
Set $W_*(0)=I$.
The measure $d\mueta$ is comparable with area measure on $|z|<1/4$.
With $s=\log(e/r)$, polar integration gives
\[
 \int_{|z|<1/4}w_*\,d\mueta
 \asymp_\eta\int_0^{1/4}\frac{dr}{r\log^2(e/r)}
 =\int_{\log(4e)}^\infty\frac{ds}{s^2}<\infty.
\]
For every $\delta>0$, the same substitution gives
\[
 \int_{|z|<1/4}w_*^{1+\delta}\,d\mueta
 \asymp_{\eta,\delta}\int_{\log(4e)}^\infty
           \frac{e^{2\delta s}}{s^{2+2\delta}}\,ds=\infty.
\]
The exponential term grows faster than the denominator.
Also $w_*\ge1$ on the inner disc, so $w_*^{-1/(p-1)}\le1$ there.
On that disc, the two standing integrands are
$w_*\|e\|^p$ and $w_*^{-1/(p-1)}\|e\|^{p'}$.
On its complement, the standing integrability follows from $W_0$.
Thus $W_*$ satisfies both assumptions, as well as positivity, invertibility, and measurability.

The weights $W_*$ and $W_0$ agree on $|z|>1/4$.
Proposition~\ref{prop:compact-stability} therefore proves boundedness of $P_\eta$ for $W_*$.
Choose any two distinct radii greater than $1/4$.
Their values remain noncommuting by Proposition~\ref{prop:noncommuting-example}.
The eigenvalue ratio still tends to infinity at the boundary, so uniform scalar comparison also fails.

Finally, take the Carleson square $S=\D$ and a unit vector $e\in\Hh$.
For every $\varepsilon>0$, the inner disc gives
\[
 \int_\D\|W_*^{1/p}e\|^{p+\varepsilon}\,d\mueta
 \ge\int_{|z|<1/4}w_*^{1+\varepsilon/p}\,d\mueta=\infty.
\]
The integral with exponent $p$ is finite by the standing assumption.
Thus the asserted reverse H\"older inequality already fails on $S=\D$ for every $\varepsilon>0$.
\end{proof}

This example shows directly that the averaging condition does not imply a global directional reverse H\"older estimate. Theorem~\ref{thm:operator-main} does not require that implication.

\subsection{Finite-dimensional compressions}\label{sec:finite}
We now compare a weight with its finite-dimensional compressions.
To preserve the pointwise norm, we compress $W^{2/p}$ and then take the power $p/2$.
For $p=2$ this is the usual compression of $W$. For other $p$, the two constructions need not agree.

\begin{proposition}\label{prop:finite-sections}
Let $W$ satisfy the standing assumptions. Let $\Hh_n\subset\Hh$ be increasing nonzero finite-dimensional subspaces with dense union, and let $\Pi_n$ be their orthogonal projections. Define weights on $\Hh_n$ by
\[
 U_n(z)=\left(\Pi_n W(z)^{2/p}\Pi_n\big|_{\Hh_n}\right)^{1/2},
 \qquad W_n(z)=U_n(z)^p.
\]
They satisfy the standing assumptions. For every Carleson square $S$,
\begin{align}
 \|E_S\|_{L^p(W)\to L^p(W)}
 &=\sup_n\|E_S\|_{L^p(W_n)\to L^p(W_n)},\label{eq:finite-expectation}\\
 [W]_{\mathbf B_p^{\rm av}(\eta)}
 &=\sup_n[W_n]_{\mathbf B_p^{\rm av}(\eta)}.\label{eq:finite-characteristic}
\end{align}
If this characteristic is finite, then
\begin{equation}\label{eq:finite-projection}
 \|P_\eta\|_{L^p(W)\to L^p(W)}
 =\sup_n\|P_\eta\|_{L^p(W_n)\to L^p(W_n)}.
\end{equation}
Both sides are finite by Theorem~\ref{thm:operator-main}.
\end{proposition}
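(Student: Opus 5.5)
The key identity is pointwise: for $e\in\Hh_n$, $\|U_n(z)e\|^2=\langle \Pi_nW^{2/p}\Pi_n e,e\rangle=\|W(z)^{1/p}e\|^2$. So $L^p(W_n)$ is exactly the closed subspace of $L^p(W)$ consisting of $\Hh_n$-valued functions, with the same norm. We first check the standing assumptions for $W_n$. Positivity and invertibility follow because $\Pi_nW^{2/p}\Pi_n\ge m^{-2/p}\Pi_n$ on $\Omega_m$. Measurability follows by the argument of Section~\ref{sec:prelim-scalar}. The first integrability condition is immediate from the isometry. For the dual condition we use duality: the dual of the subspace $L^p(W_n)$ under the pairing is $L^{p'}(W_n^\#)$. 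Then $\|U_n^{-1}e\|=\sup\{|\langle e,h\rangle|:\|U_nh\|\le1,\ h\in\Hh_n\}$. Writing $h=W^{-1/p}k$ with $\|k\|\le1$, this is at most $\|\Pi_n W^{-1/p}e\|\le\|W^{-1/p}e\|$ for $e\in\Hh_n$. So $\|U_n^{-1}e\|\le\|U^{-1}e\|$ pointwise, which gives the integrability.

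For \eqref{eq:finite-expectation}, note that $E_S$ on $L^p(W_n)$ is the restriction of $E_S$ on $L^p(W)$ to $\Hh_n$-valued functions. Indeed, the weak average of an $\Hh_n$-valued function lies in $\Hh_n$, by testing against $h\perp\Hh_n$. Hence we get ``$\ge$''. For ``$\le$'', take $f\in\mathscr D_W$, so $Uf$ is a simple function $F$. The functions $f_n=U^{-1}\Pi_nF$ need not be $\Hh_n$-valued, so instead we approximate by $\Hh_n$-valued functions directly. The $\Hh_m$-valued functions in $L^p(W)$, over all $m$, are dense: if $g$ in the dual space annihilates all $\mathbf1_Ee$ with $e\in\bigcup\Hh_m$, then $\int_E g=0$ weakly for all $E$, so $g=0$. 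By Hahn--Banach, the union is dense. Since $E_S$ is bounded on $L^p(W)$, its norm equals the supremum over this dense union, and the union of increasing subspaces gives $\sup_n$. Taking the supremum over $S$ gives \eqref{eq:finite-characteristic}.

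For \eqref{eq:finite-projection}, assume the characteristic is finite. Theorem~\ref{thm:operator-main} then bounds $P_\eta$ on both $L^p(W)$ and $L^p(W_n)$. We show that $P_\eta$ on $L^p(W_n)$ is the restriction of $P_\eta$ on $L^p(W)$. The constructed operator $S$ in the proof of Theorem~\ref{thm:operator-main} is a norm-convergent sum of tree operators built only from $E_Q$ and scalar multipliers. These preserve $\Hh_n$-valued functions and coincide for both weights. So the extensions agree, which gives ``$\ge$''. The density argument above then gives ``$\le$''.

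The main obstacle is the dual integrability for $W_n$ and the density of $\Hh_n$-valued functions. The pointwise inequality $\|U_n^{-1}e\|\le\|W^{-1/p}e\|$ is the step I would verify first, through the quadratic-form duality described above.
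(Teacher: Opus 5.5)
Your proof is correct, but two of its steps take a different route from the paper. Write $X_n$ for the $\Hh_n$-valued functions in $L^p(W)$.

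\emph{Density of $\bigcup_nX_n$.} You argue by annihilators. If $g\in L^{p'}(W^\#)$ kills every $\mathbf1_Ee$ with $e\in\bigcup_m\Hh_m$, then weighted H\"older puts $\langle g,e\rangle$ in $L^1(\mueta)$, and its integral over every set is zero. Testing on a countable dense set of such $e$ then gives $g=0$ a.e. The paper proves the same density constructively. It truncates to sets $G_m$ on which $\|U\|$, $\|U^{-1}\|$ and $\|f\|$ are at most $m$, and applies dominated convergence to $\mathbf1_{G_m}\Pi_nf$.

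\emph{Restriction of $P_\eta$.} To show that $P_\eta$ on $L^p(W)$ restricts to $P_\eta$ on $L^p(W_n)$, you reuse the operator $S=\sum_{j,\nu}C_\nu R_{j,\nu}$ from the proof of Theorem~\ref{thm:operator-main}. Its tree pieces are pointwise finite sums of scalar multiples of weak averages, built from weight-independent data. So they preserve $X_n$ and agree there with the pieces for $W_n$, whose characteristic is finite by \eqref{eq:finite-characteristic}. The operator-norm limits therefore agree. The paper instead shows that, for every $f\in L^p(W)$, the bounded extension equals a.e.\ the weak kernel vector $v_z(f)$ defined by $\langle v_z(f),h\rangle=\int_\D K_\eta(z,\zeta)\langle f(\zeta),h\rangle\,d\mueta(\zeta)$. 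This vector visibly lies in $\Hh_n$ when $f\in X_n$. Your route is shorter but tied to the specific sufficiency construction. The paper's identification does not depend on how boundedness was obtained: boundedness on $L^p(W)$ alone forces $P_\eta$ to preserve $X_n$ and to restrict to the bounded projection on $L^p(W_n)$.

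One correction in the dual integrability step. Take $h\in\Hh_n$ with $\|Uh\|\le1$ and put $k=Uh$. Then $|\langle e,h\rangle|=|\langle U^{-1}e,k\rangle|\le\|U^{-1}e\|$ directly. This is all you need, and it is the paper's computation. The intermediate bound by $\|\Pi_nU^{-1}e\|$ does not follow, since $k$ need not lie in $\Hh_n$, and it is false in general. Take $U=\begin{pmatrix}2&-1/2&1\\-1/2&1&0\\1&0&1\end{pmatrix}$ on $\mathbb C^3$, with $\Hh_n$ spanned by the first two coordinate vectors and $e=(1,-3/2,0)$. Then $\|U_n^{-1}e\|^2=137/69>17/9=\|\Pi_nU^{-1}e\|^2$. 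Simply drop that middle term.
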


\begin{proof}
Put $U=W^{1/p}$ and $X=L^p(W)$.

\smallskip\noindent\emph{1. Verify the compressed weights.}
For $e\in\Hh_n$, the definition gives
\begin{equation}\label{eq:finite-norm-preserve}
 \|U_ne\|^2=\langle U^2e,e\rangle=\|Ue\|^2.
\end{equation}
In particular, $\|U_ne\|\ge\|U^{-1}\|^{-1}\|e\|$ at almost every point.
Thus $U_n$ is positive and invertible on $\Hh_n$.
To estimate its inverse, use \eqref{eq:finite-norm-preserve} and self-adjointness of $U$:
\[
 \|U_n^{-1}e\|
 =\sup_{0\ne h\in\Hh_n}\frac{|\langle e,h\rangle|}{\|U_nh\|}
 =\sup_{0\ne h\in\Hh_n}\frac{|\langle U^{-1}e,Uh\rangle|}{\|Uh\|}
 \le\|U^{-1}e\|.
\]
These two pointwise estimates give the standing integrability for $W_n$.
The matrix entries of the compressed operator are measurable. Taking its positive square root and its $p$th power preserves measurability.
Equation~\eqref{eq:finite-norm-preserve} identifies $L^p(W_n)$ isometrically with
\[
 X_n=\{f\in X:f(z)\in\Hh_n\text{ almost everywhere}\}.
\]
These subspaces increase with $n$. They are closed in $X$.
Indeed, suppose $f_j\in X_n$ and $f_j\to f$ in $X$.
Then $Uf_j\to Uf$ in ordinary $L^p$.
A subsequence converges almost everywhere.
At each such point, multiplication by the bounded operator $U(z)^{-1}$ gives $f_j(z)\to f(z)$ along that subsequence.
Since $\Hh_n$ is closed, $f(z)\in\Hh_n$ almost everywhere.

\smallskip\noindent\emph{2. Prove density in the weighted norm.}
Fix $f\in X$. For integers $m\ge2$, put
\[
 G_m=\{z:|z|\le1-1/m,\ \|U(z)\|\le m,\
                    \|U(z)^{-1}\|\le m,\ \|f(z)\|\le m\}.
\]
The operator norms are measurable, since $\Hh$ is separable and their suprema can be taken over a countable dense subset of the unit sphere.
The sets $G_m$ increase to $\D$ up to a null set.
Hence
\[
 \|f-\mathbf1_{G_m}f\|_X^p
 =\int_{\D\setminus G_m}\|Uf\|^p\,d\mueta\longrightarrow0.
\]
For fixed $m$, put $f_{m,n}=\mathbf1_{G_m}\Pi_nf\in X_n$.
Since $\Pi_n\to I$ strongly, $U(f_{m,n}-\mathbf1_{G_m}f)$ tends to zero almost everywhere.
Moreover,
\[
 \|U(f_{m,n}-\mathbf1_{G_m}f)\|^p
 \le(2m^2)^p\mathbf1_{G_m}.
\]
This is an integrable bound independent of $n$.
Dominated convergence gives $f_{m,n}\to\mathbf1_{G_m}f$ in $X$.
First choosing $m$ and then $n$ proves that $\bigcup_nX_n$ is dense in $X$.

\smallskip\noindent\emph{3. Compare the averaging norms.}
For $f\in X_n$, its weak average on $S$ lies in $\Hh_n$.
Indeed, its pairing with every vector in $\Hh_n^\perp$ is zero.
Thus $E_S$ preserves $X_n$, and its restriction is the average on $L^p(W_n)$ under the isometry above.
For a fixed $S$, the operator $E_S$ is bounded on $X$ by Section~\ref{sec:prelim-scalar}.
Its norm is the supremum of its norms on the dense union of the $X_n$.
This proves \eqref{eq:finite-expectation}.
Raise both sides to the power $p$ and take the supremum in $S$.
The two suprema can be interchanged, which proves \eqref{eq:finite-characteristic}.

\smallskip\noindent\emph{4. Compare the projection norms.}
Assume that the characteristic in \eqref{eq:finite-characteristic} is finite.
Theorem~\ref{thm:operator-main} gives bounded projections on $X$ and on each $L^p(W_n)$.
We first identify their restrictions.

For every $f\in X$ and fixed $z\in\D$, the kernel defines a weak vector $v_z(f)\in\Hh$ by
\[
 \langle v_z(f),h\rangle
 =\int_\D K_\eta(z,\zeta)\langle f(\zeta),h\rangle\,d\mueta(\zeta)
 \qquad(h\in\Hh).
\]
Indeed, weighted H\"older's inequality gives
\[
 \left|\int_\D K_\eta(z,\zeta)\langle f(\zeta),h\rangle\,d\mueta(\zeta)\right|
 \le (1-|z|)^{-\eta-2}C_-(W)\|f\|_X\|h\|.
\]
The Riesz representation theorem therefore defines $v_z(f)$.
It also gives
\[
 \|v_z(f)-v_z(g)\|
 \le(1-|z|)^{-\eta-2}C_-(W)\|f-g\|_X.
\]
Choose $f_j\in\mathscr D_W$ with $f_j\to f$ in $X$.
For every fixed $z$, we have $v_z(f_j)\to v_z(f)$ in $\Hh$.
On the dense class, $v_z(f_j)$ agrees almost everywhere with $P_\eta f_j(z)$.
Boundedness gives $UP_\eta f_j\to UP_\eta f$ in ordinary $L^p$.
Take an almost everywhere convergent subsequence and multiply by $U(z)^{-1}$.
Comparison of the two limits shows that $P_\eta f(z)=v_z(f)$ almost everywhere.

If $f\in X_n$, the defining pairing of $v_z(f)$ vanishes on $\Hh_n^\perp$.
Thus $P_\eta$ preserves $X_n$.
On $\mathscr D_{W_n}$, its restriction agrees with the Bochner kernel integral defining the projection on $L^p(W_n)$.
Density in that space shows that these two bounded operators coincide.

Let $M$ be the supremum on the right of \eqref{eq:finite-projection}.
Restriction gives $M\le\|P_\eta\|_{X\to X}<\infty$.
Conversely, $\|P_\eta f\|_X\le M\|f\|_X$ holds on every $X_n$.
The density proved in Step 2 extends this bound to all of $X$.
Hence $\|P_\eta\|_{X\to X}\le M$, which proves \eqref{eq:finite-projection}.
\end{proof}

Theorem~\ref{thm:operator-main} gives a uniform bound for these compressions. Their projection norms therefore remain bounded as the dimension increases. The identities above also show that any better dimension-free matrix estimate passes to the Hilbert-space setting with the same bound.

The general exponent $\gamma_p=1+1/p+1/(p-1)$ comes from summing two geometric series. It is larger than the finite-dimensional exponent $\theta_p=1-1/p+1/(p-1)$. At $p=2$, almost orthogonality removes this loss. It remains to determine whether one can obtain the exponent $\theta_p$, or a smaller exponent, with a dimension-free constant for every $p$. Scalar weights require at least the scalar sharp exponent $\max\{1,1/(p-1)\}$ \cite{PottReguera13}. The present proof does not give matching lower examples for operator weights.

\appendix
\section{Finite-dimensional estimates}\label{sec:matrix-class}
\subsection{Matrix characteristics}
In this appendix $d<\infty$ and $W$ is a $d\times d$ weight. Reducing matrices and matrix $A_p$ classes are treated in \cite{Goldberg03,LauzonTreil07,Volberg97}. Convex-body sparse estimates were developed in \cite{NPTV17}. We use
\[
 \|f\|_{L^p(W)}^p=\int_\D |W(z)^{1/p}f(z)|^p\,d\mueta(z).
\]

\begin{proposition}\label{prop:scalar-reduction}
If $W=wI_d$, then
\[
 [W]_{\mathbf B_p^{\rm av}(\eta)}
 =\sup_S
 \left(\fint_Sw\,d\mueta\right)
 \left(\fint_Sw^{-1/(p-1)}\,d\mueta\right)^{p-1}
 =[w]_{B_p(\eta)}.
\]
For $p=2$ and general finite-dimensional $W$,
\[
 \|E_S\|_{L^2(W)\to L^2(W)}^2
 =\left\|
 \left(\fint_SW\,d\mueta\right)^{1/2}
 \left(\fint_SW^{-1}\,d\mueta\right)^{1/2}
 \right\|^2.
\]
\end{proposition}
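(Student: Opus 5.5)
Both identities come from computing $\|E_S\|$ directly. The operator $E_S$ factors through the constant vectors on $S$: it is the composition of the weak-average map $f\mapsto v=\fint_S f\,d\mueta$ from $L^p(W)$ into $\mathbb C^d$ with the embedding $v\mapsto\mathbf1_Sv$ from $\mathbb C^d$ back into $L^p(W)$. We compute the norm of each factor and then combine them. For the embedding,
\[
\|\mathbf1_Sv\|_{L^p(W)}^p=\mueta(S)\fint_S|W^{1/p}v|^p\,d\mueta .
\]
For the average, write $v=\mueta(S)^{-1}\int_S f\,d\mueta$. Set $F=W^{1/p}f$ and test against $h\in\mathbb C^d$. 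This gives
\[
\langle v,h\rangle=\mueta(S)^{-1}\int_S\langle F,W^{-1/p}h\rangle\,d\mueta .
\]
Hölder duality in $L^p(\mueta;\mathbb C^d)$ then shows that the map $F\mapsto\langle v,h\rangle$ has norm $\mueta(S)^{-1/p}\bigl(\fint_S|W^{-1/p}h|^{p'}d\mueta\bigr)^{1/p'}$. Equality is attained by $F=\mathbf1_S|W^{-1/p}h|^{p'-2}W^{-1/p}h$.

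\emph{Scalar case.} Take $W=wI_d$. The embedding norm is $|v|\,\mueta(S)^{1/p}\bigl(\fint_Sw\bigr)^{1/p}$, and it depends only on $|v|$. Hence
\[
\|E_S\|=\mueta(S)^{1/p}\Bigl(\fint_Sw\Bigr)^{1/p}\sup_{\|f\|\le1}|v|.
\]
The norm of $v$ is the supremum over unit vectors $h$. For $W=wI_d$ we have $|W^{-1/p}h|^{p'}=w^{-p'/p}=w^{-1/(p-1)}$. So the supremum equals $\mueta(S)^{-1/p}\bigl(\fint_S w^{-1/(p-1)}\bigr)^{1/p'}$, and the extremal $F$ for a fixed $h$ realises it. The factors $\mueta(S)^{\pm1/p}$ cancel. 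Raising to the power $p$ and using $p/p'=p-1$ gives
\[
\|E_S\|^p=\Bigl(\fint_S w\Bigr)\Bigl(\fint_S w^{-1/(p-1)}\Bigr)^{p-1}.
\]
Taking the supremum over $S$ yields both equalities. The alignment of the two factors is automatic here, because the embedding norm is rotation invariant. This is why the scalar case has no loss.

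\emph{The case $p=2$.} The embedding norm is now $\mueta(S)^{1/2}|\mathcal A^{1/2}v|$, where $\mathcal A=\fint_SW$. The averaging map sends the Hilbert space $L^2(W)$ to $\mathbb C^d$. Its adjoint with respect to the inner product $\langle\mathcal A v,v\rangle$ can be computed exactly. The cleaner route is to conjugate. Set $L=J_+E_SJ_+^{-1}$ on $L^2(\mueta;\mathbb C^d)$, which is what the paper does in Lemma~\ref{lem:hilbert-tree}. Then
\[
LF=\mathbf1_S W^{1/2}\fint_S W^{-1/2}F\,d\mueta .
\]
This operator has rank at most $d$ and factors as $L=\iota\circ\pi$. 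Here $\pi F=\fint_S W^{-1/2}F$ has $\pi\pi^*=\mueta(S)^{-1}\fint_S W^{-1}=\mueta(S)^{-1}\mathcal B$, where $\mathcal B=\fint_SW^{-1}$. Also $\iota v=\mathbf1_SW^{1/2}v$ has $\iota^*\iota=\mueta(S)\mathcal A$. Therefore
\[
\|L\|^2=\|\pi^*\iota^*\iota\pi\|=\|(\iota^*\iota)^{1/2}\pi\pi^*(\iota^*\iota)^{1/2}\|=\|\mathcal A^{1/2}\mathcal B\mathcal A^{1/2}\|=\|\mathcal A^{1/2}\mathcal B^{1/2}\|^2 .
\]
The second equality uses $\|XY\|=\|Y^*X^*\|$ and $\|T^*T\|=\|TT^*\|$. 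This gives the stated identity.

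The only delicate point is justifying these finite-dimensional adjoint computations with weak averages, which is routine in finite dimension. The main work is thus identifying the exact extremal function in the Hölder step.
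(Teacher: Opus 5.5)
Your proposal is correct and follows the paper's proof. In both, $E_S$ is factored through the constant vectors on $S$. In the scalar case the averaging map is normed by H\"older duality, with extremal $\mathbf 1_S w^{-1/(p-1)}e$. At $p=2$ the identity $\|T\|^2=\|TT^*\|$ is applied with $\iota^*\iota=\mueta(S)\mathcal A$ and $\pi\pi^*=\mueta(S)^{-1}\mathcal B$. The only difference is cosmetic: you conjugate to unweighted $L^2(\mueta;\mathbb C^d)$ before taking adjoints, whereas the paper computes the adjoint $L_S^*e=\mueta(S)^{-1}\mathbf 1_SW^{-1}e$ directly in $L^2(W)$.
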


\begin{proof}
Fix a Carleson square $S$ and first take $W=wI_d$.  Since $E_Sf$ is constant on $S$,
\[
 \|E_Sf\|_{L^p(w)}
 =\left(\int_Sw\,d\mueta\right)^{1/p}
   \left|\fint_S f\,d\mueta\right|.
\]
H\"older's inequality bounds the norm of the vector average map $f\mapsto\fint_S f\,d\mueta$ on $L^p(w;S)$ by
\[
 \frac1{\mueta(S)}
 \left(\int_Sw^{-1/(p-1)}\,d\mueta\right)^{1/p'}.
\]
This bound is attained by a scalar multiple of $\mathbf1_Sw^{-1/(p-1)}e$, where $e$ is any fixed unit vector.
Hence
\begin{align*}
 \|E_S\|_{L^p(w)\to L^p(w)}^p
 &=\frac1{\mueta(S)^p}
   \left(\int_Sw\,d\mueta\right)
   \left(\int_Sw^{-1/(p-1)}\,d\mueta\right)^{p-1}\\
 &=\left(\fint_Sw\,d\mueta\right)
   \left(\fint_Sw^{-1/(p-1)}\,d\mueta\right)^{p-1}.
\end{align*}
Taking the supremum gives the scalar identity.

Now let $p=2$ and let $W$ be matrix-valued.  Put
\[
 A_S=\fint_SW\,d\mueta,
 \qquad B_S=\fint_SW^{-1}\,d\mueta.
\]
For $e\in\mathbb C^d$, the constant embedding $J_Se=\mathbf1_Se$ satisfies
\[
 \|J_Se\|_{L^2(W)}^2
 =\mueta(S)\langle A_Se,e\rangle.
\]
On the other hand, for the averaging map $L_Sf=\fint_Sf\,d\mueta$ and every $e\in\mathbb C^d$,
\begin{align*}
 |\langle L_Sf,e\rangle|
 &=\left|\fint_S
   \langle W^{1/2}(z)f(z),W(z)^{-1/2}e\rangle
   \,d\mueta(z)\right|\\
 &\le \mueta(S)^{-1/2}\|f\|_{L^2(W)}
       \langle B_Se,e\rangle^{1/2}.
\end{align*}
For $f\in L^2(W)$ and $e\in\mathbb C^d$, direct computation also gives
\[
 \langle L_Sf,e\rangle
 =\left\langle f,\mueta(S)^{-1}\mathbf1_SW^{-1}e
   \right\rangle_{L^2(W)}.
\]
The vector on the right belongs to $L^2(W)$ by the standing assumptions. Hence
\[
 L_S^*e=\frac1{\mueta(S)}\mathbf1_SW^{-1}e,
 \qquad
 L_SL_S^*=\frac1{\mueta(S)}B_S.
\]
Since $\|J_Sx\|_{L^2(W)}=\mueta(S)^{1/2}|A_S^{1/2}x|$ and $E_S=J_SL_S$,
\begin{align*}
 \|E_S\|^2
 &=\mueta(S)\|A_S^{1/2}L_S\|^2\\
 &=\mueta(S)\|A_S^{1/2}L_SL_S^*A_S^{1/2}\|\\
 &=\|A_S^{1/2}B_SA_S^{1/2}\|
 =\|A_S^{1/2}B_S^{1/2}\|^2.
\end{align*}
The last equality is the identity $\|TT^*\|=\|T\|^2$ with $T=A_S^{1/2}B_S^{1/2}$. This proves the formula. The argument also holds on a separable Hilbert space. It is the local averaging computation in \cite{AlemanConstantin12}.
\end{proof}

A reducing matrix for the norm
$e\mapsto(\fint_S|W^{1/p}e|^p)^{1/p}$ is a positive matrix whose Euclidean norm is comparable to it, uniformly in $e$. Such matrices exist by finite-dimensional ellipsoid approximation; see \cite{Goldberg03,LauzonTreil07}.

\begin{proposition}\label{prop:equiv-char}
For fixed $1<p<\infty$ and $d<\infty$,
\[
 [W]_{\mathbf B_p^{\rm av}(\eta)}
 \asymp_{p,d}
 [W]_{\mathbf B_p^{\rm R}(\eta)}.
\]
Equivalently, if $\mathcal W_{p,S}$ and $\mathcal V_{p',S}$ are reducing matrices satisfying
\begin{align*}
 |\mathcal W_{p,S}e|^p&\asymp_{p,d}\fint_S|W^{1/p}(z)e|^p\,d\mueta(z),\\
 |\mathcal V_{p',S}e|^{p'}&\asymp_{p,d}\fint_S|W^{-1/p}(z)e|^{p'}\,d\mueta(z),
\end{align*}
then
\[
 [W]_{\mathbf B_p^{\rm R}(\eta)}
 \asymp_{p,d}\sup_S\|\mathcal W_{p,S}\mathcal V_{p',S}\|^p.
\]
\end{proposition}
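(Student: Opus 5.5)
The plan is to reduce both characteristics to the same quantity $\sup_S\|\mathcal W_{p,S}\mathcal V_{p',S}\|^p$, using reducing matrices on each square. Fix $S$ and let $\mathcal W=\mathcal W_{p,S}$ and $\mathcal V=\mathcal V_{p',S}$ be reducing matrices as in the statement. Their existence for fixed $S$ needs both directional norms to be finite and definite; this follows from \eqref{eq:directional-uniform-local}, and definiteness holds because $W(z)$ is invertible almost everywhere.

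\textbf{Step 1: the averaging characteristic.} As in the proof of Proposition~\ref{prop:scalar-reduction}, factor $E_S=J_SL_S$. Here $J_Se=\mathbf1_Se$ and $L_Sf=\fint_Sf\,d\mueta$. By the definition of $\mathcal W$,
\[
 \|J_Se\|_{L^p(W)}=\mueta(S)^{1/p}\Bigl(\fint_S|W^{1/p}e|^p\,d\mueta\Bigr)^{1/p}\asymp_{p,d}\mueta(S)^{1/p}|\mathcal We|.
\]
Hence $\|E_S\|\asymp_{p,d}\mueta(S)^{1/p}\sup_{\|f\|\le1}|\mathcal WL_Sf|$. For the second factor, dualize against $e$:
\[
 \langle\mathcal WL_Sf,e\rangle=\fint_S\langle W^{1/p}f,\,W^{-1/p}\mathcal We\rangle\,d\mueta.
\]
Hölder's inequality bounds this by $\mueta(S)^{-1/p}\|f\|_{L^p(W)}\,|\mathcal V\mathcal We|$, up to constants. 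Equality in Hölder can be approached by testing $f$ aligned with $W^{-1/p}\mathcal We$. Therefore $\sup_{\|f\|\le1}|\mathcal WL_Sf|\asymp\mueta(S)^{-1/p}\|\mathcal V\mathcal W\|$. Since $\mathcal V\mathcal W=(\mathcal W\mathcal V)^*$, this gives $\|E_S\|^p\asymp_{p,d}\|\mathcal W\mathcal V\|^p$.

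\textbf{Step 2: the R characteristic.} Use the duality $\|A\|=\|A^*\|$ to write $\|W(z)^{1/p}W(\zeta)^{-1/p}\|=\|W(\zeta)^{-1/p}W(z)^{1/p}\|$. In finite dimensions the operator norm is comparable, with constant depending on $d$, to $\sum_k|W(\zeta)^{-1/p}W(z)^{1/p}e_k|$ over an orthonormal basis. Using this, the inner $\zeta$-integral becomes
\[
 \Bigl(\fint_S\|W(\zeta)^{-1/p}W(z)^{1/p}\|^{p'}d\mueta(\zeta)\Bigr)^{1/p'}\asymp_{p,d}\|\mathcal VW(z)^{1/p}\|.
\]
Here the reducing property of $\mathcal V$ is applied to each vector $W(z)^{1/p}e_k$. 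Then $\|\mathcal VW(z)^{1/p}\|=\|W(z)^{1/p}\mathcal V\|\asymp_d\sum_k|W(z)^{1/p}\mathcal Ve_k|$. Raising to the power $p$ and averaging in $z$, the reducing property of $\mathcal W$ gives $\asymp\sum_k|\mathcal W\mathcal Ve_k|^p\asymp_{p,d}\|\mathcal W\mathcal V\|^p$.

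\textbf{Step 3: combine.} Taking suprema over $S$ in Steps 1 and 2 proves both displayed equivalences.

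The main obstacle is bookkeeping of measurability and finiteness in Step 2. I would handle it with the basis-sum comparisons, which only cost constants depending on $d$. A second point to check is near-extremality of the Hölder test functions in Step 1. There I would use truncations of $W^{-p'/p}\mathcal We$ to stay inside $L^p(W)$, with finiteness guaranteed by the standing assumptions.
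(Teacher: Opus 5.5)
Your proposal is correct and follows the paper's proof. You factor $E_S=J_SL_S$, compare the constant embedding with $|\mathcal W e|$ and the average map with $\mathcal V$ through $L^p$ duality, and then run the same basis-and-adjoint comparison on the double average; the only cosmetic difference is that you dualize $\mathcal W L_S$ directly against $e$, while the paper passes through the quotient norm of $L_S$, whose dual norm is $\tau$. No truncation is needed for the lower bound in Step~1, since ordinary $L^p$--$L^{p'}$ duality already gives the norm of $F\mapsto\fint_S\langle F,W^{-1/p}\mathcal We\rangle\,d\mueta$ exactly; and for nonscalar $W$ with $p\ne2$ the aligned test function is $f=W^{-1/p}|G|^{p'-2}G$ with $G=W^{-1/p}\mathcal We$, not $W^{-p'/p}\mathcal We$.
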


\begin{proof}
Fix $S$. All integrals and $L^p$ norms in this paragraph use the probability measure $\mueta(S)^{-1}\mueta$ on $S$. This normalization does not change the norm of $E_S$. Write $U=W^{1/p}$, $A=\mathcal W_{p,S}$ and $B=\mathcal V_{p',S}$. Thus the norms
\[
 \rho(e)=\left(\fint_S|Ue|^p\right)^{1/p},\qquad
 \tau(e)=\left(\fint_S|U^{-1}e|^{p'}\right)^{1/p'}
\]
are comparable to $|Ae|$ and $|Be|$, respectively.
The average map $Lf=\fint_Sf$ is onto, since it maps each constant function to its value.
For $h\in\mathbb C^d$, put $F=Uf$. Then
\[
 \langle Lf,h\rangle=\fint_S\langle F,U^{-1}h\rangle,
 \qquad \|F\|_p=\|f\|_{L^p(W;S)}.
\]
Ordinary $L^p$ duality shows that the norm of this functional is $\tau(h)$.
For the quotient norm $q(e)=\inf_{Lf=e}\|f\|_{L^p(W;S)}$, it follows that
\[
 q^*(h)=\sup_{\|f\|_{L^p(W;S)}\le1}|\langle Lf,h\rangle|=\tau(h).
\]
The dual norm of $h\mapsto|Bh|$ is $e\mapsto|B^{-1}e|$, since $B$ is positive. Taking dual norms therefore gives $q(e)\asymp_{p,d}|B^{-1}e|$. The constant embedding has norm $\rho(e)$, so
\[
 \|E_S\|=\sup_{e\ne0}\frac{\rho(e)}{q(e)}
 \asymp_{p,d}\|AB\|.
\]
For the last comparison, write $e=Bv$ and take the supremum over $v\ne0$.
We next compare the double average. Fix an orthonormal basis $e_1,\ldots,e_d$. For any matrix $M$ and $1<r<\infty$,
\[
 \|M\|\asymp_{r,d}\left(\sum_{i=1}^d|Me_i|^r\right)^{1/r}.
\]
For fixed $z$, first take the adjoint inside the operator norm. Then apply the reducing property of $B$ to the fixed vectors $U(z)e_i$. This gives
\begin{align*}
 &\left(\fint_S\|U(z)U(\zeta)^{-1}\|^{p'}\,d\mueta(\zeta)\right)^{1/p'}\\
 &\quad=\left(\fint_S\|U(\zeta)^{-1}U(z)\|^{p'}\,d\mueta(\zeta)\right)^{1/p'}\\
 &\quad\asymp_{p,d}
 \left(\sum_{i=1}^d\fint_S|U(\zeta)^{-1}U(z)e_i|^{p'}\,d\mueta(\zeta)\right)^{1/p'}\\
 &\quad\asymp_{p,d}\left(\sum_{i=1}^d|BU(z)e_i|^{p'}\right)^{1/p'}
 \asymp_{p,d}\|BU(z)\|=\|U(z)B\|.
\end{align*}
The last equality also follows by taking the adjoint. It does not assert that the two matrices commute. For the average in $z$, the reducing property of $A$ gives
\[
 \fint_S\|U(z)B\|^p\,d\mueta(z)
 \asymp_{p,d}\sum_{i=1}^d\fint_S|U(z)Be_i|^p\,d\mueta(z)
 \asymp_{p,d}\sum_{i=1}^d|ABe_i|^p
 \asymp_{p,d}\|AB\|^p.
\]
Taking the supremum over $S$ proves both assertions. These are identities and norm comparisons on a single probability space. No geometry of the underlying set is used.
\end{proof}

\begin{remark}
We use the averaging characteristic in arbitrary dimension. We use the Roudenko characteristic for the finer matrix estimate. Their comparison constants may depend on dimension.
\end{remark}

\subsection{The matrix sparse estimate}\label{sec:tree-sparse}
We now prove the finer finite-dimensional bound in Theorem~\ref{thm:matrix-main}. We specialize the nonhomogeneous matrix sparse theory \cite{BenitoEtAl26} to boundary tents. Only the local matrix estimates and the nested tree structure enter.

Fix a dyadic grid $\mathcal G$ on $\partial\D$ and write
$\T(\mathcal G)=\{S(I):I\in\mathcal G\}$. Its tents are nested or disjoint. By Lemma~\ref{lem:full-tree-sparse}, the two children $Q_1,Q_2$ of $Q$ satisfy
\begin{equation}\label{eq:child-ratio}
 \mueta(Q_1)+\mueta(Q_2)\le\kappa_\eta\mueta(Q),
 \qquad \kappa_\eta<1.
\end{equation}
Thus the full tree is sparse, with disjoint top halves $G_Q=Q\setminus(Q_1\cup Q_2)$.

We use the definition of sparseness from Section~\ref{sec:separated}, with $\mu$ in place of $\mueta$ on a general measure space $(X,\mu)$.
For a subfamily $\Ssp\subset\T$ define the vector sparse form
\[
 \A_{\Ssp}(f,g)
 =\sum_{Q\in\Ssp}\mu(Q)
 \fint_Q\fint_Q |\ip{f(x)}{g(y)}|
 \,d\mu(x)d\mu(y).
\]
Here each average uses $\mu$. On the disc we take $\mu=\mueta$.

\begin{definition}\label{def:rooted-tree}
A rooted nested tree $\T$ on $(X,\mu)$ is a countable family of measurable sets of finite positive measure. Write $\T=\bigcup_{n\ge0}\T_n$ for its generations. Sets in the same generation are pairwise disjoint. Each $R\in\T_{n+1}$ has a unique parent $\widehat R\in\T_n$ with $R\subset\widehat R$. Any two sets in $\T$ are disjoint or one contains the other. Descendants of $Q$, including $Q$ itself, form $\T(Q)$. Each descendant is reached from $Q$ by finitely many child steps. Set $\mathscr S(Q)=\mathscr S\cap\T(Q)$ for a subfamily $\mathscr S$. Inclusions in this appendix allow equality. The localized maximal operator is
\[
 M_Q^{\T}h(x)=\sup_{x\in R\in\T(Q)}\fint_R|h|\,d\mu
\]
for $x\in Q$, and it is zero off $Q$.
\end{definition}

This operator has weak type $(1,1)$ with constant $1$. Indeed, the maximal descendants $R$ for which $\fint_R|h|\,d\mu>\lambda$ are pairwise disjoint. They cover the level set. Such maximal sets exist because each descendant has only finitely many ancestors below $Q$. Hence
\[
 \mu\{M_Q^{\T}h>\lambda\}
 \le\sum_R\mu(R)
 \le\lambda^{-1}\int_Q|h|\,d\mu.
\]
The children need not cover their parent.

We first prove the variable-weight Carleson embedding on a nested tree. This is the main step needed for the matrix estimate.

\begin{lemma}\label{lem:tree-variable-CE}
Let $1<s<\infty$, set $s'=s/(s-1)$, and let $\T$ be a rooted nested tree. For each $Q\in\T$, let $w_Q$ be positive almost everywhere on $Q$, with $0<\int_Qw_Q\,d\mu<\infty$.  Assume that for some $A\ge1$,
\[
 \|w_Pw_Q^{-1}\|_{L^\infty(Q)}
 \le A\,\langle w_P\rangle_Q\,\langle w_Q\rangle_Q^{-1},
 \qquad Q\subset P,
\]
where $\langle h\rangle_Q=\fint_Qh\,d\mu$.  Let $\{\beta_Q\}_{Q\in\T}$ be nonnegative and suppose that
\[
 \sum_{Q\in\T(P)}
 \langle w_P\rangle_Q\,\langle w_Q\rangle_Q^{s-1}\,\beta_Q
 \le C_2\,\mu(P)\,\langle w_P\rangle_P,
 \qquad P\in\T.
\]
Then
\[
 \sum_{Q\in\T}
 \left\langle w_Q^{1/s'}|f|\right\rangle_Q^{s}\beta_Q
 \lesssim_s A^{1+1/s'}C_2\,\|f\|_{L^s(\mu)}^s.
\]
The implicit constant is independent of the branching of the tree and of the ambient geometry.
\end{lemma}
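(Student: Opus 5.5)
Our plan follows the stopping-time proof of the variable-weight Carleson embedding in the nonhomogeneous matrix sparse theory, adapted to a rooted nested tree. We may assume $f\ge0$ and that only finitely many $\beta_Q$ are nonzero; monotone convergence removes the second assumption. For each $Q$, write
\[
 \left\langle w_Q^{1/s'}f\right\rangle_Q.
\]
The idea is to replace the varying weight $w_Q$ by a single weight $w_P$ attached to a stopping ancestor $P$. The two-sided comparability hypothesis allows this, at the cost of a factor of $A$.

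\emph{Step 1: stopping tree.} Start with the root and let $\mathscr P$ be the stopping family. For $P\in\mathscr P$, its stopping children are the maximal $Q\in\T(P)$, $Q\ne P$, such that
\[
 \left\langle w_P^{1/s'}f\right\rangle_Q\langle w_Q\rangle_Q^{1/s'}\langle w_P\rangle_Q^{-1/s'}
 >2\left\langle w_P^{1/s'}f\right\rangle_P.
\]
The two factors $\langle w_Q\rangle_Q$ and $\langle w_P\rangle_Q$ are the natural normalization. Maximal sets exist because every descendant has only finitely many ancestors below $P$. For $Q$ whose stopping parent is $P$ (denoted $\pi(Q)=P$), the comparability hypothesis gives
\[
 w_Q^{1/s'}\le A^{1/s'}\langle w_Q\rangle_Q^{1/s'}\langle w_P\rangle_Q^{-1/s'}w_P^{1/s'}
 \quad\text{on } Q.
\]
Hence
\[
 \left\langle w_Q^{1/s'}f\right\rangle_Q
 \le A^{1/s'}\langle w_Q\rangle_Q^{1/s'}\langle w_P\rangle_Q^{-1/s'}\left\langle w_P^{1/s'}f\right\rangle_Q
 \le 2A^{1/s'}\langle w_Q\rangle_Q^{1/s'}\langle w_P\rangle_Q^{-1/s'}\left\langle w_P^{1/s'}f\right\rangle_P,
\]
where the last inequality holds because $Q$ is not stopped.

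\emph{Step 2: summing inside a stopping block.} Raising to the power $s$, the block of $P$ contributes at most
\[
 (2A^{1/s'})^s\left\langle w_P^{1/s'}f\right\rangle_P^s
 \sum_{Q\in\T(P)}\langle w_Q\rangle_Q^{s-1}\langle w_P\rangle_Q\,\beta_Q\cdot\langle w_P\rangle_Q^{-s}\langle w_Q\rangle_Q .
\]
We need the weights to appear exactly as in the packing hypothesis. The hypothesis at $Q$ with ancestor $P$ gives
\[
 \langle w_Q\rangle_Q\lesssim A\langle w_P\rangle_Q .
\]
To get this, average the pointwise inequality in the reverse direction, or use it directly. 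With this bound, the sum is controlled by
\[
 C_2\,\mu(P)\langle w_P\rangle_P\left\langle w_P^{1/s'}f\right\rangle_P^s
\]
up to a power of $A$. The bookkeeping has to be done so that the total power is $A^{1+1/s'}$: one factor $A$ comes from the weight comparison and $A^{1/s'}$ from Step 1 after normalization. If the exponents do not match, we would instead define the stopping condition directly with the quantity $\langle w_Q\rangle_Q^{1/s'}\langle w_P\rangle_Q^{-1/s'}$ already absorbed.

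\emph{Step 3: the stopping family.} By Hölder,
\[
 \left\langle w_P^{1/s'}f\right\rangle_P^s\langle w_P\rangle_P\,\mu(P)
 \le \langle w_P\rangle_P^{s}\,\mu(P)\left(\frac{1}{\langle w_P\rangle_P}\fint_P f^s w_P^{\,\cdot}\right)\cdots .
\]
It is cleaner to write
\[
 \left\langle w_P^{1/s'}f\right\rangle_P
 =\langle w_P\rangle_P\cdot\frac{\int_P (f w_P^{-1/s})\,w_P\,d\mu}{\int_P w_P\,d\mu}.
\]
This is a weighted average of $g=fw_P^{-1/s}$ with respect to $w_P\,d\mu$. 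The main obstacle is that this measure changes with $P$. We expect to handle it through a sparseness (Carleson) property of $\mathscr P$ relative to each $w_P\,d\mu$. By the stopping rule and the weak-$(1,1)$ bound for $M_Q^{\T}$, applied to the measure $w_P\,d\mu$ (the same maximal-set argument works for any measure), the stopping children $Q$ of $P$ satisfy
\[
 \sum_Q \langle w_P\rangle_Q\,\mu(Q)\le\tfrac12\langle w_P\rangle_P\,\mu(P).
\]
Thus the sets $E_P=P\setminus\bigcup Q$ carry at least half the $w_P$-mass of $P$.

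Consequently,
\[
 \langle w_P\rangle_P\,\mu(P)\left(\text{average of }g\right)^s
 \lesssim \int_{E_P}\bigl(M^{w_P}g\bigr)^s w_P\,d\mu .
\]
Here $M^{w_P}$ is the weighted maximal operator on $\T(P)$. The integrand equals $\bigl(M^{w_P}(fw_P^{-1/s})\bigr)^s w_P$. On $E_P$ we would try to compare this with $f^s$ via the weak-$(1,1)$ bound and Marcinkiewicz interpolation for $M^{w_P}$, which is bounded on $L^s(w_P)$ with constant $\lesssim_s 1$. Combined with the disjointness of the $E_P$, this yields $\lesssim_s\|f\|_{L^s}^s$. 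This last step, making the disjoint-set argument compatible with the varying measures $w_P\,d\mu$, is where we expect the real difficulty.
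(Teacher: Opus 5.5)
There is a genuine gap, and it is already in the first inequality of Step~1. The compatibility hypothesis is one-sided. The condition $\|w_Pw_Q^{-1}\|_{L^\infty(Q)}\le A\langle w_P\rangle_Q\langle w_Q\rangle_Q^{-1}$ says $w_P\le A\langle w_P\rangle_Q\langle w_Q\rangle_Q^{-1}w_Q$ on $Q$, so the \emph{ancestor's} weight is dominated by the descendant's. You use the reverse bound $w_Q\le A\langle w_Q\rangle_Q\langle w_P\rangle_Q^{-1}w_P$, and it does not follow. Take $A=2$, $w_P\equiv1$, and $w_Q\ge1$ unbounded on $Q$ with $\langle w_Q\rangle_Q\le2$: the hypothesis holds, but your bound fails. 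In the paper's application (Theorem~\ref{thm:tree-matrix}) only $w_P\le\|\mathcal W_P^{-1}\mathcal W_Q\|^pw_Q$ is available. Your scheme bounds $\langle w_Q^{1/s'}f\rangle_Q$ from above by $w_P$-quantities, so it needs exactly the missing direction.

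The later steps do not repair this.
\begin{itemize}
\item The bound $\langle w_Q\rangle_Q\lesssim A\langle w_P\rangle_Q$ in Step~2 cannot follow: the compatibility condition is invariant under $w_Q\mapsto cw_Q$, and this bound is not.
\item Even granting Step~1, the block sum carries $\langle w_Q\rangle_Q^{s-1}\langle w_P\rangle_Q^{1-s}\beta_Q$. This differs from the testing term by a factor $\langle w_P\rangle_Q^{-s}$, and no lower bound for $\langle w_P\rangle_Q$ is available.
\item The packing in Step~3 is not a weak-$(1,1)$ consequence. Your stopping quantity equals $\langle w_P\rangle_Q^{1/s}\langle w_Q\rangle_Q^{1/s'}$ times the $w_P\,d\mu$-average of $fw_P^{-1/s}$. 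The $Q$-dependent prefactor keeps it from being a level set of a single weighted maximal function.
\item As you acknowledge, the final summation is not carried out. The $L^s(w_P)$ bound for $M^{w_P}$ controls $\int_Pf^s$, not $\int_{E_P}f^s$, so summing over the stopping family overcounts.
\end{itemize}

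The missing idea is to use the hypothesis in its stated direction, which forces a duality argument, as in the paper.
\begin{itemize}
\item Normalize $\langle w_Q\rangle_Q=1$ by replacing $\beta_Q$ with $\langle w_Q\rangle_Q^{s-1}\beta_Q$; all hypotheses and the conclusion are unchanged.
\item Put $q=s'$ and $\alpha_Q=\beta_Q/\mu(Q)$. It suffices to bound $H^*x=\sum_Qx_Q\alpha_Qw_Q^{1/q}\mathbf1_Q$ in $L^q(\mu)$ by $\|x\|_{\ell^q(\beta)}$.
\item At a point of $R$, compatibility gives $\sum_{P\supset R}x_P\alpha_Pw_P^{1/q}\le A^{1/q}w_R^{1/q}z_R$, where $z_R=\sum_{P\supset R}x_P\alpha_P\langle w_P\rangle_R^{1/q}$. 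Here ancestor weights are bounded by a descendant weight, which is what the hypothesis allows.
\item The elementary inequality $(\sum_jb_j)^q\le q\sum_jb_j(\sum_{i\le j}b_i)^{q-1}$, applied along the chain of active sets, gives $\|H^*x\|_q^q\le qA^{(q-1)/q}I$ with $I=\sum_Rx_Rz_R^{q-1}\beta_R$.
\item Apply the same inequality to $z_R$, using $\langle w_P\rangle_R\le A\langle w_P\rangle_Q\langle w_Q\rangle_R$ for $R\subset Q\subset P$ and the testing condition. This gives $\|z\|_{\ell^q(\beta)}^q\le qAC_2I$.
\item H\"older's inequality, $I\le\|x\|_{\ell^q(\beta)}\|z\|_{\ell^q(\beta)}^{q-1}$, closes the bootstrap and yields the constant $A^{1+1/s'}C_2$.
\end{itemize}
No stopping time and no weighted maximal function is needed.
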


\begin{proof}
This is the tree form of \cite[Theorem~4.3]{BenitoEtAl26}. We give a proof using only nesting.
Put $a_Q=\langle w_Q\rangle_Q$. Replace $w_Q$ by $w_Q/a_Q$ and $\beta_Q$ by $a_Q^{s-1}\beta_Q$. Both sides of the required estimate are unchanged. The hypotheses become
\[
 \langle w_Q\rangle_Q=1,\qquad
 w_P\le A\langle w_P\rangle_Qw_Q\quad(Q\subset P),
 \qquad
 \sum_{Q\subset P}\langle w_P\rangle_Q\beta_Q\le C_2\mu(P).
\]
Let $q=s'$ and $\alpha_Q=\beta_Q/\mu(Q)$. The sequence norm is
$\|x\|_{\ell^q(\beta)}=(\sum_Q|x_Q|^q\beta_Q)^{1/q}$.
Terms with $\beta_Q=0$ may be omitted. The map
$(Hf)_Q=\langle w_Q^{1/q}f\rangle_Q$ satisfies
\[
 \sum_Q(Hf)_Qx_Q\beta_Q
 =\int_X f\left(\sum_Qx_Q\alpha_Qw_Q^{1/q}\mathbf1_Q\right)d\mu.
\]
By duality it suffices to bound
\[
 H^*x=\sum_Q x_Q\alpha_Qw_Q^{1/q}\mathbf1_Q
\]
in $L^q(\mu)$ by a constant times $\|x\|_{\ell^q(\beta)}$. First restrict $\beta$ to a finite subfamily. This preserves the testing inequality. All sums below are over this subfamily, with the order inherited from $\T$. Replacing $x_Q$ by $|x_Q|$ only enlarges $|H^*x|$, so we take $x_Q\ge0$.

For any ordered nonnegative sequence $(b_j)_{j=1}^n$, the fundamental theorem of calculus gives
\begin{equation}\label{eq:chain-calculus}
 \Big(\sum_{j=1}^n b_j\Big)^q
 \le q\sum_{j=1}^n b_j\Big(\sum_{i=1}^j b_i\Big)^{q-1}.
\end{equation}
To see this, put $B_j=\sum_{i=1}^j b_i$ and $B_0=0$. Then
$B_j^q-B_{j-1}^q=q\int_{B_{j-1}}^{B_j}t^{q-1}\,dt\le qb_jB_j^{q-1}$. Sum in $j$.
At each point, order the active sets from largest to smallest. Set
\[
 k_{R,P}=\langle w_P\rangle_R^{1/q}\quad(R\subset P),
 \qquad z_R=\sum_{P\supset R}x_P\alpha_Pk_{R,P},
 \qquad I=\sum_R x_Rz_R^{q-1}\beta_R.
\]
On $R$, compatibility gives
$\sum_{P\supset R}x_P\alpha_Pw_P^{1/q}
 \le A^{1/q}w_R^{1/q}z_R$.
The term indexed by $R$ in \eqref{eq:chain-calculus} is therefore at most
\[
 qx_R\alpha_R w_R^{1/q}
       (A^{1/q}w_R^{1/q}z_R)^{q-1}\mathbf1_R
 =qA^{(q-1)/q}x_R\alpha_Rz_R^{q-1}w_R\mathbf1_R.
\]
Integrate and use $\int_Rw_R=\mu(R)$ and $\alpha_R\mu(R)=\beta_R$. We get
\begin{equation}\label{eq:Hstar-I}
 \|H^*x\|_q^q\le qA^{(q-1)/q}I.
\end{equation}
For $R\subset Q\subset P$, average $w_P\le A\langle w_P\rangle_Qw_Q$ over $R$. Also use the testing inequality under $Q$. These give
\[
 k_{R,P}\le A^{1/q}k_{R,Q}k_{Q,P},
 \qquad \sum_{R\subset Q}k_{R,Q}^q\beta_R\le C_2\mu(Q).
\]
Apply \eqref{eq:chain-calculus} to $z_R$, again ordering ancestors from largest to smallest.
For a fixed ancestor $Q$ of $R$, the partial sum ending at $Q$ is
\[
 \sum_{P\supset Q}x_P\alpha_Pk_{R,P}
 \le A^{1/q}k_{R,Q}\sum_{P\supset Q}x_P\alpha_Pk_{Q,P}
 =A^{1/q}k_{R,Q}z_Q.
\]
The term added at this step is $x_Q\alpha_Qk_{R,Q}$.
Substituting these two expressions into \eqref{eq:chain-calculus} gives
\[
 z_R^q\le qA^{(q-1)/q}
 \sum_{Q\supset R}x_Q\alpha_Qk_{R,Q}^q z_Q^{q-1}.
\]
Multiply by $\beta_R$ and sum in $R$. Interchange the two finite sums and then apply descendant testing:
\begin{align*}
 \|z\|_{\ell^q(\beta)}^q
 &\le qA^{(q-1)/q}\sum_Qx_Q\alpha_Qz_Q^{q-1}
           \sum_{R\subset Q}k_{R,Q}^q\beta_R\\
 &\le qA^{(q-1)/q}C_2\sum_Qx_Q\alpha_Qz_Q^{q-1}\mu(Q)\\
 &=qA^{(q-1)/q}C_2I\le qAC_2I.
\end{align*}
The last inequality uses $A\ge1$.
H\"older gives $I\le\|x\|_{\ell^q(\beta)}\|z\|_{\ell^q(\beta)}^{q-1}$.
If $\|z\|_{\ell^q(\beta)}>0$, divide the preceding bound by its $(q-1)$st power to get
$\|z\|_{\ell^q(\beta)}\le qAC_2\|x\|_{\ell^q(\beta)}$.
If this norm is zero, the estimate is immediate. Thus
$I\lesssim_q(AC_2)^{q-1}\|x\|_{\ell^q(\beta)}^q$.
Together with \eqref{eq:Hstar-I}, this proves
\[
 \|H^*x\|_q^q
 \lesssim_q A^{(q-1)/q}(AC_2)^{q-1}\|x\|_{\ell^q(\beta)}^q.
\]
The exponent of $A$ in this last bound is $(q-1)(1+1/q)$.
The norms of $H$ and $H^*$ agree. Since $s/q=1/(q-1)$, raising the bound for $\|H^*\|^q$ to this power gives
\[
 \|Hf\|_{\ell^s(\beta)}^s
 \lesssim_s
 \big[A^{(q-1)(1+1/q)}C_2^{q-1}\big]^{1/(q-1)}\|f\|_s^s
 =A^{1+1/q}C_2\|f\|_s^s.
\]
The constant is independent of the finite subfamily. Exhaust the countable tree by finite subfamilies and use monotone convergence. Reversing the normalization completes the proof.
\end{proof}

\begin{remark}\label{rem:tree-CE-dependence}
Lemma~\ref{lem:tree-variable-CE} is the nested-tree case of \cite[Theorem~4.3]{BenitoEtAl26}. The proof uses only the order of the sets, the compatibility inequality, and descendant testing. It gives the same power $A^{1+1/s'}$.
\end{remark}

\begin{theorem}\label{thm:tree-matrix}
Let $1<p<\infty$ and $1\le d<\infty$. Let $(X,\mu)$ be a measure space and $\T$ a rooted nested tree in the sense of Definition~\ref{def:rooted-tree}. Let $\Ssp\subset\T$ be $\delta$-sparse. Let $W$ be a measurable $d\times d$ matrix function that is Hermitian and positive definite almost everywhere. Assume that, for every $Q\in\T$ and $e\in\mathbb C^d$,
\[
 \int_Q|W^{1/p}e|^p\,d\mu<\infty,
 \qquad
 \int_Q|W^{-1/p}e|^{p'}\,d\mu<\infty.
\]
Put
\[
 V=W^{-p'/p},\qquad V^{1/p'}=W^{-1/p}.
\]
In this theorem $[W]_{A_p(\T)}$ denotes the Roudenko double-average characteristic with the supremum restricted to $Q\in\T$, namely
\[
 [W]_{A_p(\T)}
 :=\sup_{Q\in\T}\fint_Q
 \left(\fint_Q\|W(x)^{1/p}W(y)^{-1/p}\|^{p'}\,d\mu(y)\right)^{p/p'}d\mu(x).
\]
For a positive scalar weight $u$ with $0<u(P):=\int_Pu\,d\mu<\infty$ for every $P\in\T$, write
\[
 [u]_{A_\infty(\T)}
 :=\sup_{P\in\T}\frac1{u(P)}\int_P M_P^{\T}(u\mathbf1_P)\,d\mu,
\]
and define the directional Fujii--Wilson characteristics
\[
 [W]_{A^{\rm sc}_{p,\infty}(\T)}
 :=\sup_{|e|=1}\big[|W^{1/p}e|^p\big]_{A_\infty(\T)},
 \qquad
 [V]_{A^{\rm sc}_{p',\infty}(\T)}
 :=\sup_{|e|=1}\big[|V^{1/p'}e|^{p'}\big]_{A_\infty(\T)}.
\]
If these quantities and $[W]_{A_p(\T)}$ are finite, then, for $f\in L^p(\mu;\mathbb C^d)$ and $g\in L^{p'}(\mu;\mathbb C^d)$,
\begin{equation}\label{eq:tree-matrix-mixed}
 \A_{\Ssp}(W^{-1/p}f,W^{1/p}g)
 \lesssim_{p,d,\delta}
 [W]_{A_p(\T)}^{1/p}
 [W]_{A^{\rm sc}_{p,\infty}(\T)}^{1/p'}
 [V]_{A^{\rm sc}_{p',\infty}(\T)}^{1/p}
 \|f\|_{L^p(\mu)}\|g\|_{L^{p'}(\mu)}.
\end{equation}
Moreover, suppose that $\T=\T(\mathcal G)$ is one of the dyadic Carleson tent trees in $(\D,\mueta)$ and that $W$ has finite full Carleson-square Roudenko characteristic
\[
 B:=[W]_{\mathbf B_p^{\rm R}(\eta)}<\infty.
\]
Then
\begin{equation}\label{eq:tree-matrix-bound}
 \A_{\Ssp}(W^{-1/p}f,W^{1/p}g)
 \lesssim_{p,d,\delta,\eta}
 B^{\theta_p}\|f\|_{L^p(\mueta)}\|g\|_{L^{p'}(\mueta)},
\end{equation}
where
\[
 \theta_p=1+\frac1{p-1}-\frac1p.
\]
\end{theorem}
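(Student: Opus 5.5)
The plan follows the convex-body/reducing-matrix route of \cite{NPTV17,BenitoEtAl26}, specialized to the nested tree. Once the sparse form is reduced to the variable-weight Carleson embedding of Lemma~\ref{lem:tree-variable-CE}, nesting is the only geometry used.

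\emph{Reduction and embedding for \eqref{eq:tree-matrix-mixed}.} Fix $Q\in\Ssp$ and choose reducing matrices $\mathcal W_Q,\mathcal V_Q$ for the norms $e\mapsto\langle|W^{1/p}e|^p\rangle_Q^{1/p}$ and $e\mapsto\langle|W^{-1/p}e|^{p'}\rangle_Q^{1/p'}$, as in Proposition~\ref{prop:equiv-char}. We insert $\mathcal V_Q^{-1}\mathcal V_Q$ and write
\[
 |\langle W^{-1/p}(x)f(x),W^{1/p}(y)g(y)\rangle|
 \le\|W^{-1/p}(x)\mathcal V_Q^{-1}\|\,|\mathcal V_QW^{1/p}(y)g(y)|\,|f(x)|.
\]
After averaging in $x$ and $y$, each $Q$-term is bounded by a product of two factors. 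The first is $\langle|\mathcal W_Q\mathcal V_Q\dots|\rangle$-type: by Proposition~\ref{prop:equiv-char} and a $d$-dimensional norm equivalence it reduces to $\langle\|\mathcal V_QW^{-1/p}\|^{p'}\rangle_Q$-controlled averages of $|f|$. The second is the analogous average of $|g|$, with the cross factor $\|\mathcal W_Q\mathcal V_Q\|\lesssim_{p,d}[W]_{A_p(\T)}^{1/p}$. Hölder over the sparse family, using $\mu(Q)\le\delta^{-1}\mu(G_Q)$, splits $\A_\Ssp$ into a product of an $\ell^p$ sum in $f$ and an $\ell^{p'}$ sum in $g$. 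Each sum is a Carleson embedding $\sum_Q\langle w_Q^{1/s'}|f|\rangle_Q^s\beta_Q$. Here $w_Q$ is the scalar weight $\|\mathcal V_QW^{-1/p}\|^{p'}$, or the dual one built from $W^{1/p}$ and $\mathcal W_Q$; we take $s=p$ or $p'$ and $\beta_Q=\mu(Q)$ restricted to $\Ssp$.

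\emph{Verifying the hypotheses of Lemma~\ref{lem:tree-variable-CE}.} The compatibility inequality $w_P\le A\langle w_P\rangle_Qw_Q^{\ldots}$ for $Q\subset P$ follows from the reducing property. Expanding in a basis, $\|\mathcal V_PW^{-1/p}\|\le\|\mathcal V_P\mathcal V_Q^{-1}\|\,\|\mathcal V_QW^{-1/p}\|$, and $\|\mathcal V_P\mathcal V_Q^{-1}\|^{p'}\asymp_{p,d}\langle w_P\rangle_Q$ up to the same reducing-matrix argument. This gives $A\lesssim_{p,d}1$. The testing condition is the key estimate. We need $\sum_{Q\in\Ssp(P)}\langle w_P\rangle_Q\mu(Q)\lesssim[\cdot]_{A_\infty}\mu(P)\langle w_P\rangle_P$. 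Since $w_P\asymp\sum_i|W^{-1/p}\mathcal V_Pe_i|^{p'}$ is a finite sum of directional scalar weights $|V^{1/p'}e|^{p'}$, sparseness gives
\[
 \sum_{Q\in\Ssp(P)}\int_Q u\le\delta^{-1}\sum_Q\langle u\rangle_Q\mu(G_Q)\le\delta^{-1}\int_PM_P^{\T}(u\mathbf1_P)\,d\mu,
\]
which is bounded by $[u]_{A_\infty(\T)}u(P)$. This gives $C_2\lesssim[V]_{A^{\rm sc}_{p',\infty}}$ for the $f$-side and $[W]_{A^{\rm sc}_{p,\infty}}$ for the $g$-side. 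Taking $p$-th and $p'$-th roots then produces the exponents $1/p$ and $1/p'$ in \eqref{eq:tree-matrix-mixed}. One must track which directional characteristic attaches to which root; matching the statement fixes that bookkeeping.

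\emph{Bound \eqref{eq:tree-matrix-bound}.} Since $[W]_{A_p(\T)}\le B$, it remains to bound the Fujii--Wilson characteristics on tent trees by powers of $B$. For $u=|W^{1/p}e|^p$ we show $[u]_{A_\infty(\T)}\lesssim_{p,d,\eta}B$, and for the dual weight $\lesssim B^{1/(p-1)}$. The route is to show $u\in B_p(\eta)$ scalarly with $[u]_{B_p}\lesssim_{p,d}B$. This follows from $\langle|W^{-1/p}\cdot|\rangle$ reducing matrices and Hölder, since $u^{-1/(p-1)}\le|\mathcal V\ldots|$-type bounds via $|e|\le\|W^{1/p}W^{-1/p}\|$. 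We then bound $\int_PM_P^\T(u\mathbf1_P)$ using the scalar $L^p(u)$-boundedness of $M^\T_P$ applied to $\mathbf1_P$, in the form $\int M(u\mathbf1_P)\le\ldots$. That is the main obstacle: tents lack a reverse Hölder property, so the $A_\infty$ bound must come from the tree maximal function. My first attempt is to use Hölder with $u=u^{1/p}u^{1/p'}$ and boundedness of $M^\T$ on $L^{p'}(u^{-p'/p})$, whose norm is $\lesssim[u]_{B_p}^{1/(p-1)}$ by the standard weak-$(1,1)$-plus-interpolation argument on nested trees. The exponents then combine to $1/p+\ldots=\theta_p$: $B^{1/p}\cdot B^{1/p'}\cdot B^{1/(p-1)\cdot 1/p}\cdot\ldots$. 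If the direct count overshoots $\theta_p$, I would instead use the sharper bound $[u]_{A_\infty}\lesssim[u]_{B_p}$, obtained from the weak-type $(p,p)$ bound of $M^\T$ with constant $[u]_{B_p}^{1/p}$.
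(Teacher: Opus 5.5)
Your proof of the mixed estimate \eqref{eq:tree-matrix-mixed} follows the paper's route. You use reducing matrices on each $Q$, the cross factor $\|\mathcal W_Q\mathcal V_Q\|\lesssim_{p,d}[W]_{A_p(\T)}^{1/p}$, Lemma~\ref{lem:tree-variable-CE} with $s=p$ and $s=p'$, compatibility from the reducing property, testing through the sets $G_Q$ and $M_P^{\T}$, and H\"older over $\Ssp$ with weights $\mu(Q)$. Keep the inverses as in your first display, though. The $f$-side weight is $v_Q=\|\mathcal V_Q^{-1}W^{-1/p}\|^{p'}\mathbf 1_Q$, and the compatibility factor is $\|\mathcal V_Q\mathcal V_P^{-1}\|^{p'}\asymp_{p,d}\langle v_P\rangle_Q$. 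With $\mathcal V_Q$ in place of $\mathcal V_Q^{-1}$ the weight is not normalized. The normalization $\langle v_Q\rangle_Q\asymp_{p,d}1$ is what gives $A\lesssim_{p,d}1$ and makes the factor $\langle v_Q\rangle_Q^{s-1}$ in the testing condition harmless.

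The genuine gap is in the one tent-specific step of \eqref{eq:tree-matrix-bound}: the bound $[u]_{A_\infty(\T)}\lesssim_p[u]_{B_p(\eta)}$ for $u=|W^{1/p}e|^p$, and its analogue with $p'$ for the dual side. Your H\"older splitting is the right one. With $\sigma=u^{-1/(p-1)}$,
\[
 \int_PM_P^{\T}(u\mathbf1_P)\,d\mueta\le\|M_P^{\T}\|_{L^{p'}(\sigma)\to L^{p'}(\sigma)}\,u(P).
\]
However, the norm on $L^{p'}(\sigma)$ is $[\sigma]_{B_{p'}}^{1/(p'-1)}=[\sigma]_{B_{p'}}^{p-1}=[u]_{B_p}$. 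The power $[u]_{B_p}^{1/(p-1)}$ you wrote is the norm on $L^p(u)$. With your power you would get $[u]_{A_\infty(\T)}\lesssim[u]_{B_p}^{1/(p-1)}$.
\begin{itemize}
\item For $p<2$ this is too weak to produce the factor $B^{1/p'}$ in your count.
\item For $p>2$ it is false: the weights $(1-|z|^2)^\beta$ with $\beta\downarrow-(\eta+1)$ have $[\,\cdot\,]_{B_p(\eta)}$ and $[\,\cdot\,]_{A_\infty(\T)}$ both of order $(\eta+1+\beta)^{-1}$.
\end{itemize}

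Your weak-type fallback does not close the gap either. A weak $(p,p)$ bound for $M^{\T}$ on $L^p(u)$ is essentially a restatement of the $B_p$ condition on the tree, and it controls $u$-measures of level sets. The Fujii--Wilson quantity is an unweighted integral of $M_P^{\T}(u\mathbf 1_P)$. Passing from weak to strong type at the same exponent by interpolation would need an openness property you do not have.

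What is needed is the strong tree Buckley bound
\[
 \|M_P^{\T}\|_{L^q(v)\to L^q(v)}\lesssim_q[v]_{B_q(\eta)}^{1/(q-1)},
\]
used with $q=p'$ and $v=\sigma$. The paper proves it with the weighted operators $M_{P,a}h(x)=\sup_{x\in R\in\T(P)}a(R)^{-1}\int_R|h|a\,d\mueta$, for $a\in\{v,\tau\}$ with $\tau=v^{-1/(q-1)}$, and the pointwise factorization
\[
 (M_P^{\T}f)^{q-1}\le[v]_{B_q}\,M_{P,v}\bigl([M_{P,\tau}(f/\tau)]^{q-1}/v\bigr).
\]
Only these weighted operators are handled by weak $(1,1)$ plus interpolation, and they are bounded uniformly in $a$. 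With the correct exponent, your count $B^{1/p}B^{1/p'}B^{1/(p(p-1))}=B^{\theta_p}$ goes through.

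For the remaining inputs, $[u]_{B_p(\eta)}\le B$ holds with constant $1$ from $|W(z)^{1/p}e|\le\|W(z)^{1/p}W(\zeta)^{-1/p}\|\,|W(\zeta)^{1/p}e|$. For the dual side, the paper uses that $E_S$ has the same norm on $L^p(W)$ and on its dual, together with Proposition~\ref{prop:equiv-char}, to get $[V]_{\mathbf B^{\rm R}_{p'}(\eta)}\lesssim_{p,d}B^{p'/p}$.
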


\begin{proof}
We first prove the mixed estimate on an abstract nested tree. We then bound its three characteristics by the single Bergman characteristic $B$. The first part uses only nesting and scalar testing. Neither part needs the children to cover their parent. No reverse H\"older inequality is used. We may first take $\Ssp$ finite. All estimates below are uniform in this choice.

\smallskip
\noindent\emph{Reducing matrices.}
For every $Q\in\T$, finite-dimensional norm geometry gives positive reducing matrices $\mathcal W_Q$ and $\mathcal V_Q$ such that
\begin{align}
 |\mathcal W_Qe|^p
 &\asymp_{p,d}\fint_Q|W(x)^{1/p}e|^p\,d\mu(x),\notag\\
 |\mathcal V_Qe|^{p'}
 &\asymp_{p,d}\fint_Q|W(x)^{-1/p}e|^{p'}\,d\mu(x),\notag
\end{align}
and
\begin{equation}\label{eq:redproducttree}
 \|\mathcal W_Q\mathcal V_Q\|
 \lesssim_{p,d}[W]_{A_p(\T)}^{1/p}.
\end{equation}
These are one-set statements on the probability space $(Q,\mu(Q)^{-1}\mu)$; no geometric property of $Q$ is involved.

\smallskip
\noindent\emph{The local scalar embeddings.}
For fixed $Q$, set
\[
 a(x)=\mathcal V_Q^{-1}W(x)^{-1/p}f(x),
 \qquad b(y)=\mathcal W_Q^{-1}W(y)^{1/p}g(y).
\]
Since the reducing matrices are positive,
\[
 |\langle\mathcal V_Qa(x),\mathcal W_Qb(y)\rangle|
 =|\langle\mathcal W_Q\mathcal V_Qa(x),b(y)\rangle|
 \le\|\mathcal W_Q\mathcal V_Q\|\,|a(x)|\,|b(y)|.
\]
Average in both variables and use \eqref{eq:redproducttree}. We obtain
\begin{align*}
 &\fint_Q\fint_Q
 |\langle W^{-1/p}(x)f(x),W^{1/p}(y)g(y)\rangle|
 \,d\mu(x)d\mu(y)\\
 &\quad\lesssim_{p,d}[W]_{A_p(\T)}^{1/p}
 \left\langle|\mathcal V_Q^{-1}W^{-1/p}f|\right\rangle_Q
 \left\langle|\mathcal W_Q^{-1}W^{1/p}g|\right\rangle_Q.
\end{align*}
For the second factor set
\[
 S_{p'}^Wg
 =\left(\sum_{Q\in\Ssp}
 \left\langle\|\mathcal W_Q^{-1}W^{1/p}\|\,|g|\right\rangle_Q^{p'}
 \mathbf1_Q\right)^{1/p'}.
\]
Apply Lemma~\ref{lem:tree-variable-CE} with $s=p'$ and
\[
 w_Q(x)=\|\mathcal W_Q^{-1}W(x)^{1/p}\|^p\mathbf1_Q(x).
\]
The power $p$ is forced by $s'=p$, since on $Q$
$w_Q^{1/s'}=\|\mathcal W_Q^{-1}W^{1/p}\|$; this agrees with the choice in \cite[Lemma~4.1]{BenitoEtAl26}.

The reducing property implies
\begin{equation}\label{eq:wQ-normalization}
 \langle w_Q\rangle_Q\asymp_{p,d}1.
\end{equation}
Indeed, taking the adjoint and then comparing column norms gives
\begin{align*}
 \langle w_Q\rangle_Q
 &=\fint_Q\|W^{1/p}\mathcal W_Q^{-1}\|^p\,d\mu\\
 &\asymp_{p,d}\sum_{i=1}^d\fint_Q|W^{1/p}\mathcal W_Q^{-1}e_i|^p\,d\mu
 \asymp_{p,d}\sum_{i=1}^d|e_i|^p=d.
\end{align*}
Here $d$ is fixed, so the last expression is comparable to $1$ with constants depending on $p,d$.
If $Q\subset P$, then on $Q$
\begin{align*}
 w_P
 &\le \|\mathcal W_P^{-1}\mathcal W_Q\|^p w_Q
   =\|\mathcal W_Q\mathcal W_P^{-1}\|^p w_Q\\
 &\lesssim_{p,d}
 \left(\fint_Q\|W(x)^{1/p}\mathcal W_P^{-1}\|^p\,d\mu(x)\right)w_Q
 \lesssim_{p,d}\langle w_P\rangle_Qw_Q.
\end{align*}
In the second line, the reducing property is applied to each fixed vector $\mathcal W_P^{-1}e_i$. In detail,
\[
 \|\mathcal W_Q\mathcal W_P^{-1}\|^p
 \lesssim_{p,d}\sum_{i=1}^d
       \fint_Q|W^{1/p}\mathcal W_P^{-1}e_i|^p\,d\mu
 \lesssim_{p,d}\fint_Q\|W^{1/p}\mathcal W_P^{-1}\|^p\,d\mu.
\]
The equality of the two product norms follows by taking adjoints.
No change in the order of the matrices is assumed.
Together with \eqref{eq:wQ-normalization}, this is the compatibility condition of Lemma~\ref{lem:tree-variable-CE}, with a constant depending only on $p,d$.

It remains to verify testing.  Put $\beta_Q=\mu(Q)$ for $Q\in\Ssp$ and $\beta_Q=0$ otherwise, and choose pairwise disjoint sets $G_Q\subset Q$ with $\mu(G_Q)\ge\delta\mu(Q)$.  For a fixed $P$, choose an orthonormal basis $e_1,\dots,e_d$ and put $h_j=\mathcal W_P^{-1}e_j$.  Finite-dimensional norm equivalence and positivity give, on $P$,
\[
 w_P\asymp_{p,d}\sum_{j=1}^d|W^{1/p}h_j|^p.
\]
The directional $A_\infty$ characteristic is unchanged by multiplying a scalar weight by a positive constant.  Since the local maximal operator is sublinear,
\begin{align*}
 \int_P M_P^{\T}(w_P\mathbf1_P)\,d\mu
 &\lesssim_{p,d}\sum_{j=1}^d
 \int_P M_P^{\T}(|W^{1/p}h_j|^p\mathbf1_P)\,d\mu\\
 &\le [W]_{A^{\rm sc}_{p,\infty}(\T)}
       \sum_{j=1}^d\int_P|W^{1/p}h_j|^p\,d\mu\\
 &\lesssim_{p,d}[W]_{A^{\rm sc}_{p,\infty}(\T)}
       \mu(P)\langle w_P\rangle_P.
\end{align*}
Therefore
\begin{align*}
 \sum_{Q\in\Ssp(P)}\langle w_P\rangle_Q\mu(Q)
 &\le\delta^{-1}\sum_{Q\in\Ssp(P)}
      \int_{G_Q}M_P^{\T}(w_P\mathbf1_P)\,d\mu\\
 &\lesssim_{p,d,\delta}[W]_{A^{\rm sc}_{p,\infty}(\T)}
      \mu(P)\langle w_P\rangle_P.
\end{align*}
Since \eqref{eq:wQ-normalization} makes the extra factor
$\langle w_Q\rangle_Q^{p'-1}$ harmless, Lemma~\ref{lem:tree-variable-CE} yields
\begin{equation}\label{eq:SpW-bound}
 \|S_{p'}^Wg\|_{p'}
 \lesssim_{p,d,\delta}
 [W]_{A^{\rm sc}_{p,\infty}(\T)}^{1/p'}\|g\|_{p'}.
\end{equation}

For the first factor put $V=W^{-p'/p}$, so $V^{1/p'}=W^{-1/p}$, and apply the same argument with $s=p$ and
\[
 v_Q(x)=\|\mathcal V_Q^{-1}V(x)^{1/p'}\|^{p'}\mathbf1_Q(x).
\]
Define
\[
 S_p^Vf
 =\left(\sum_{Q\in\Ssp}
 \left\langle\|\mathcal V_Q^{-1}V^{1/p'}\|\,|f|\right\rangle_Q^p
 \mathbf1_Q\right)^{1/p}.
\]
Here $s'=p'$, so $v_Q^{1/s'}=\|\mathcal V_Q^{-1}W^{-1/p}\|$ on $Q$.
The same column calculation, with $W,p,\mathcal W_Q$ replaced by $V,p',\mathcal V_Q$, gives
\[
 \langle v_Q\rangle_Q\asymp_{p,d}1,
 \qquad v_P\lesssim_{p,d}\langle v_P\rangle_Qv_Q
 \quad\hbox{on }Q\subset P.
\]
Use the disjoint sets $G_Q$ and the directional characteristic of $V$, as above. Then
\[
 \sum_{Q\in\Ssp(P)}\langle v_P\rangle_Q
       \langle v_Q\rangle_Q^{p-1}\mu(Q)
 \lesssim_{p,d,\delta}[V]_{A^{\rm sc}_{p',\infty}(\T)}
       \mu(P)\langle v_P\rangle_P.
\]
These are exactly the two hypotheses of Lemma~\ref{lem:tree-variable-CE} for $s=p$.
The lemma therefore gives
\begin{equation}\label{eq:SpV-bound}
 \|S_p^Vf\|_{p}
 \lesssim_{p,d,\delta}
 [V]_{A^{\rm sc}_{p',\infty}(\T)}^{1/p}\|f\|_{p}.
\end{equation}
Apply H\"older's inequality to the sum over $Q$, with measure $\mu(Q)$ on the index set. The two resulting factors are exactly $\|S_p^Vf\|_p$ and $\|S_{p'}^Wg\|_{p'}$. Consequently
\[
 \A_{\Ssp}(W^{-1/p}f,W^{1/p}g)
 \lesssim_{p,d}[W]_{A_p(\T)}^{1/p}
 \|S_p^Vf\|_p\|S_{p'}^Wg\|_{p'},
\]
and \eqref{eq:tree-matrix-mixed} follows from
\eqref{eq:SpW-bound}--\eqref{eq:SpV-bound}. Monotone convergence extends it to countable $\Ssp$. This proves the mixed estimate of \cite[Theorem~B]{BenitoEtAl26} for the present trees.

\smallskip
\noindent\emph{The Bergman characteristic.}
Assume now that $\T=\T(\mathcal G)$ is a dyadic Bergman tent tree and
$B=[W]_{\mathbf B_p^{\rm R}(\eta)}<\infty$.  For a unit vector $e\in\mathbb C^d$ set
\[
 u_e(z)=|W(z)^{1/p}e|^p.
\]
For every Carleson square $S$,
\begin{align*}
 &\left(\fint_Su_e\,d\mueta\right)
 \left(\fint_Su_e^{-1/(p-1)}\,d\mueta\right)^{p-1}\\
 &=\fint_S
 \left(\fint_S
 \left(\frac{|W(z)^{1/p}e|}{|W(\zeta)^{1/p}e|}\right)^{p'}
 d\mueta(\zeta)\right)^{p/p'}d\mueta(z)\\
 &\le\fint_S
 \left(\fint_S
 \|W(z)^{1/p}W(\zeta)^{-1/p}\|^{p'}
 d\mueta(\zeta)\right)^{p/p'}d\mueta(z)
 \le B.
\end{align*}
Thus
\[
 [u_e]_{B_p(\eta)}\le B
\]
uniformly in $e$.

We need a scalar maximal estimate on the same tree. Fix a tent $P$, let $1<q<\infty$, and suppose $v\in B_q(\eta)$. Put $\tau=v^{-1/(q-1)}$ and $K=[v]_{B_q(\eta)}$. For $a=v$ or $a=\tau$, define
\[
 M_{P,a}h(x)=\sup_{x\in R\in\T(P)}
       \frac1{a(R)}\int_R|h|a\,d\mueta.
\]
Maximal tents in a level set are disjoint, also for the measure $a\,d\mueta$. The weak $(1,1)$ argument after Definition~\ref{def:rooted-tree} applies to this measure. Split $h$ into the parts where $|h|\le\lambda/2$ and $|h|>\lambda/2$. The first part has maximal function at most $\lambda/2$. Hence
\[
 a\{M_{P,a}h>\lambda\}
 \le\frac2\lambda\int_{P\cap\{|h|>\lambda/2\}}|h|a\,d\mueta.
\]
Multiply by $r\lambda^{r-1}$ and integrate in $\lambda$. Tonelli's theorem yields, for $1<r<\infty$,
\[
 \|M_{P,a}h\|_{L^r(a;P)}^r
 \le\frac{2^rr}{r-1}\|h\|_{L^r(a;P)}^r.
\]
In particular the bound is independent of $a$ and $P$.
For $f\ge0$, put $G=[M_{P,\tau}(f/\tau)]^{q-1}$. For every $R\in\T(P)$,
\[
 \left(\fint_Rf\,d\mueta\right)^{q-1}
 \le\left(\frac{\tau(R)}{\mueta(R)}\right)^{q-1}
        \mathop{\rm ess\,inf}_{R}G
 \le\frac{K}{v(R)}\int_RG\,d\mueta.
\]
For the second inequality we used
$v(R)\tau(R)^{q-1}\le K\mueta(R)^q$ and
$\mathop{\rm ess\,inf}_R G\le\fint_R G\,d\mueta$.
Hence $(M_P^{\T}f)^{q-1}\le K M_{P,v}(G/v)$. Taking the $L^{q'}(v)$ norm, where $q'=q/(q-1)$, gives
\[
 \|M_P^{\T}f\|_{L^q(v)}^{q-1}
 \le C_qK\|G/v\|_{L^{q'}(v)}
 =C_qK\|M_{P,\tau}(f/\tau)\|_{L^q(\tau)}^{q-1}
 \le C_qK\|f\|_{L^q(v)}^{q-1}.
\]
The last step uses $\tau^{1-q}=v$, so that
$\|f/\tau\|_{L^q(\tau)}=\|f\|_{L^q(v)}$.
Thus
\begin{equation}\label{eq:scalar-Bp-maximal}
 \|M_P^{\T}\|_{L^q(v;P)\to L^q(v;P)}
 \lesssim_q[v]_{B_q(\eta)}^{1/(q-1)}.
\end{equation}
One may first truncate the tree and then use monotone convergence. This is the tree version of the estimate in \cite[Eq.~(4.7)]{PottReguera13}.

Put $\sigma_e=u_e^{-1/(p-1)}$. Since
$[\sigma_e]_{B_{p'}(\eta)}=[u_e]_{B_p(\eta)}^{1/(p-1)}$, \eqref{eq:scalar-Bp-maximal} and H\"older's inequality give
\begin{align*}
 \int_P M_P^{\T}(u_e\mathbf1_P)\,d\mueta
 &\le\|M_P^{\T}(u_e\mathbf1_P)\|_{L^{p'}(\sigma_e;P)}
       \|\mathbf1_P\|_{L^p(u_e)}\\
 &\lesssim_p[u_e]_{B_p(\eta)}u_e(P).
\end{align*}
For the last inequality, note that $u_e^{p'}\sigma_e=u_e$. Hence
$\|u_e\mathbf1_P\|_{L^{p'}(\sigma_e)}=u_e(P)^{1/p'}$,
while $\|\mathbf1_P\|_{L^p(u_e)}=u_e(P)^{1/p}$.
Also $1/(p'-1)=p-1$, which gives the first power of $[u_e]_{B_p(\eta)}$.
Therefore
\[
 [W]_{A^{\rm sc}_{p,\infty}(\T)}\lesssim_{p,\eta}B.
\]

Apply Proposition~\ref{prop:equiv-char} to $W$ and $V=W^{-p'/p}$. The averaging operators on the two dual spaces have the same norm, so
\[
 [V]_{\mathbf B_{p'}^{\rm R}(\eta)}^{1/p'}
 \asymp_{p,d}B^{1/p}.
\]
Repeating the preceding directional scalar argument for $V$ yields
\[
 [V]_{A^{\rm sc}_{p',\infty}(\T)}
 \lesssim_{p,d,\eta}B^{p'/p}.
\]
Finally $[W]_{A_p(\T)}\le B$.  Inserting these three estimates into
\eqref{eq:tree-matrix-mixed} gives the exponent
\[
 \frac1p+\frac1{p'}+\frac1p\frac{p'}p
 =1+\frac1{p(p-1)}
 =1+\frac1{p-1}-\frac1p=\theta_p.
\]
This proves \eqref{eq:tree-matrix-bound}.
\end{proof}

\begin{remark}
The sparse theorem in \cite{BenitoEtAl26} is written for real matrix weights.  The proof above uses only finite-dimensional norm geometry, positive reducing operators and scalar absolute values, so it carries over verbatim to Hermitian positive matrices.  Alternatively one may realify: identify $\mathbb C^d$ with $\mathbb R^{2d}$ and replace $W$ by its real block representation.  For the bilinear form, write
\[
 |\langle u,v\rangle_{\mathbb C^d}|
 \le |\operatorname{Re}\langle u,v\rangle|
     +|\operatorname{Im}\langle u,v\rangle|
 =|u_{\mathbb R}\cdot v_{\mathbb R}|
  +|u_{\mathbb R}\cdot (iv)_{\mathbb R}|.
\]
Thus two applications of the real estimate suffice, changing only the dimension-dependent constant.
\end{remark}

\begin{remark}\label{rem:tree-rigor}
The weak $(1,1)$ bound follows from nesting, as shown after Definition~\ref{def:rooted-tree}. The proof also uses weights and reducing matrices for every set of the full descendant tree. The matrix characteristic is therefore imposed on that tree. In particular, the testing condition in Lemma~\ref{lem:tree-variable-CE} is checked for every $P\in\T$.
\end{remark}

\subsection{The finite-dimensional bound}\label{sec:convex-bergman}
We now connect the analytic kernel with the tree sparse form.

\begin{lemma}\label{lem:dyadic-kernel}
There exist finitely many adjacent dyadic grids of boundary arcs $\mathcal G_1,\dots,\mathcal G_N$, depending only on the disc geometry, such that
\begin{equation}\label{eq:kernel-dyadic}
 \frac1{|1-\overline\zeta z|^{\eta+2}}
 \lesssim_\eta
 \sum_{j=1}^N\sum_{Q\in\T(\mathcal G_j)}
 \frac{\mathbf1_Q(z)\mathbf1_Q(\zeta)}{\mueta(Q)}.
\end{equation}
\end{lemma}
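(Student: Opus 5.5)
We use the three rotated grids from Section~\ref{sec:separated}, so $N=3$, together with the covering fact proved there: every arc $J$ lies in an arc $I$ from one of the three grids with $|I|\le6|J|$. The plan is to show that for each pair $(z,\zeta)$ there is a single tent $Q$ from one of the trees that contains both points and satisfies $\mueta(Q)\lesssim_\eta|1-\overline\zeta z|^{\eta+2}$. The corresponding term on the right of \eqref{eq:kernel-dyadic} is then at least $\mueta(Q)^{-1}\gtrsim_\eta|1-\overline\zeta z|^{-\eta-2}$. All other terms are nonnegative, so the inequality follows pointwise, away from the null set of angular boundaries.

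Fix $z=re^{2\pi ix}$ and $\zeta=\rho e^{2\pi iy}$, and put $\delta=|1-\overline\zeta z|$. We need three elementary geometric facts, all already used in the proof of Theorem~\ref{thm:separated-factorization}. First, $1-|z|\le 1-|z||\zeta|\le\delta$, and likewise $1-|\zeta|\le\delta$. Second, when $r\rho\ge1/4$, the circular distance satisfies $d(x,y)\le\delta/2$; this is \eqref{eq:depth-h}. Third, when $r\rho<1/4$, we have $\delta\ge3/4$, so $\delta$ is bounded below by an absolute constant.

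In the case $\delta\ge c_0$ for a small absolute constant $c_0$, take $Q=\D$, the root of the first tree. Then $\mueta(Q)=1\lesssim\delta^{\eta+2}$, and both points lie in $Q$.

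In the remaining case, $\delta<c_0$ with $c_0\le 3/4$, so the second fact applies. Let $J$ be the shortest arc containing the angles $x$ and $y$, enlarged so that $|J|=\delta$; its length is at least $d(x,y)$ and at most $\delta$. Choose $I$ from one of the three grids with $J\subset I$ and $|I|\le6|J|$, and we may also arrange $|I|\ge\delta$. Then $S(I)$ contains both points: their angles lie in $I$, and their depths satisfy $1-|z|,1-|\zeta|\le\delta\le|I|$. If the strict inequality $1-|z|<|I|$ is needed at equality, enlarge $J$ slightly beforehand, say to length $2\delta$. Finally, the measure formula of Lemma~\ref{lem:full-tree-sparse} gives $\mueta(S(I))\le 2^{\alpha-1}|I|^\alpha\lesssim_\eta\delta^{\alpha}$ with $\alpha=\eta+2$.

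The only delicate points are bookkeeping. The covering fact gives the full circle once $|J|\ge1/6$, and that is consistent with the first case. Normalizations such as arc length $1$ versus angle $2\pi x$ have to be tracked, but they affect only absolute constants.
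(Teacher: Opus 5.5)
Your proof is correct and follows essentially the same route as the paper: the three rotated grids, the root when $|1-\overline\zeta z|$ is bounded below, and otherwise a single tent of height comparable to $|1-\overline\zeta z|$ that contains both points and is bounded by the exact tent-measure formula. The only difference is presentation: the paper re-derives the angular and covering estimates with explicit constants (threshold $1/32$, an arc of length $4t$, and a dyadic height $12t<H\le 24t$), whereas you cite the covering fact and \eqref{eq:depth-h} from Section~\ref{sec:separated}.
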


\begin{proof}
Use the three rotated grids from Section~\ref{sec:separated}. Put $\alpha=\eta+2$ and $t=|1-\overline\zeta z|$. If $t\ge1/32$, then $t^{-\alpha}\le32^\alpha$. The root $\D$ has measure $1$, so its term suffices.

Suppose $t<1/32$. Write $z=re^{2\pi ix}$ and $\zeta=\rho e^{2\pi iy}$. The two depths satisfy
\[
 1-r\le t,\qquad 1-\rho\le t,
\]
so $r,\rho>31/32$. Let $\ell$ be the shortest distance between $x$ and $y$ in $\mathbb R/\mathbb Z$. Then $0\le\ell\le1/2$ and
\[
 t^2=(1-r\rho)^2+r\rho|1-e^{2\pi i(x-y)}|^2,
 \qquad |1-e^{2\pi i(x-y)}|=2\sin(\pi\ell)\ge4\ell.
\]
It follows that $\ell\le t/(4\sqrt{r\rho})<t$. Choose an arc $J$ of length $4t$ with $x,y$ in its interior. Choose a dyadic length $H$ such that
\[
 12t<H\le24t.
\]
Such a length exists and is at most $1/2$. At this scale, the combined endpoints of the three grids are spaced by $H/3$. Since $|J|<H/3$, at least one grid has an arc $I$ of length $H$ containing $J$. The depth estimates give $z,\zeta\in Q:=S(I)$.

The tent measure formula in Lemma~\ref{lem:full-tree-sparse} gives
\[
 \mueta(Q)=H^\alpha(2-H)^{\alpha-1}
 \le2^{\alpha-1}24^\alpha t^\alpha.
\]
Thus $t^{-\alpha}\lesssim_\eta\mueta(Q)^{-1}$. This single term proves \eqref{eq:kernel-dyadic}. This is the dyadic kernel comparison used in scalar B\'ekoll\'e estimates; compare \cite{PottReguera13}.
\end{proof}

The following bilinear estimate is the analytic estimate needed for the finite-dimensional theorem. Here $\langle P_\eta f,g\rangle$ denotes the integral of the pointwise Hilbert-space pairing.

\begin{proposition}\label{prop:convex-bergman}
Let $\mathcal K$ be a Hilbert space, finite- or infinite-dimensional.  For compactly supported Bochner integrable $\mathcal K$-valued functions $f,g$,
\begin{equation}\label{eq:vector-dual-sparse}
 |\langle P_\eta f,g\rangle|
 \lesssim_\eta\sum_{j=1}^N
 \A_{\T(\mathcal G_j)}(f,g),
\end{equation}
where
\[
 \A_{\T}(f,g)
 =\sum_{Q\in\T}\mueta(Q)
   \fint_Q\fint_Q |\ip{f(\zeta)}{g(z)}|
   \,d\mueta(\zeta)d\mueta(z).
\]
The constant is independent of $\dim\mathcal K$.
\end{proposition}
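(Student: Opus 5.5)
The plan is to write the pairing as a double integral and bound it by the absolute kernel. Then Lemma~\ref{lem:dyadic-kernel} turns the absolute kernel into the tree sums. Since $f$ and $g$ are compactly supported and Bochner integrable, the scalar kernel is bounded on $\operatorname{supp}f\times\operatorname{supp}g$. Hence $P_\eta f(z)$ is a Bochner integral for each $z$, and
\[
 \langle P_\eta f,g\rangle
 =\int_\D\int_\D\frac{\langle f(\zeta),g(z)\rangle}{(1-\overline\zeta z)^{\eta+2}}\,d\mueta(\zeta)\,d\mueta(z).
\]
The interchange of the vector integral and the inner product with $g(z)$ is justified because $h\mapsto\langle h,g(z)\rangle$ is a bounded functional. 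Fubini is legitimate because the integrand is absolutely integrable. Indeed, on compact supports $|1-\overline\zeta z|^{-\eta-2}\le(1-r)^{-\eta-2}$, where $r$ bounds the moduli on the supports. So the absolute integrand is at most a constant times $\|f(\zeta)\|\,\|g(z)\|$, which is integrable.

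Next, pass to absolute values and apply \eqref{eq:kernel-dyadic}:
\[
 |\langle P_\eta f,g\rangle|
 \le\int_\D\int_\D\frac{|\langle f(\zeta),g(z)\rangle|}{|1-\overline\zeta z|^{\eta+2}}
 \lesssim_\eta\sum_{j=1}^N\sum_{Q\in\T(\mathcal G_j)}\frac1{\mueta(Q)}\int_Q\int_Q|\langle f(\zeta),g(z)\rangle|\,d\mueta(\zeta)\,d\mueta(z).
\]
The interchange of the sum over tents with the double integral is by Tonelli, since every term is nonnegative. Finally, rewrite each summand using
\[
 \frac1{\mueta(Q)}\int_Q\int_Q=\mueta(Q)\fint_Q\fint_Q.
\]
This is exactly $\A_{\T(\mathcal G_j)}(f,g)$.

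The implicit constant comes only from Lemma~\ref{lem:dyadic-kernel}, which depends on $\eta$. The pairing $|\langle f(\zeta),g(z)\rangle|$ is a scalar, so no dimension enters anywhere.

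The only mildly delicate step is the first one: the vector-valued Fubini and the identification of the pairing. The compact support of $f$ and $g$ handles this, since it makes the kernel bounded and the double integral absolutely convergent. The rest is a direct consequence of Lemma~\ref{lem:dyadic-kernel}.
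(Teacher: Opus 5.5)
Your proposal is correct and matches the paper's proof: compact support gives a bounded kernel and absolute convergence to justify Fubini, then Lemma~\ref{lem:dyadic-kernel} is applied pointwise and Tonelli exchanges the tent sums with the double integral. The only cosmetic difference is that the paper bounds the kernel using the support of $f$ alone, whereas you use both supports.
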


\begin{proof}
Choose $r<1$ with $\operatorname{supp}f\subset\{|\zeta|\le r\}$. For every $z\in\D$ and $\zeta$ in this support,
$|1-\overline\zeta z|\ge1-r$. The absolute double integral below is therefore bounded by
$(1-r)^{-\eta-2}\|f\|_1\|g\|_1$. Fubini's theorem applies and gives
\begin{align*}
 |\langle P_\eta f,g\rangle|
 &\le \int_\D\int_\D
 \frac{|\ip{f(\zeta)}{g(z)}|}
 {|1-\overline\zeta z|^{\eta+2}}
 \,d\mueta(\zeta)d\mueta(z).
\end{align*}
Apply Lemma~\ref{lem:dyadic-kernel} pointwise to the scalar kernel.  Since all summands are nonnegative, Tonelli's theorem gives
\begin{align*}
 |\langle P_\eta f,g\rangle|
 &\lesssim_\eta
 \sum_{j=1}^N\sum_{Q\in\T(\mathcal G_j)}
 \frac1{\mueta(Q)}
 \int_Q\int_Q |\ip{f(\zeta)}{g(z)}|
 \,d\mueta(\zeta)d\mueta(z)\\
 &=\sum_{j=1}^N
 \A_{\T(\mathcal G_j)}(f,g).
\end{align*}
No finite-dimensional convexity is used.  In particular, \eqref{eq:vector-dual-sparse} is valid for arbitrary Hilbert-space-valued functions.  Finite dimensionality enters only in the weighted estimate for the sparse form in Theorem~\ref{thm:tree-matrix}.
\end{proof}

\begin{remark}\label{rem:hilbert-sparse}
Proposition~\ref{prop:convex-bergman} gives an absolute vector sparse form. Its weighted estimate here uses finite-dimensional reducing matrices. Theorem~\ref{thm:operator-main} instead uses the separated scalar kernel and Lemma~\ref{lem:layer-tree}, whose bound has no dimension parameter.
\end{remark}

We now prove the quantitative estimate in Theorem~\ref{thm:matrix-main}.

\begin{proof}[\textbf{Proof of Theorem~\ref{thm:matrix-main}}]

The implication (i)$\Rightarrow$(ii), together with \eqref{eq:main-lower}, is Proposition~\ref{prop:matrix-necessity}.  The equivalence (ii)$\Leftrightarrow$(iii) is Proposition~\ref{prop:equiv-char}.

It remains to prove (iii)$\Rightarrow$(i).  Let $F,G$ be bounded, compactly supported simple $\mathbb C^d$-valued functions. The standing integrability assumptions make $W^{-1/p}F$ and $W^{1/p}G$ Bochner integrable. These test classes are dense in unweighted $L^p$ and $L^{p'}$, respectively.  By Proposition~\ref{prop:convex-bergman},
\begin{align*}
 &|\langle W^{1/p}P_\eta(W^{-1/p}F),G\rangle|\\
 &\qquad=|\langle P_\eta(W^{-1/p}F),W^{1/p}G\rangle|\\
 &\qquad\lesssim_\eta
 \sum_{j=1}^N
 \A_{\T(\mathcal G_j)}(W^{-1/p}F,W^{1/p}G).
\end{align*}
Each full dyadic Carleson tree $\T(\mathcal G_j)$ is sparse by \eqref{eq:child-ratio}.  Its tree matrix characteristic is bounded by the full Carleson characteristic $[W]_{\mathbf B_p^{\rm R}(\eta)}$.  Theorem~\ref{thm:tree-matrix} therefore yields
\[
 |\langle W^{1/p}P_\eta(W^{-1/p}F),G\rangle|
 \lesssim_{p,d,\eta}
 [W]_{\mathbf B_p^{\rm R}(\eta)}^{\theta_p}
 \|F\|_p\|G\|_{p'}.
\]
We justify the final duality step before taking the supremum. Put $U=W^{1/p}$ and $h_F=UP_\eta(U^{-1}F)$. Finite dimensionality and \eqref{eq:standing} give
\[
 \int_\D\|U(z)\|^p\,d\mueta(z)
 \lesssim_{p,d}\sum_{i=1}^d\int_\D|U(z)e_i|^p\,d\mueta(z)<\infty.
\]
If $\operatorname{supp}F\subset\{|z|\le r\}$ with $r<1$, then
\[
 |P_\eta(U^{-1}F)(z)|
 \le(1-r)^{-\eta-2}\|U^{-1}F\|_1
 \qquad(z\in\D).
\]
Hence $h_F\in L^p$. This observation gives membership only; the uniform estimate comes from the sparse bound above.

Now take the supremum over bounded compactly supported simple $G$ with $\|G\|_{p'}\le1$. This class is dense in $L^{p'}$, so its supremum is $\|h_F\|_p$. We obtain
\[
 \|h_F\|_p
 \lesssim_{p,d,\eta}[W]_{\mathbf B_p^{\rm R}(\eta)}^{\theta_p}\|F\|_p.
\]
The bounded map $F\mapsto h_F$ extends uniquely from the dense class of compactly supported simple functions to all of $L^p$. Thus
\[
 M_{W^{1/p}}P_\eta M_{W^{-1/p}}
 :L^p(d\mueta;\mathbb C^d)\to L^p(d\mueta;\mathbb C^d)
\]
denotes a bounded extension with the stated norm. Multiplication by $W^{1/p}$ is an isometry from $L^p(W)$ onto $L^p$. Its inverse is multiplication by $W^{-1/p}$. These isometries transfer the extension to $L^p(W)$ and give \eqref{eq:main-upper}.
\end{proof}

\begin{remark}\label{rem:p2}
For $p=2$, the averaging characteristic is the square of the norm-type product in \eqref{eq:B2op-intro}. Theorem~\ref{thm:matrix-main} gives the power $3/2$ with a dimension-dependent constant. Theorem~\ref{thm:hilbert-bound} removes that dependence by a separate argument. The sparse estimate underlying this step is due to Limani--Pott; the present application uses the separated scalar kernel.
\end{remark}

\begin{remark}
The power $\theta_p$ is inherited from the general matrix sparse-form bound \cite{BenitoEtAl26}. The present proof gives no matching lower bound for the Bergman projection. Thus no sharpness statement for \eqref{eq:main-upper} is made.
\end{remark}

\section*{Statements and declarations}
\noindent\textbf{Competing interests.}
The authors declare that they have no competing interests.

\smallskip
\noindent\textbf{Data availability.}
No data sets were generated or analysed in this study.


\begingroup
\small
\begin{thebibliography}{99}
\raggedright

\bibitem{AlemanConstantin12}
A. Aleman and O. Constantin,
\emph{The Bergman projection on vector-valued $L^2$-spaces with operator-valued weights},
J. Funct. Anal. \textbf{262} (2012), 2359--2378.

\bibitem{AlemanPottReguera19}
A. Aleman, S. Pott, and M. C. Reguera,
\emph{Characterizations of a limiting class $B_\infty$ of B\'ekoll\'e--Bonami weights},
Rev. Mat. Iberoam. \textbf{35} (2019), no.~6, 1677--1692.

\bibitem{Bekolle82}
D. B\'ekoll\'e,
\emph{In\'egalit\'e \`a poids pour le projecteur de Bergman dans la boule unit\'e de $\mathbb C^n$},
Studia Math. \textbf{71} (1981/82), 305--323.

\bibitem{BB78}
D. B\'ekoll\'e and A. Bonami,
\emph{In\'egalit\'es \`a poids pour le noyau de Bergman},
C. R. Acad. Sci. Paris S\'er. A--B \textbf{286} (1978), A775--A778.

\bibitem{BenitoEtAl26}
F. Benito-de la Cigo\~na, T. Borges, F. D'Emilio, M. Pasquariello, and N. A. Wagner,
\emph{Matrix weighted $L^p$ estimates in the nonhomogeneous setting},
Adv. Math. \textbf{492} (2026), 110911.

\bibitem{ChenWangFock25}
J. Chen and M. Wang,
\emph{Fock projections on vector-valued $L^p$-spaces with matrix weights},
arXiv:2408.13537v2, 2025.

\bibitem{Goldberg03}
M. Goldberg,
\emph{Matrix $A_p$ weights via maximal functions},
Pacific J. Math. \textbf{211} (2003), no.~2, 201--220.

\bibitem{HLYY26}
T. P. Hyt\"onen, Y. Li, D. Yang, and W. Yuan,
\emph{Real-variable theory of function spaces with operator-valued $A_p$ weights in Banach spaces},
arXiv:2604.18303, 2026.

\bibitem{HuoWagnerWick21}
Z. Huo, N. A. Wagner, and B. D. Wick,
\emph{B\'ekoll\'e--Bonami estimates on some pseudoconvex domains},
Bull. Sci. Math. \textbf{170} (2021), 102993.

\bibitem{HuoWick24}
Z. Huo and B. D. Wick,
\emph{Weighted estimates of the Bergman projection with matrix weights},
in: A. A. Condori, E. Pozzi, W. T. Ross, and A. A. Sola (eds.),
\emph{Recent Progress in Function Theory and Operator Theory},
Contemp. Math. \textbf{799}, Amer. Math. Soc., 2024, 53--73.

\bibitem{LauzonTreil07}
M. Lauzon and S. Treil,
\emph{Scalar and vector Muckenhoupt weights},
Indiana Univ. Math. J. \textbf{56} (2007), no.~4, 1989--2015.

\bibitem{LimaniPott21}
A. Limani and S. Pott,
\emph{Sparse Lerner operators in infinite dimensions},
arXiv:2103.17005v4, 2022.

\bibitem{NPTV17}
F. Nazarov, S. Petermichl, S. Treil, and A. Volberg,
\emph{Convex body domination and weighted estimates with matrix weights},
Adv. Math. \textbf{318} (2017), 279--306.

\bibitem{PottReguera13}
S. Pott and M. C. Reguera,
\emph{Sharp B\'ekoll\'e estimates for the Bergman projection},
J. Funct. Anal. \textbf{265} (2013), 3233--3244.

\bibitem{Tian26}
C. Tian,
\emph{Weighted Bergman projections induced by kernel with integral representation in operator-valued setting},
Integral Equations Operator Theory \textbf{98} (2026), no.~2, Paper No.~11.

\bibitem{Volberg97}
A. Volberg,
\emph{Matrix $A_p$ weights via $S$-functions},
J. Amer. Math. Soc. \textbf{10} (1997), no.~2, 445--466.

\end{thebibliography}
\endgroup
\end{document}